\documentclass[11pt, twoside]{article}

\usepackage[english]{babel}

\usepackage{bbm}
\usepackage{amssymb}
\usepackage{amsfonts}
\usepackage{amsmath}
\usepackage{amsthm}
\usepackage{color}
\usepackage{mathrsfs}
\usepackage{txfonts}
\usepackage{bbm}
\usepackage{enumerate}
\usepackage{anysize}
\usepackage{indentfirst}

\usepackage{latexsym}

\usepackage[colorlinks=true,
linkcolor=blue,
citecolor=red,
urlcolor=magenta,
backref=page
]{hyperref}

\allowdisplaybreaks

\newtheorem{theorem}{Theorem}[section]
\newtheorem{lemma}[theorem]{Lemma}
\newtheorem{corollary}[theorem]{Corollary}
\newtheorem{proposition}[theorem]{Proposition}
\newtheorem{example}[theorem]{Example}
\theoremstyle{definition}
\newtheorem{remark}[theorem]{Remark}
\newtheorem{definition}[theorem]{Definition}

\newcounter{assum}

\renewcommand{\appendix}{\par
\setcounter{section}{0}%
\setcounter{subsection}{0}%
\setcounter{subsubsection}{0}%
\gdef\thesection{\@Alph\c@section}%
\gdef\thesubsection{\@Alph\c@section.\@arabic\c@subsection}%
\gdef\theHsection{\@Alph\c@section.}%
\gdef\theHsubsection{\@Alph\c@section.\@arabic\c@subsection}%
\csname appendixmore\endcsname
}

\numberwithin{equation}{section}

\def\XXint#1#2#3{{\setbox0=\hbox{$#1{#2#3}{\int}$ }
\vcenter{\hbox{$#2#3$ }}\kern-.6\wd0}}

\def\XXint#1#2#3{{\setbox0=\hbox{$#1{#2#3}{\int}$ }
\vcenter{\hbox{$#2#3$ }}\kern-.55\wd0}}

\begin{document}

\arraycolsep=1pt

\title{\bf\Large Hardy--Littlewood Maximal Operator
and Two-Layer Muckenhoupt Weights
on Infinite Rooted $k$-Ary Trees
\footnotetext{ \hspace{-0.35cm} 2020 \emph{Mathematics Subject Classification}.
Primary 42B25; Secondary 05C05, 46E30, 47A30, 46E36.
\endgraf \emph{Key words and phrases}.  infinite rooted $k$-ary tree,
Hardy--Littlewood maximal operator,
two-layer Muckenhoupt class, weighted norm inequality.
\endgraf
This project is partially supported by the National Natural
Science Foundation of China (Grant
Nos. 12371093 and 12431006), the Beijing Natural
Science Foundation (Grant No. 1262011), and
the Fundamental Research Funds for the Central Universities
(Grant No. 2253200028).}}
\author{Dachun Yang, Wen Yuan and Mingdong Zhang}
\date{\today}
\maketitle

\vspace{-0.6cm}

\begin{center}
\begin{minipage}{13cm}
{\small {\bf Abstract}\quad
Let $k\geq 2$ be an integer,  $T$  an infinite rooted $k$-ary tree,
and  $M$ the Hardy--Littlewood maximal operator on $T$.
For any $p\in(0,\infty)$, we characterize the weight
$w$ such that $M$ is bounded on $L^p(w)$. To this end, we introduce
a two-layer Muckenhoupt weight class $\mathscr A_p$ and
prove that, for any $p\in(\frac{1}{2},\infty)$,
the boundedness of $M$ on $L^p(w)$, $w\in\mathscr A_p$,
and the exponential decay boundedness of
spherical averaging operators on $L^p(w)$ are mutually equivalent,
and that, when $p\in(0,\frac{1}{2}]$, there exists no weight $w$
such that $M$ is bounded on $L^p(w)$.
Moreover, for any $p\in(\frac{1}{2},\infty)$,
we establish the quantitative estimate, with the optimal exponent
$\frac{1}{p}$ of the weight constant, for
the boundedness of $M$ on $L^p(w)$.
For any $p\in(1,\infty)$, we also obtain two further
equivalent characterizations of the boundedness of $M$ on
$L^p(w)$, respectively, in terms of a global Sawyer-type
testing condition and an estimate for
the weighted product measure of distance incidence sets.
As applications, for any $p\in(\frac{1}{2},\infty)$,
under the assumption that $M$ is bounded on $L^p(w)$,
we establish the boundedness of exponentially decaying
kernel operators on $L^p(w)$ and weighted Fefferman--Stein
vector-valued inequalities.}
\end{minipage}
\end{center}

%\vspace{0.2cm}

\tableofcontents

%\vspace{0.2cm}

\section{Introduction}\label{sec:intro}

The Hardy--Littlewood maximal operator $M_{\mathbb{R}^n}$
on $\mathbb{R}^n$ is one of the fundamental objects in harmonic analysis and
plays an important role in related fields
(see, for instance, \cite[Chapter 2]{duo01} and \cite[Chapter II]{ste93}).
It is well known that $M_{\mathbb R^n}$ is of strong type
$(p, p)$ for any $p\in(1,\infty)$
and is of weak type $(1, 1)$. In \cite{muckenhoupt72},
Muckenhoupt introduced the classical $A_p(\mathbb R^n)$ weights
on $\mathbb{R}^n$ and proved that, for any $p\in(1,\infty)$,
$w\in A_p(\mathbb R^n)$ if and only if
$M_{\mathbb{R}^n}$ is bounded on the weighted Lebesgue space
$L^p(w)$ on $\mathbb{R}^n$.
Since this seminal work of Muckenhoupt, maximal operators and their
(weighted) inequalities have been studied in a wide variety of
metric measure spaces (see, for example,
\cite{am84,kairema13} for spaces of homogeneous type,
\cite{gm01,kosz18a,kosz18b,op02} for non-doubling spaces,
and \cite{bm12,st16,st19} for discrete graph settings).

It is well known that trees serve
as a fundamental and prototypical class of graphs.
Recall that a graph is called a \emph{tree}
if it is connected and contains no cycles.
Suppose that $k\geq 2$ is an integer.
A tree $T$ is called an \emph{infinite rooted $k$-ary tree}
if $T$ has a root vertex and every vertex of $T$ has exactly $k$ children.
Let $T$ be the infinite rooted $k$-ary tree, and let
$M$ be the Hardy--Littlewood maximal operator on $T$ in \eqref{eq-def-M}.
The main purpose of this article is to characterize the weight $w$
such that $M$ is bounded on the weighted Lebesgue space $L^p(w)$
on $T$ in \eqref{eq-def-weightedLp} for any given $p\in(0,\infty)$.

We first present a brief history on the study of $M$ on trees.
Recall that the \emph{degree} of a vertex of a tree is the number of
its neighbors. A tree is said to be \emph{homogeneous} if
every vertex has the same degree.
The study of the boundedness of $M$ on trees can be traced back to
the work of Rochberg and Taibleson \cite{rt91},
in which they established the weak type $(1,1)$
estimate for the Green operator associated with random walks on
trees of bounded degree. When specialized to the isotropic
random walk on homogeneous trees, their estimate implies that
$M$ is of weak type $(1,1)$.
In 2010, Cowling et al. \cite[Corollary 3.2]{cms10} gave a direct and geometric proof
of the weak type $(1,1)$ estimate for $M$ on homogeneous trees.
Independently, by means of a combinatorial expansion argument,
Naor and Tao \cite[Theorem 1.5]{nt10} proved that
$M$ is of weak type $(1,1)$ on infinite rooted $k$-ary trees, which, together with
the trivial $L^\infty$ estimate and the Marcinkiewicz interpolation theorem,
yields that $M$ is of strong type $(p,p)$ for any $p\in(1,\infty)$.
The combinatorial ideas of Naor and Tao \cite{nt10}
were subsequently adapted to the study of maximal inequalities
in several related settings; see, for example, \cite{st19} for infinite graphs,
\cite{ao25,cms26} for hyperbolic spaces, \cite{gr23,ls24} for fractional maximal
operators, and \cite{gr23,ors21,or22} for weighted variants.
For more developments concerning the boundedness of different maximal operators on
trees of bounded degree, we refer to \cite{lmsv25,ms24}.
We also refer to \cite{lmstv25,lstv23a,lstv23b,mstv25,osv25}
for various developments in harmonic
analysis on trees equipped with flow measures, including
the Calder\'on--Zygmund theory, Hardy and ${\rm BMO}$ spaces, and
the boundedness results for Riesz transforms and spectral multipliers
associated with the flow Laplacian.

We now turn to weighted estimates for $M$ on the infinite rooted $k$-ary tree
$T$, where $k\geq 2$ is an integer. To this end,
we first present some notation and basic properties of $T$.
In what follows, if no confusion can arise, we always
identify the tree $T$ with its \emph{vertex set}.
Let $d$ be the standard tree distance on $T$, i.e.,
for any $x,y\in T$, $d(x,y)$ is the number of edges
of the unique path joining $x$ and $y$. For any subset $A\subset T$,
let the \emph{notation $|A|$} denote the counting measure of $A$.
In this article, we always consider the
metric measure space $(T,d,|\cdot|)$.
Let $\mathbb{Z}_+:=\mathbb{N}\cup\{0\}$,
where $\mathbb{N}$ denotes the set of all positive integers.
For any $x\in T$ and $r\in\mathbb{Z}_+$, let
\begin{align*}
B(x,r):=\left\{y\in T:\,d(x,y)\leq r\right\}
\end{align*}
and
\begin{align*}
S(x,r):=\left\{y\in T:\,d(x,y)=r\right\}
\end{align*}
be respectively the \emph{ball} and the \emph{sphere} with center $x$ and radius $r$.
It is well known that, for any $x\in T$ and $r\in\mathbb{Z}_+$,
\begin{align}\label{eq-sphere-ball-growth}
|S(x,r)|\sim k^r\sim|B(x,r)|,
\end{align}
where the positive equivalence constants are independent of $k$, $x$, and $r$.
Note that the exponential growth in \eqref{eq-sphere-ball-growth}
shows that the counting measure on $T$ is not doubling
and even not upper doubling introduced
in \cite[Definition 2.6]{hytonen10}.
This prevents a direct application of the methods developed
in the study of weighted norm inequalities
on graphs as in \cite{bm12,bd16}, where the underlying
measure is assumed to be doubling.
The \emph{(centered) Hardy--Littlewood maximal operator} $M$ on $T$
is defined by setting, for any function $f$ on $T$
and for any $x\in T$,
\begin{align}\label{eq-def-M}
Mf(x):=\sup_{r\in\mathbb{Z}_+}A_rf(x)
:=\sup_{r\in\mathbb{Z}_+}\frac{1}{|B(x,r)|}
\sum_{y\in B(x,r)}|f(y)|.
\end{align}
Unlike in the Euclidean setting,
we do not need to assume that $f$ is locally integrable
in \eqref{eq-def-M} because
every ball in $T$ contains only finitely
many points and hence every function on $T$ is integrable on each ball.
We say that $w$ is a \emph{weight} on $T$ if $w$ is a function on $T$ taking values
in $(0,\infty)$. Recall that, for any $p\in(0,\infty)$
and any weight $w$ on $T$,
the \emph{weighted Lebesgue space $L^p(w)$ on $T$}
is defined to be the set of all functions $f$ on $T$ such that
\begin{align}\label{eq-def-weightedLp}
\|f\|_{L^p(w)}:=\left[\sum_{x\in T}
|f(x)|^p w(x)\right]^{\frac{1}{p}}<\infty.
\end{align}
If $w\equiv1$, we simply denote $L^p(w)$ by $L^p(T)$.
Recently, Ombrosi and Rivera-R\'{\i}os \cite{or22}
gave some sufficient conditions for the boundedness
of $M$ on $L^p(w)$ when $p\in(1,\infty)$. Moreover,
they found that, different from the Euclidean case,
the classical ball $A_p$ condition cannot characterize this boundedness.
To be precise, Ombrosi and Rivera-R\'{\i}os in
\cite[Example 2 in Theorem 1.3]{or22} constructed a weight
$w$ on $T$ such that $M$ is bounded on $L^q(w)$ for any
$q\in(1,\infty)$, but $w$ does not satisfy the classical ball
$A_p$ condition for any $p\in (1,\infty)$.
Thus, it is a \emph{natural question} to find those weights
$w$ on $T$ such that they can exactly
characterize the boundedness of $M$ on $L^p(w)$.

The main target of this article is to give an affirmative answer
to this question. To this end, we introduce the two-layer Muckenhoupt weights.
Different from the classical ball $A_p$ condition
in $\mathbb{R}^n$, this new condition is formulated in terms of
two descendant layers rather than a single ball and
contains an additional exponential decay factor.
To present its definition, we need the following notation.
In what follows, we use the symbol $o$ to denote the \emph{root} of $T$.
For any $x\in T$ and $m\in\mathbb Z_+$,
let $|x|:=d(o,x)$ and define
the \emph{$m$-th descendant layer $D_m(x)$
of $x$}  by setting
\begin{align*}
D_m(x):=\left\{y\in T:\ x\text{ lies on the unique path connecting }o\text{ and }y,
\ |y|=|x|+m\right\}.
\end{align*}
Since every vertex of $T$ has exactly $k$ children,
it follows that $|D_m(x)|=k^m$.
For any weight $w$ on $T$ and any subset $E\subset T$, let
$$
w(E):=\sum_{x\in E}w(x).
$$

We now present the definition of the two-layer
Muckenhoupt weights. In what follows,
for any $\alpha\in (0,\infty)$ and $y\in T$
we always let $w^{-\alpha}(y):=\frac{1}{w^\alpha(y)}$.

\begin{definition}\label{def:exponential-Ap}
For any $p,\varepsilon\in(0,\infty)$,
a weight $w$ is called a \emph{two-layer Muckenhoupt weight}, denoted by $w\in\mathscr{A}_{p,\varepsilon}$, if
\begin{align}\label{eq-two-layer-characterization}
[w]_{\mathscr{A}_{p,\varepsilon}}:=
\begin{cases}
\displaystyle
\sup_{\genfrac{}{}{0pt}{}{x\in T}{j,m\in\mathbb Z_+}}
\frac{w(D_j(x))[w^{-\frac{1}{p-1}}(D_m(x))
]^{p-1}}{k^{(p-\varepsilon)(j+m)}}
&\text{if }p\in(1,\infty),\\
\displaystyle
\sup_{\genfrac{}{}{0pt}{}{x\in T}{j,m\in\mathbb Z_+}}
\frac{\displaystyle w(D_j(x))\max_{y\in D_m(x)}w^{-1}(y)}{k^{(p-\varepsilon)(j+m)}}
&\text{if }p\in(0,1]
\end{cases}
\end{align}
is finite. We call
\begin{align*}
\mathscr{A}_p:=
\bigcup_{\varepsilon\in(0,\infty)}\mathscr{A}_{p,\varepsilon}
\end{align*}
the \emph{two-layer Muckenhoupt class
of index $p$} on $T$.
\end{definition}

\begin{remark}
\begin{itemize}
\item[{\rm (i)}] The following observations may illustrate
some reasonability of Definition \ref{def:exponential-Ap}.
Let $p\in(1,\infty)$.
\begin{itemize}
\item[$\mathrm{(i)}_{\mathrm{a}}$]
Recall that a non-negative locally integrable
function on $\mathbb{R}^n$ is called a \emph{weight} if it
takes values in $(0, \infty)$ almost everywhere
(see, for example, \cite[p.\,499]{g14c}).
For $x\in\mathbb R^n$ and $r\in(0,\infty)$, we also use
$B(x,r)$ to denote the Euclidean ball centered at $x$ with
radius $r$ and $|B(x,r)|$ to denote its Lebesgue measure.
If $w$ is a weight on $T$ or on $\mathbb{R}^n$,
let $\sigma:=w^{-\frac{1}{p-1}}$.
Recall that a weight $w$ on $\mathbb R^n$ is said to satisfy
the \emph{ball $A_p$ condition} if,
for any $x\in\mathbb R^n$ and $r\in(0,\infty)$,
\begin{align}\label{eq-Euclidean-one-ball}
\frac{1}{|B(x,r)|}\int_{B(x,r)}w(y)\,dy
\left[
\frac{1}{|B(x,r)|}
\int_{B(x,r)}\sigma(y)\,dy
\right]^{p-1}
\lesssim 1,
\end{align}
where the implicit positive constant is independent of $x$ and $r$
(see, for instance, \cite[Definition 7.1.3 and Remark 7.1.4]{g14c}).
Note that an analogue of \eqref{eq-Euclidean-one-ball} on $T$
seems the following one: for any $x\in T$ and $j\in\mathbb{Z}_+$,
\begin{align}\label{eq-single-layer-Ap}
\frac1{|D_j(x)|}\sum_{y\in D_j(x)}w(y)
\left[\frac1{|D_j(x)|}\sum_{y\in D_j(x)}\sigma(y)
\right]^{p-1}\lesssim 1,
\end{align}
where the implicit positive constant is independent of $x$ and $j$.
However, condition \eqref{eq-single-layer-Ap}
cannot be used to characterize the boundedness of $M$ on $L^p(w)$.
Indeed, for the exponential radial weights $w_\beta$ in
\eqref{eq-def-radial-beta}, the left-hand side of
\eqref{eq-single-layer-Ap} is equal to $1$ for all
$\beta\in\mathbb R$. But, by
Corollary \ref{cor:exponential-radial}, $M$ is bounded on
$L^p(w_\beta)$ if and only if $\beta\in (-1, p-1)$.
Hence condition \eqref{eq-single-layer-Ap} is
\emph{insufficient} for the boundedness of $M$ on $L^p(w)$.

\item[$\mathrm{(i)}_{\mathrm{b}}$]
On the other hand, observe that condition \eqref{eq-Euclidean-one-ball}
can be equivalently written as the following \emph{two-ball form}:
for any $x\in\mathbb R^n$ and $r,s\in(0,\infty)$,
\begin{align}\label{eq-Euclidean-two-ball}
\frac{1}{|B(x,r+s)|}\int_{B(x,r)}w(y)\,dy
\left[
\frac{1}{|B(x,r+s)|}
\int_{B(x,s)}\sigma(y)\,dy
\right]^{p-1}
\lesssim 1,
\end{align}
where the implicit positive constant
is independent of $x$, $r$, and $s$.
Indeed, for any $r,s\in(0,\infty)$,
let $R:=\max\{r,s\}$.
Note that, for any $x\in\mathbb R^n$,
\begin{align*}
B(x,r)\subset B(x,R),
\,B(x,s)\subset B(x,R),
\end{align*}
and
\begin{align*}
|B(x,r+s)|\sim(r+s)^n\sim R^n\sim|B(x,R)|,
\end{align*}
where the positive equivalence constants are independent of
$x$, $r$, and $s$. Thus, applying the ball $A_p$ condition
\eqref{eq-Euclidean-one-ball} with its radius parameter chosen to be $R$,
we obtain \eqref{eq-Euclidean-two-ball}.
Conversely, taking $r=s$
in \eqref{eq-Euclidean-two-ball} and using $|B(x,2r)|\sim |B(x,r)|$
for any $x\in\mathbb{R}^n$ and $r\in(0,\infty)$
yield \eqref{eq-Euclidean-one-ball}. Consequently, \eqref{eq-Euclidean-one-ball}
and \eqref{eq-Euclidean-two-ball} are equivalent.

We now turn to the weight condition \eqref{eq-two-layer-characterization} on $T$.
Obviously, by \eqref{eq-two-layer-characterization},
for $\varepsilon\in(0,\infty)$,  $w\in \mathscr{A}_{p,\varepsilon}$ if and only if,
for any $x\in T$ and $j,m\in\mathbb Z_+$,
\begin{align}\label{eq-twolayer-1}
\frac1{k^{j+m}}\sum_{y\in D_j(x)}w(y)
\left[\frac1{k^{j+m}}\sum_{y\in D_m(x)}
\sigma(y)\right]^{p-1}
\lesssim\frac{1}{k^{\varepsilon(j+m)}}
\end{align}
with the implicit positive constant independent of
$x$, $j$, and $m$. Note that $|D_{j+m}(x)|=k^{j+m}$,
and hence inequality \eqref{eq-twolayer-1}
coincides with the following one
\begin{align}\label{eq-twolayer-2}
\frac1{|D_{j+m}(x)|}\sum_{y\in D_j(x)}w(y)
\left[\frac1{|D_{j+m}(x)|}\sum_{y\in D_m(x)}
\sigma(y)\right]^{p-1}
\lesssim\frac{1}{|D_{j+m}(x)|^\varepsilon}.
\end{align}
Thus, if $\varepsilon=0$ and if we regard the
layers in $T$ as the balls in $\mathbb{R}^n$,
then \eqref{eq-twolayer-2} coincides with \eqref{eq-Euclidean-two-ball}.
This explains the reasonability of $D_{j+m}$ in \eqref{eq-twolayer-2}.

\item[$\mathrm{(i)}_{\mathrm{c}}$]
Moreover, different from the Euclidean ball $A_p$
condition \eqref{eq-Euclidean-two-ball},
the right-hand side of \eqref{eq-twolayer-2}
has the exponential decay $\frac{1}{|D_{j+m}(x)|^\varepsilon}$,
instead of $1$, which is due to the fact that $|D_{j+m}(x)|=k^{j+m}$,
the exponential growth on $j$ and $m$.
We first observe that the analogue of \eqref{eq-twolayer-2}
in the Euclidean setting makes no sense because
even the constant weight $w\equiv1$ does not satisfy such a condition.
To be precise, if the right-hand side of
\eqref{eq-Euclidean-two-ball} is replaced by
$\frac{1}{|B(x,r+s)|^{\varepsilon}}$
for some given $\varepsilon\in(0,\infty)$, then
$w\equiv1$ does not satisfy this new
condition. Indeed, taking $r=s$ in such a condition yields,
for any $x\in\mathbb{R}^n$ and $r\in(0,\infty)$,
\begin{align}\label{eq-1}
\frac{1}{2^{np}}=\frac{|B(x,r)|^p}{|B(x,2r)|^p}
\lesssim\frac{1}{|B(x,2r)|^{\varepsilon}},
\end{align}
which is impossible as $r\to\infty$.
Thus, $w\equiv1$ does not satisfy
\eqref{eq-Euclidean-two-ball}
with its right-hand side replaced by
$\frac{1}{|B(x,r+s)|^{\varepsilon}}$
for any given $\varepsilon\in(0,\infty)$.
In contrast to \eqref{eq-1}, consider again the constant
weight $w\equiv1$ on $T$. Taking $j=m$
in \eqref{eq-twolayer-2}, we obtain,
for any $x\in T$ and $j\in\mathbb{Z}_+$,
\begin{align*}
\frac{1}{k^{jp}}=\frac{|D_j(x)|^p}{|D_{2j}(x)|^p}
\lesssim\frac{1}{|D_{2j}(x)|^\varepsilon}
=\frac{1}{k^{2j\varepsilon}},
\end{align*}
which does not lead to a contradiction when
$\varepsilon\in(0,\frac{p}{2}]$.
Thus, unlike the Euclidean space case, an additional exponential
decay in \eqref{eq-twolayer-2} is admissible.
Furthermore, the parameter $\varepsilon$ in
\eqref{eq-twolayer-2} provides a
stratification of two-layer Muckenhoupt classes.
Moreover, by the definition,
for any $0<\varepsilon_1<\varepsilon_2$,
$\mathscr A_{p,\varepsilon_2}\subset\mathscr A_{p,\varepsilon_1}$,
which further implies that
\begin{align*}
\mathscr{A}_p=
\bigcup_{\varepsilon\in(0,\infty)}\mathscr{A}_{p,\varepsilon}
=\lim_{\varepsilon\in (0,\infty),\, \varepsilon\to 0}\mathscr{A}_{p,\varepsilon}
\end{align*}
in the sense of set convergence.
\end{itemize}

\item[{\rm (ii)}] Let all the notation be the same as in
Definition \ref{def:exponential-Ap}.
Although $\mathscr{A}_{p,\varepsilon}$ is defined
for any $p,\varepsilon\in(0,\infty)$ in Definition \ref{def:exponential-Ap},
we will show in Proposition \ref{prop:nonempty-exp-class}
that $\mathscr{A}_{p,\varepsilon}\ne\emptyset$ precisely when
either $p\in(1,\infty)$ and $\varepsilon\in(0,\frac{p}{2}]$
or $p\in(\frac{1}{2},1]$ and $\varepsilon\in(0,p-\frac{1}{2}]$.
Consequently, $\mathscr{A}_p\ne\emptyset$
if and only if $p\in(\frac{1}{2},\infty)$.
\end{itemize}
\end{remark}

For any $r\in\mathbb{Z}_+$,
the \emph{spherical averaging operator}
$A_r^\circ$ is defined by setting,
for any function $f$ on $T$ and for any $x\in T$,
\begin{align}\label{eq-def-Acirc}
A^\circ_r f(x):=\frac{1}{|S(x,r)|}
\sum_{y\in S(x,r)}|f(y)|.
\end{align}

Our first main result characterizes the
boundedness of $M$ on $L^p(w)$ for $p\in(0,\infty)$.

\begin{theorem}\label{thm-Bound-Ap}
The following two statements hold.
\begin{itemize}
\item[{\rm (I)}] If $p\in(\frac{1}{2},\infty)$,
then, for any weight $w$ on $T$,
the following three assertions are mutually equivalent.
\begin{itemize}
\item[{\rm (i)}] $M$ is bounded on $L^p(w)$.
\item[{\rm (ii)}] $w\in\mathscr{A}_p$.
\item[{\rm (iii)}] There exists $\varepsilon\in(0,\infty)$
such that, for any $r\in\mathbb Z_+$ and $f\in L^p(w)$,
\begin{align}\label{eq-Ar-exponential}
\left\|A_r^\circ f\right\|_{L^p(w)}
\lesssim \frac{1}{k^{\varepsilon r}}\|f\|_{L^p(w)},
\end{align}
where the implicit positive constant is independent of $r$ and $f$.
\end{itemize}
\item[{\rm (II)}] If $p\in(0,\frac{1}{2}]$, then there exists no weight
$w$ on $T$ such that $M$ is bounded on $L^p(w)$.
\end{itemize}
\end{theorem}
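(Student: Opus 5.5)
The plan is to pass from balls to spheres, and from spheres to pairs of descendant layers through a common ancestor. Fix $x\in T$ and $r\in\mathbb Z_+$. Every $y\in S(x,r)$ is reached by going up $i$ steps from $x$ to an ancestor $z$ (with $i\le\min\{r,|x|\}$) and then down $r-i$ steps into a different branch. Hence $S(x,r)$ is the disjoint union over $i$ of sets $D_{r-i}(z)\setminus D_{r-2i}(z')$, where $z'$ is the child of $z$ lying toward $x$ (for $i\ge1$). Since $|S(x,r)|\sim k^r$ and $|B(x,r)|\sim k^r$, we have $A_r f\lesssim\sum_{s\le r}k^{s-r}A_s^\circ f$. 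The whole argument will exploit this ancestor decomposition: the pairs $(x,y)$ with $d(x,y)=r$ are parametrized by $z\in T$ and $(j,m)=(i,r-i)$, with $x\in D_j(z)$ and $y\in D_m(z)$.

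\emph{(ii)$\Rightarrow$(iii).} We write
\[
\|A_r^\circ f\|_{L^p(w)}^p\lesssim k^{-rp}\sum_{i=0}^r\sum_{z}\sum_{x\in D_i(z)}w(x)\Bigl(\sum_{y\in D_{r-i}(z)}|f(y)|\Bigr)^p,
\]
for $p>1$ up to a factor $(r+1)^{p}$, which is harmless against exponential decay. Bringing $p$ inside via the power-triangle inequality for $p\le1$, and via H\"older with $\sigma$ for $p>1$, bounds the $(z,i)$ term by
\[
k^{-rp}\,w(D_i(z))\,[\sigma(D_{r-i}(z))]^{p-1}\sum_{y\in D_{r-i}(z)}|f(y)|^pw(y),
\]
with $\max_{D_{r-i}(z)} w^{-1}$ replacing the $\sigma$ factor when $p\le1$. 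By the definition of $\mathscr A_{p,\varepsilon}$ this is at most $[w]_{\mathscr A_{p,\varepsilon}}k^{-\varepsilon r}\sum_{y\in D_{r-i}(z)}|f(y)|^pw(y)$. Summing over $z$ counts each $y$ once for each $i$, so
\[
\|A_r^\circ f\|_{L^p(w)}^p\lesssim (r+1)^{C}k^{-\varepsilon r}\|f\|_{L^p(w)}^p,
\]
and any smaller $\varepsilon$ works.

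\emph{(iii)$\Rightarrow$(i).} Bound $Mf\le\sup_r A_r f$ by $\sum_s A_s^\circ f$ up to constants, using $A_rf\lesssim\sum_{s\le r}k^{s-r}A_s^\circ f\le\sum_s A_s^\circ f$. For $p\ge1$, Minkowski's inequality gives $\|Mf\|\lesssim\sum_s k^{-\varepsilon s}\|f\|$. For $p<1$, use the $p$-triangle inequality $\|\sum g_s\|^p\le\sum\|g_s\|^p$ instead.

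\emph{(i)$\Rightarrow$(ii).} This is the main obstacle. Test $M$ on $f=\mathbf 1_{D_m(z)}\cdot g$, with $g=\sigma$ when $p>1$ and $g=\delta_{y_0}$ when $p\le1$, where $y_0$ maximizes $w^{-1}$ on $D_m(z)$. For $x\in D_j(z)$ we have $D_m(z)\subset B(x,j+m)$, so
\[
Mf(x)\gtrsim k^{-(j+m)}\sum_{D_m(z)} g .
\]
This gives $w(D_j(z))[\sigma(D_m(z))]^{p-1}\lesssim k^{p(j+m)}$, which is the condition with $\varepsilon=0$. The difficulty is the gain $k^{-\varepsilon(j+m)}$. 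My approach is to exploit that $Mf(x)$ is also large on the whole sphere $S(x',\cdot)$ at comparable distances, i.e. on many disjoint translated layers $D_j(z'')$ for descendants of ancestors of $z$. Summing the $\varepsilon=0$ estimates over these layers would give a self-improvement: the $L^p(w)$ bound shows that the quantity $\Phi(n):=\sup_{j+m=n}k^{-p n}w(D_j)\sigma(D_m)^{p-1}$ satisfies a submultiplicative-type inequality $\Phi(n_1+n_2)\le c\,\Phi(n_1)\Phi(n_2)$, together with $\Phi(n_0)<1/c$ for some large $n_0$. The latter should come from the fact that boundedness forces $\sum_n\Phi(n)<\infty$; I would establish this by testing on $f$ supported on $D_m(z)$ and integrating $Mf$ over all layers $D_j(z)$, $j\ge0$, simultaneously. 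Iterating then yields geometric decay $\Phi(n)\lesssim k^{-\varepsilon n}$.

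\emph{(II).} Combine (i)$\Rightarrow$(ii) with Proposition \ref{prop:nonempty-exp-class}, which shows $\mathscr A_p=\emptyset$ for $p\le\frac12$. Alternatively, argue directly: taking $j=m$ in the $\varepsilon=0$ bound together with the summability above yields
\[
w(D_j)\max_{D_j} w^{-1}\ge k^{j},
\]
which contradicts the required decay $k^{(2p-\varepsilon)j}$ when $2p\le1$.
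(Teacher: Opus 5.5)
Your arguments for (ii)$\Rightarrow$(iii) and (iii)$\Rightarrow$(i) are fine and essentially coincide with the paper's: the ancestor decomposition gives $A_r^\circ f\lesssim\sum_{j+m=r}Q_jP_mf$, followed by H\"older or the $p$-triangle inequality layer by layer. There is a genuine gap in (i)$\Rightarrow$(ii), exactly at the step you flag.

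Your submultiplicativity of $\Phi$ is true, even with $c=1$, but for an operator reason rather than the one you sketch. By \eqref{eq-exact-two-layer-norm}, $\Phi(n)=\sup_{j+m=n}\|Q_jP_m\|^p_{L^p(w)\to L^p(w)}$. If $j\ge n_1$, then $Q_jP_m=Q_{n_1}(Q_{j-n_1}P_m)$. If $j<n_1$, then $Q_jP_m=(Q_jP_{m-n_2})P_{n_2}$.

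What is not established is $\Phi(n_0)<1$ for some $n_0$. Testing $M$ on $\sigma\mathbf 1_{D_m(z)}$ and summing over $x\in D_j(z)$, $j\in\mathbb Z_+$, gives only
\[
\sup_{z\in T,\,m\in\mathbb Z_+}\ \sum_{j\in\mathbb Z_+}k^{-p(j+m)}w(D_j(z))[\sigma(D_m(z))]^{p-1}\lesssim 1
\]
(similarly with $\max_{D_m(z)}w^{-1}$ when $p\le1$). This is a supremum of sums, whereas $\sum_n\Phi(n)$ is a sum of suprema, and the two cannot be interchanged. The bound does not even give $\Phi(n)\to0$. For each $m$ it contains only the single term $j=0$, so it yields no decay of $\|P_m\|^p=\sup_z k^{-pm}w(z)[\sigma(D_m(z))]^{p-1}$ as $m\to\infty$, only boundedness.

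Without the $\varepsilon$-gain, (II) also fails at $p=\frac12$. There the $\varepsilon=0$ bound with $j=m$ reads $k^j\lesssim k^{2pj}=k^j$, which is no contradiction. So your direct argument only covers $p<\frac12$.

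The missing idea is a mechanism that turns per-function summability into operator-norm decay. In the paper, set $f_N:=f\mathbf 1_{T_N}$. Then $P_mf(x)\lesssim M^\circ f_{|x|+m}(x)$ and $Q_jf(x)\lesssim M^\circ f_{|x|-j}(x)$, so boundedness of $M$ gives, for every $f$,
\[
\sum_m\|P_mf\|^p_{L^p(w)}+\sum_j\|Q_jf\|^p_{L^p(w)}\lesssim\sum_N\|f_N\|^p_{L^p(w)}=\|f\|^p_{L^p(w)}.
\]
Applying this to $P_nf$ and using $P_\ell P_n=P_{\ell+n}$ gives $\sum_{\ell\ge n}\|P_\ell f\|^p_{L^p(w)}\le A\|P_nf\|^p_{L^p(w)}$. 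Hence the tails shrink by the factor $1-\frac1A$ at each step, and $\|P_n\|_{L^p(w)\to L^p(w)}\le A^{1/p}\theta^n$ uniformly in $f$ (Lemma \ref{lem-directional-decay}); the same holds for $Q_j$. Then $\|Q_jP_m\|\le\|Q_j\|\|P_m\|\lesssim\rho^{j+m}$, and \eqref{eq-characteristic-operator-form} gives $w\in\mathscr A_{p,\varepsilon}$ directly for small $\varepsilon$. This makes your submultiplicativity step unnecessary and also settles (II), including $p=\frac12$, via Proposition \ref{prop:nonempty-exp-class}.
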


\begin{remark}
\begin{itemize}
\item[{\rm (i)}] The key idea used to prove Theorem \ref{thm-Bound-Ap}
in the case $p\in(\frac 12, \infty)$ is as follows.
The boundedness of $M$ in (i) of Theorem \ref{thm-Bound-Ap} first yields a
summability estimate as in \eqref{eq-directional-summability}
for two families of directional averaging operators
$\{P_m\}_{m\in\mathbb{Z}_+}$ and $\{Q_j\}_{j\in\mathbb{Z}_+}$
[see \eqref{eq-def-Pm} and \eqref{eq-def-Qm}].
Using the fact that $\{P_m\}_{m\in\mathbb{Z}_+}$ and $\{Q_j\}_{j\in\mathbb{Z}_+}$
have the semigroup property, we improve the above
summability estimate to the exponential decay of
their operator norms on $L^p(w)$, which is then used to prove (ii) and (iii)
of Theorem \ref{thm-Bound-Ap}.

\item[{\rm (ii)}]
Let $p\in(\frac{1}{2},1]$.
In Example \ref{exam-p<1}, we provide a large family of
weights $w$ on $T$ such that $M$ is bounded on $L^p(w)$.
This, together with Theorem \ref{thm-Bound-Ap}(II),
further implies that the critical value of $p$
such that $M$ is bounded on $L^p(w)$ is $\frac{1}{2}$.
In contrast, the corresponding critical value in $\mathbb{R}^n$ is $1$.
Indeed, there exists no weight $w$ on $\mathbb{R}^n$ such that
$M_{\mathbb{R}^n}$ is bounded on $L^p(w)$ when $p\in(0,1]$.
It also follows from Theorem \ref{thm-Bound-Ap}(II)
and Corollary \ref{cor:exponential-radial}
that $M$ is bounded on $L^p(T)$ if and only if $p\in(1,\infty)$.
Thus, the critical value of $p$
such that $M$ is bounded on the unweighted Lebesgue space $L^p(T)$ is $1$,
which is the same as the corresponding one in $\mathbb{R}^n$.
Thus, for $p\in(\frac{1}{2},1]$, the boundedness of $M$ on $L^p(w)$
is possible only when $w$ is not a constant weight.
\end{itemize}

\end{remark}

Based on Theorem \ref{thm-Bound-Ap}, we find that
$\mathscr{A}_p\neq\emptyset$ if and only if
$p\in (\frac 12, \infty)$. Next, we state the quantitative estimate for the boundedness of $M$ on $L^p(w)$ for any
$p\in(\frac{1}{2},\infty)$. To this end, let
\begin{align*}
\varepsilon^*_p:=\begin{cases}
\displaystyle\frac{p}{2}
&\text{if }p\in(1,\infty),\\
\displaystyle
p-\frac{1}{2}
&\text{if }p\in(\frac{1}{2},1].
\end{cases}
\end{align*}
As a byproduct of the proof of Theorem \ref{thm-Bound-Ap},
we have the following conclusion.

\begin{corollary}\label{cor-exp-quantitative}
Let $p\in(\frac{1}{2},\infty)$, $\varepsilon\in(0,\varepsilon^*_p]$, and
$w\in\mathscr{A}_{p,\varepsilon}$.  Then, for any $f\in L^p(w)$,
\begin{align}\label{eq-exp-quantitative}
\|M f\|_{L^p(w)}\lesssim
[w]_{\mathscr{A}_{p,\varepsilon}}^{\frac{1}{p}}\left\|f\right\|_{L^p(w)},
\end{align}
where the implicit positive constant is independent
of $k$, $f$, and $[w]_{\mathscr{A}_{p,\varepsilon}}$.
Moreover, the exponent $\frac{1}{p}$ of the
weight constant in \eqref{eq-exp-quantitative} is \emph{optimal}.
\end{corollary}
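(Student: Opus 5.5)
The proof has two parts: a quantitative upper bound and a sharpness example. For the upper bound, I reduce $M$ to directional averages along the tree. For any $x\in T$ and $y\in S(x,r)$, let $z$ be the confluent of $x$ and $y$, i.e. the common ancestor of $x$ and $y$ nearest to them. Then $y\in D_m(z)$ and $x\in D_j(z)$ with $j+m=r$ and $j\le |x|$. Hence
\begin{align*}
A_r^\circ f(x)\lesssim k^{-r}\sum_{j+m=r}\ \sum_{y\in D_m(x^{(j)})}|f(y)|
= \sum_{j+m=r}k^{-j}\,(Q_jP_m|f|)(x)\cdot(\text{normalization}),
\end{align*}
where $x^{(j)}$ is the $j$-th ancestor of $x$. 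Here $P_m g(z):=k^{-m}\sum_{y\in D_m(z)}g(y)$ averages downward, and $Q_j$ is evaluation at the $j$-th ancestor. Since $|B(x,r)|\sim k^r$, I get $Mf\lesssim \sup_r A_r^\circ f\le \sum_r A_r^\circ f$. For $p\le1$ I use the $p$-triangle inequality instead of the triangle inequality. Either way, it suffices to show
\begin{align*}
\Big\|\,k^{-(j+m)}\sum_{y\in D_m(x^{(j)})}|f(y)|\,\Big\|_{L^p(w)}^p\lesssim [w]_{\mathscr A_{p,\varepsilon}}\,k^{-\varepsilon(j+m)}\,\|f\|_{L^p(w)}^p .
\end{align*}
Once this holds, summing over the $r+1$ pairs $(j,m)$ and then over $r$ gives a geometric series $\sum_r (r+1)k^{-\varepsilon r}$ (or its $p$-th power analogue). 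This series is bounded uniformly in $k\ge2$ because $\varepsilon$ is fixed. The result is the constant $[w]^{1/p}$.

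To prove the displayed estimate, I regroup the left side by the ancestor $z=x^{(j)}$. It equals $\sum_{z}w(D_j(z))\,\big[k^{-(j+m)}\sum_{y\in D_m(z)}|f(y)|\big]^p$.

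For $p>1$, Hölder's inequality with $\sigma=w^{-1/(p-1)}$ gives
\begin{align*}
\Big[\sum_{D_m(z)}|f|\Big]^p\le \Big[\sum_{D_m(z)}|f|^pw\Big]\,\sigma(D_m(z))^{p-1}.
\end{align*}
The definition of $[w]_{\mathscr A_{p,\varepsilon}}$ then bounds $w(D_j(z))\,\sigma(D_m(z))^{p-1}k^{-p(j+m)}$ by $[w]\,k^{-\varepsilon(j+m)}$. The sets $D_m(z)$ over distinct $z$ at a fixed level are disjoint, so summing over $z$ gives $\sum_z\sum_{D_m(z)}|f|^pw\le\|f\|^p_{L^p(w)}$.

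For $p\le 1$, I use $(\sum|f|)^p\le\sum|f|^p$ and $|f(y)|^p\le |f(y)|^pw(y)\max_{D_m(z)}w^{-1}$, and the $p\le1$ form of the constant then closes the estimate in the same way. The restriction $\varepsilon\le\varepsilon_p^*$ only reflects that $\mathscr A_{p,\varepsilon}$ is otherwise empty. The main bookkeeping obstacle is the constant in the comparison $|B(x,r)|\sim k^r$ and the near-root case $j\le|x|$. I handle the latter by simply dropping the constraint, which only enlarges the sum. I must check that all these constants are independent of $k$.

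For optimality, I test on the radial weights $w_\beta(x)=k^{\beta|x|}$ with $\beta\in(-1,p-1)$ (Corollary \ref{cor:exponential-radial}) and compute $[w_\beta]_{\mathscr A_{p,\varepsilon}}$. For $p>1$ this gives $\sup_{j,m}k^{(\beta+1)j+(p-1-\beta)m-(p-\varepsilon)(j+m)}$ times a constant. The weight constant blows up as $\beta$ approaches an endpoint once $\varepsilon$ is tuned. A cleaner choice is a point-mass test. Take $f=\mathbf 1_{\{z\}}$ and a weight that is tiny at $z$ and large on a set $D_j$ at distance $r$. Then $\|Mf\|^p_{L^p(w)}\gtrsim k^{-pr}w(D_j)$ while $\|f\|^p=w(z)$, so the ratio is comparable to the $\mathscr A$-quotient $w(D_j(x))\max w^{-1}\,k^{-p(j+m)}$. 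Choosing the weight so that this quotient nearly attains $[w]k^{-\varepsilon(j+m)}$ at bounded $j+m$ yields $\|M\|^p\gtrsim[w]$, so no exponent below $1/p$ is possible. I expect constructing this family, so that the supremum in $[w]$ is achieved at bounded $j+m$ while the weight stays in $\mathscr A_{p,\varepsilon}$, to be the main obstacle. I would try $w=1$ except $w=\lambda^{-1}$ at a single vertex with $\lambda\to\infty$, and check that $[w]\sim\lambda$.
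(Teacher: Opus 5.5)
Your upper bound follows the paper's own argument (Lemmas \ref{lem:strong-pointwise}, \ref{lem-exact-two-layer-norm} and \ref{lem:exp-class-synthesis}). The steps are the bound $A_r^\circ f\lesssim\sum_{j+m=r}T_{j,m}f$, the H\"older resp.\ $p$-triangle bound $\|T_{j,m}\|^p_{L^p(w)\to L^p(w)}\le[w]_{\mathscr A_{p,\varepsilon}}k^{-\varepsilon(j+m)}$, and a series $\sum_r(r+1)k^{-\varepsilon r}$ that is bounded uniformly in $k\ge2$. This part is fine, and the constants are $k$-free because $|S(x,r)|\ge|D_r(x)|=k^r$.

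\textbf{The gap is in the optimality part.} Your mechanism is the right one and is what the paper uses: test $f=\mathbf 1_{\{z\}}$ at a vertex where the weight is tiny, which captures the $x=z$, $m=0$ quotients $w(D_j(z))/(w(z)k^{(p-\varepsilon)j})$ of the constant. The concrete family you propose, however, is $w\equiv1$ with one value replaced by $\lambda^{-1}$, and it is in general not in $\mathscr A_{p,\varepsilon}$ at all.

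\begin{itemize}
\item By \eqref{eq-radial-constant} with $\beta=0$, $[1]_{\mathscr A_{p,\varepsilon}}=\sup_{j,m\in\mathbb Z_+}k^{j(1-p+\varepsilon)}k^{m(\varepsilon-\min\{p,1\})}$.
\item This is infinite for every $\varepsilon$ when $p\in(\frac12,1]$.
\item When $p>1$ it is infinite for $\varepsilon\in(\min\{1,p-1\},\frac p2]$, an interval that is nonempty unless $p=2$.
\item Changing $w$ at one vertex $v$ only alters the quotients with $x$ an ancestor of $v$. The supremum over the other branches therefore stays infinite, so $[w]_{\mathscr A_{p,\varepsilon}}=\infty$ and there is nothing to compare with.
\end{itemize}

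So your construction proves optimality only for $p>1$ and $\varepsilon\le\min\{1,p-1\}$, where one does get $[w]\sim\lambda$ and $\|M\|^p\gtrsim\lambda$. Your first idea is also a dead end. $[w_\beta]_{\mathscr A_{p,\varepsilon}}$ is a supremum of a product of two geometric sequences, so it equals $1$ or $\infty$ and never blows up gradually.

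\textbf{The fix (the paper's choice).} Perturb a base weight that lies in $\mathscr A_{p,\varepsilon}$ for every $\varepsilon\in(0,\varepsilon^*_p]$. Take $w_\beta$ with $\beta:=\frac{\max\{p,1\}}{2}-1$; by \eqref{eq-radial-constant}, $[w_\beta]_{\mathscr A_{p,\varepsilon}}=1$. Set $u_\delta(o):=\delta$ and $u_\delta:=w_\beta$ elsewhere.

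Since $o$ belongs only to $D_0(o)$, the only quotients that change are those with $x=o$ and $j=0$ or $m=0$.
\begin{itemize}
\item The $m=0$ ones are $u_\delta(D_j(o))/(\delta k^{(p-\varepsilon)j})$. This equals $1$ for $j=0$ and $\delta^{-1}k^{j(1+\beta-p+\varepsilon)}\le\delta^{-1}$ for $j\ge1$, using $\varepsilon\le\varepsilon^*_p$.
\item The $j=0$, $m\ge1$ ones are $\delta$ times the corresponding quotients of $w_\beta$, hence at most $\delta$.
\end{itemize}
Hence $[u_\delta]_{\mathscr A_{p,\varepsilon}}\le\delta^{-1}$.

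Now test $f=\mathbf 1_{\{o\}}$. Since $Mf\ge(k+2)^{-1}$ on $D_1(o)$, you get $\|M\|^p_{L^p(u_\delta)\to L^p(u_\delta)}\ge k^{1+\beta}(k+2)^{-p}\delta^{-1}\gtrsim[u_\delta]_{\mathscr A_{p,\varepsilon}}$. Letting $\delta\to0$ rules out any exponent smaller than $\frac1p$, for every admissible $p$ and $\varepsilon$.
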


Note that, for $p\in(1,\infty)$, the optimal exponent $\frac{1}{p}$ of
$[w]_{\mathscr{A}_{p,\varepsilon}}$ in \eqref{eq-exp-quantitative}
is different from its Euclidean counterpart $\frac{1}{p-1}$,
which was established by Buckley in \cite[Theorem 2.5]{buc93}.
For any $p\in(1,\infty)$, we establish two further characterizations of the
boundedness of $M$ on $L^p(w)$. For this purpose,
suppose that $w$ is a weight on $T$. Let
\begin{align}\label{eq-def-dualweight}
\sigma:=w^{-\frac{1}{p-1}}.
\end{align}
Then, obviously, $\sigma$ is also a weight on $T$.
For any $E\subset T$,
let $\mathbf{1}_E$ be its \emph{characteristic function}.
Recall that, in $\mathbb{R}^n$, Sawyer \cite{sawyer82}
characterized the two-weight strong-type boundedness of $M_{\mathbb{R}^n}$
by the so-called Sawyer testing condition.
However, Ombrosi and Rivera-R'{\i}os \cite[Theorem 4.3]{or22}
showed that, even in the one-weight setting on $T$,
the classical Sawyer-type testing condition,
namely, for every ball $B\subset T$,
\begin{align}\label{eq-Sawyer-B}
\left\|M\left(\sigma\mathbf{1}_B\right)\mathbf{1}_B
\right\|_{L^p(w)}^p\lesssim\sigma(B)
\end{align}
is insufficient for the boundedness of
$M$ on $L^p(w)$. Next, we state the sufficient
condition for the boundedness of
$M$ on $L^p(w)$ in \cite[Theorem 1.1]{or22}.
For any $E,F\subset T$ and $r\in\mathbb{Z}_+$, let
\begin{align}\label{eq-r-incidence-set}
I_r(E,F):=\{(x,y)\in E\times F:\,d(x,y)=r\}
\end{align}
be the \emph{distance-$r$ incidence set between $E$ and $F$}.
Let $\mathbf{1}\otimes w$ be the weighted
product measure on $T \times T$ determined by
$(\mathbf{1}\otimes w)(E\times F):=|E|w(F)$
for $E,F\subset T$. In \cite[Theorem 1.1]{or22}, Ombrosi and
Rivera-R\'{\i}os proved that, if there exists $\beta\in(0,1)$
and $\alpha\in(\beta,p)$ such that,
for any $E,F\subset T$ and $r\in\mathbb{Z}_+$,
\begin{align}\label{eq-condition-sufficient}
(\mathbf{1}\otimes w)\left(I_r(E,F)\right)
\lesssim k^{\beta r}\left[w(E)\right]^{\frac{\alpha}{p}}
\left[w(F)\right]^{1-\frac{\alpha}{p}},
\end{align}
where the implicit positive constant is independent of
$E$, $F$, and $r$, then $M$ is bounded on $L^p(w)$.
Motivated by these results, we obtain the following two further characterizations
of the boundedness of $M$ on $L^p(w)$ when $p\in(1,\infty)$.

\begin{theorem}\label{thm-strong}
Let $p\in(1,\infty)$,  $w$ be a weight on $T$, and
$\sigma$ as in \eqref{eq-def-dualweight}.  Then the following
four statements are mutually equivalent.
\begin{itemize}
\item[{\rm (i)}] $M$ is bounded on $L^p(w)$.
\item[{\rm (ii)}] For any ball $B\subset T$,
\begin{align}\label{eq-global-ball-testing}
\left\|M\left(\sigma\mathbf{1}_B\right)\right\|_{L^p(w)}^p
\lesssim\sigma(B),
\end{align}
where the implicit positive constant is independent of $B$.
\item[{\rm (iii)}] There exists $\beta\in(0,1)$ such that, for any
$E,F\subset T$ and $r\in\mathbb Z_+$,
\begin{align}\label{eq-MA-alpha-one}
(\mathbf{1}\otimes w)\left(I_r(E,F)\right)
\lesssim k^{\beta r}[w(E)]^{\frac1p}[w(F)]^{1-\frac1p},
\end{align}
where the implicit positive constant is independent of $E$, $F$, and $r$.
\item[{\rm (iv)}] $w\in\mathscr{A}_p$.
\end{itemize}
\end{theorem}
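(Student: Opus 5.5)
Since Theorem \ref{thm-Bound-Ap} already gives (i)$\Leftrightarrow$(iv) for $p\in(1,\infty)$, it suffices to prove the chain (i)$\Rightarrow$(ii)$\Rightarrow$(iv) and (iv)$\Rightarrow$(iii)$\Rightarrow$(i). Note that (ii) is far weaker than (i). The work therefore lies in showing that (ii) forces the two-layer condition, and in relating (iii) to (i) and (iv).

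\textbf{(i)$\Rightarrow$(ii).} Apply the boundedness to $f=\sigma\mathbf 1_B$. Since $\sigma^pw=\sigma$, this gives
\[
\|M(\sigma\mathbf 1_B)\|_{L^p(w)}^p\lesssim \|\sigma\mathbf 1_B\|^p_{L^p(w)}=\sigma(B).
\]

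\textbf{(ii)$\Rightarrow$(iv).} I would test (ii) on $B:=B(x',R)$ centred at a suitable point $x'$ so that $B$ contains $D_m(x)$. The natural choice is $x'=x$ with $R=m$; choosing an ancestor of $x$ is also possible. For $y\in D_j(x)$, the ball $B(y,j+m)$ contains $D_m(x)$ and has cardinality $\sim k^{j+m}$. Hence
\[
M(\sigma\mathbf 1_B)(y)\gtrsim k^{-(j+m)}\sigma(D_m(x)),
\]
and (ii) yields
\[
w(D_j(x))\,k^{-p(j+m)}\sigma(D_m(x))^p\lesssim\sigma(B(x,m)).
\]
The difficulty is that $\sigma(B(x,m))$ may be much larger than $\sigma(D_m(x))$, and that no $\varepsilon$-gain appears. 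I would therefore not aim directly at $\mathscr A_{p,\varepsilon}$. Instead I would use (ii) to recover the summability estimate \eqref{eq-directional-summability} for the directional operators $P_m,Q_j$ from the proof of Theorem \ref{thm-Bound-Ap}: testing on balls and summing over the disjoint pieces of level sets should give $\sum_m\|P_m\cdots\|\lesssim 1$ in the restricted form needed. Then I would invoke the same semigroup argument used there to upgrade this to exponential decay, which gives (iv). The main obstacle is making the testing functions $\sigma\mathbf 1_B$ sufficient. If the semigroup route only works for general $f$, I would first try to prove a restricted weak/strong type bound for $M$ on $L^p(w)$ from (ii) using the tree structure (every ball is a union of descendant layers of at most $R$ ancestors), and then apply the semigroup argument to $P_m,Q_j$, which are positive operators whose norms are governed by the testing on $\sigma\mathbf 1_{D_m(x)}$.

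\textbf{(iv)$\Rightarrow$(iii).} By Theorem \ref{thm-Bound-Ap}(iii), $\|A_r^\circ\|_{L^p(w)\to L^p(w)}\lesssim k^{-\varepsilon r}$. Since $|S(y,r)|\sim k^r$,
\[
(\mathbf 1\otimes w)(I_r(E,F))=\sum_{y\in F}w(y)\,|S(y,r)\cap E|\sim k^r\sum_{y\in F}w(y)\,A_r^\circ\mathbf 1_E(y).
\]
By H\"older's inequality with respect to $w$, this is at most
\[
k^r\|A_r^\circ\mathbf 1_E\|_{L^p(w)}\,w(F)^{1-1/p}\lesssim k^{(1-\varepsilon)r}w(E)^{1/p}w(F)^{1-1/p}.
\]
Taking $\beta:=1-\varepsilon$ gives (iii), after shrinking $\varepsilon$ if needed so that $\beta\in(0,1)$.

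\textbf{(iii)$\Rightarrow$(i).} The hypothesis is the endpoint case $\alpha=1$ of \eqref{eq-condition-sufficient}, which \cite{or22} excludes because it requires $\alpha>\beta$ with $\alpha\in(\beta,p)$. Given $\beta<1=\alpha<p$, the case is in fact covered by \cite[Theorem 1.1]{or22}. If a direct argument is preferred, I would run the dual argument: (iii) gives $\|A_r^\circ\mathbf 1_E\|_{L^{p,\infty}(w)}$-type restricted bounds with decay $k^{(\beta-1)r}$. Marcinkiewicz-type interpolation of restricted estimates, obtained by perturbing $p$ via (iii) for nearby exponents (or the self-improvement in Theorem \ref{thm-Bound-Ap}), then gives $\|A_r^\circ\|_{L^p(w)}\lesssim k^{-\varepsilon r}$. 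This is Theorem \ref{thm-Bound-Ap}(iii), hence (i).
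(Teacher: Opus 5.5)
The gap is in (ii)$\Rightarrow$(iv). Your implications (i)$\Rightarrow$(ii), (iv)$\Rightarrow$(iii) and (iii)$\Rightarrow$(i) are fine and match the paper. For (iv)$\Rightarrow$(iii) you use H\"older against $\mathbf 1_F$ together with $\|A_r^\circ\|_{L^p(w)\to L^p(w)}\lesssim k^{-\varepsilon r}$. For (iii)$\Rightarrow$(i), note that nothing is excluded: $\beta<1<p$ puts $\alpha=1$ strictly inside $(\beta,p)$, so \cite[Theorem 1.1]{or22} applies directly.

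(ii)$\Rightarrow$(iv) is the only genuinely new implication in the theorem; everything else follows from Theorem \ref{thm-Bound-Ap} and \cite{or22}. Your proposal has no working mechanism for it. You correctly find that testing on $B(x,m)$ only gives $w(D_j(x))k^{-p(j+m)}[\sigma(D_m(x))]^p\lesssim\sigma(B(x,m))$, with an uncontrolled ratio $\sigma(B(x,m))/\sigma(D_m(x))$. Your proposed fix is to extract $\sum_m\|P_mf\|_{L^p(w)}^p\lesssim\|f\|_{L^p(w)}^p$ for all $f$ from the testing condition, so that Lemma \ref{lem-directional-decay} applies. This is only asserted, and it is essentially as hard as proving (ii)$\Rightarrow$(i) itself. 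The same applies to your fallback of first deriving a restricted-type bound for $M$ from (ii).

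The missing ideas are those of Lemma \ref{lem-global-testing-decay}, and they differ for the two families. Let $\lambda$ be the constant in \eqref{eq-global-ball-testing}.

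\emph{The family $Q_m$.} Test on singletons $B=B(z,0)$. Since $M(\sigma\mathbf 1_{\{z\}})\gtrsim k^{-m}\sigma(z)$ on $D_m(z)$ and $\sigma^{1-p}=w$, you get $\sum_m k^{-pm}w(D_m(z))\lesssim\lambda w(z)$. By Tonelli this is exactly \eqref{eq-directional-summability} for $\{Q_m\}$.

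\emph{The family $P_m$.} No summability is proved here. Instead, use the exact formula $\|P_m\|^p_{L^p(w)\to L^p(w)}=\sup_x\alpha_m(x)$, where $\alpha_m(x):=w(x)[\sigma(D_m(x))]^{p-1}k^{-pm}$ (Lemma \ref{lem-exact-two-layer-norm} with $j=0$), and show $\sup_x\alpha_m(x)\to0$. The steps are:
\begin{enumerate}
\item Evaluate $M(\sigma\mathbf 1_{B(x,m)})$ at the centre $x$ only. This gives $w(x)[\sigma(B(x,m))]^{p-1}\lesssim\lambda k^{pm}$. Hence $\alpha_m\lesssim\lambda$ uniformly, and $\sigma(B(x,m))/\sigma(D_m(x))\lesssim[\lambda/\alpha_m(x)]^{1/(p-1)}$. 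This controls the ratio you were stuck on.
\item Use the global nature of (ii) across all intermediate generations. Set $g_j:=P_{m-j}(\sigma\mathbf 1_{D_m(x)})$. Then $g_j\lesssim M(\sigma\mathbf 1_{B(x,m)})$ on $D_j(x)$, so by disjointness the energies $\|g_j\mathbf 1_{D_j(x)}\|_{L^p(w)}^p$, $0\le j\le m$, sum to $\lesssim\lambda\sigma(B(x,m))$.
\item Bound each energy from below. Since $P_jg_j(x)=k^{-m}\sigma(D_m(x))$, H\"older gives each one $\ge\alpha_m(x)\sigma(D_m(x))/\alpha_j(x)\gtrsim\alpha_m(x)\sigma(D_m(x))/\lambda$.
\item Combine the three steps to get $(m+1)[\alpha_m(x)]^{p'}\lesssim\lambda^{p'+1}$. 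Hence $\|P_{m_0}\|<1$ for some $m_0$, and submultiplicativity $\|P_{\ell+m}\|\le\|P_\ell\|\,\|P_m\|$ yields exponential decay.
\end{enumerate}

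Once both families decay exponentially, $\|T_{j,m}\|\le\|Q_j\|\,\|P_m\|$ and \eqref{eq-characteristic-operator-form} give $w\in\mathscr A_p$. The paper instead derives \eqref{eq-Ar-exponential} and then (iii); the two routes are equivalent.
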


\begin{remark}
By Theorem \ref{thm-strong} and \cite[Theorem 4.3]{or22}, we find that
the local Sawyer-type testing condition in \eqref{eq-Sawyer-B}
does not characterize the boundedness of $M$ on $L^p(w)$,
but its global counterpart in \eqref{eq-global-ball-testing} does.
Moreover, it follows from Theorem \ref{thm-strong} that the
estimate \eqref{eq-condition-sufficient} for the weighted
product measure of incidence sets is also necessary for
the boundedness of $M$ on $L^p(w)$ in the sense that this boundedness guarantees that
there exists some $\beta\in(0,1)$ such that \eqref{eq-condition-sufficient} holds
with $\alpha=1$.
\end{remark}

The next theorem shows that,
for any $p\in(\frac{1}{2},1]$, the boundedness
of $M$ on $L^p(w)$ is equivalent to the following
point-testing condition.

\begin{theorem}\label{thm-atomic-characterization}
Let $p\in(\frac{1}{2},1]$, and let $w$ be a weight on $T$.
Then $M$ is bounded on $L^p(w)$ if and only if
\begin{align}\label{eq-test-onepoint}
\sup_{y\in T}\frac1{w(y)}
\sum_{x\in T}\frac{w(x)}{k^{p d(x,y)}}<\infty
\end{align}
if and only if $w\in\mathscr{A}_p$.
\end{theorem}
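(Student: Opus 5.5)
Since Theorem \ref{thm-Bound-Ap}(I) already gives the equivalence between the boundedness of $M$ on $L^p(w)$ and $w\in\mathscr{A}_p$ for $p\in(\frac12,1]$, it suffices to prove that \eqref{eq-test-onepoint} is equivalent to the boundedness of $M$ on $L^p(w)$. The guiding observation is that, for $p\le 1$, $L^p$ quasi-norms are subadditive in the $p$-th power, so the operator is controlled by its action on point masses $\mathbf 1_{\{y\}}$.

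\emph{Necessity.} Assume $M$ is bounded on $L^p(w)$ and test it on $f=\mathbf 1_{\{y\}}$. Then $\|f\|_{L^p(w)}^p=w(y)$. Taking $r=d(x,y)$ in \eqref{eq-def-M} and using \eqref{eq-sphere-ball-growth} gives
\[
M\mathbf 1_{\{y\}}(x)\ge |B(x,d(x,y))|^{-1}\gtrsim k^{-d(x,y)}.
\]
Hence
\[
\sum_x w(x)k^{-pd(x,y)}\lesssim \|M\|^p w(y),
\]
which is exactly \eqref{eq-test-onepoint}.

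\emph{Sufficiency.} Write $f=\sum_y f(y)\mathbf 1_{\{y\}}$. Sublinearity gives $Mf\le\sum_y|f(y)|M\mathbf 1_{\{y\}}$ pointwise. Next, bound $M\mathbf 1_{\{y\}}$ itself:
\[
M\mathbf 1_{\{y\}}(x)=\sup_{r\ge d(x,y)}|B(x,r)|^{-1}\lesssim k^{-d(x,y)},
\]
since $|B(x,r)|\gtrsim k^r$ is increasing in $r$. Because $p\le1$, we have $(\sum a_y)^p\le\sum a_y^p$, so
\[
\sum_x [Mf(x)]^pw(x)\lesssim \sum_y |f(y)|^p\sum_x w(x)k^{-pd(x,y)}\le C\sum_y|f(y)|^pw(y),
\]
where $C$ is the supremum in \eqref{eq-test-onepoint}. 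This is boundedness, with $\|M\|^p\lesssim C$.

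The only delicate point is the uniformity of the constants in $|B(x,r)|\sim k^r$ when the center $x$ is near the root. This is covered by \eqref{eq-sphere-ball-growth}, since the lower bound $|B(x,r)|\ge|D_r(x)|=k^r$ holds trivially for every $x$. Combining the two directions with Theorem \ref{thm-Bound-Ap}(I) then yields all three equivalences. If one wants a self-contained link between \eqref{eq-test-onepoint} and $w\in\mathscr{A}_p$, one can decompose the sum by the geodesic from $y$: go up to the ancestor $z$ of $y$ with $|y|-|z|=m$, then down $j$ generations to $D_j(z)$. This gives $\sum_{j,m}k^{-p(j+m)}w(D_j(z))$, and the $\mathscr A_{p,\varepsilon}$ bound $w(D_j(z))\le[w]k^{(p-\varepsilon)(j+m)}\min_{D_m(z)}w\le[w]k^{(p-\varepsilon)(j+m)}w(y)$ makes the double series converge geometrically. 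However, this route is unnecessary given Theorem \ref{thm-Bound-Ap}.
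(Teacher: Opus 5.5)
Your proposal is correct and follows essentially the same route as the paper. Like the paper, you reduce to the equivalence between boundedness of $M$ and \eqref{eq-test-onepoint} via Theorem \ref{thm-Bound-Ap}(I). For necessity you test on point masses using $M(\mathbf 1_{\{y\}})(x)\sim k^{-d(x,y)}$, and for sufficiency you dominate $Mf(x)$ by $\sum_{y}k^{-d(x,y)}|f(y)|$ and then apply \eqref{eq-embeding-l1} together with Tonelli's theorem. Your optional direct argument that $w\in\mathscr{A}_{p,\varepsilon}$ implies \eqref{eq-test-onepoint} is also sound, since decomposing over the ancestors of $y$ yields an upper bound, which is all that direction needs.
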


The remainder of this article is organized as follows.

In Section \ref{sec:weight}, we study the basic properties
of the classes $\mathscr{A}_p$,
including the non-emptiness, the strict monotonicity, and the open property
(see Propositions \ref{prop:nonempty-exp-class} and \ref{prop:exp-class-structure}).

Section \ref{sec:strong} is devoted to the proofs of
Theorems \ref{thm-Bound-Ap}, \ref{thm-strong}, and \ref{thm-atomic-characterization}
and Corollary \ref{cor-exp-quantitative}. For this purpose, we first establish the
relationship between the operator norms of composition operators
$\{Q_j\circ P_m\}_{j,m\in\mathbb{Z}_+}$ on $L^p(w)$ and the weight constant
$[w]_{\mathscr{A}_{p,\varepsilon}}$
in Definition \ref{def:exponential-Ap} (see Lemma \ref{lem-exact-two-layer-norm}).
Using Lemma \ref{lem-directional-decay}, we show that the boundedness of
$M$ on $L^p(w)$ guarantees the exponential decay of
the operator norms of $\{P_m\}_{m\in\mathbb{Z}_+}$
and $\{Q_j\}_{j\in\mathbb{Z}_+}$ on $L^p(w)$.
We then prove Theorem \ref{thm-Bound-Ap} and Corollary \ref{cor-exp-quantitative}.
As a corollary of Theorem \ref{thm-Bound-Ap}, we also
obtain the summability characterizations of
the boundedness of $M$ on $L^p(w)$ (see Corollary \ref{cor-sum-characterization}).
Next, we show that, when $p\in(1,\infty)$, \eqref{eq-global-ball-testing}
also yields the exponential decay of operator norms of $\{P_m\}_{m\in\mathbb{Z}_+}$
and $\{Q_j\}_{j\in\mathbb{Z}_+}$ on $L^p(w)$ (see Lemma \ref{lem-global-testing-decay}).
Using this, we prove Theorem \ref{thm-strong}. Finally,
we show Theorem \ref{thm-atomic-characterization}.

In Section \ref{sec:radial}, for any $p\in(\frac{1}{2},\infty)$,
we obtain the characterizations of the
boundedness of $M$ on $L^p(w)$ and from $L^p(w)$ to $L^{p,\infty}(w)$
for radial weights $w$ (see Theorem \ref{thm-bound-radial}).
Furthermore, when $p\in(0,\frac{1}{2}]$, we also prove that
there exists no weight $w$ on $T$ such that
$M$ is bounded from $L^p(w)$ to $L^{p,\infty}(w)$
(see Proposition \ref{prop:weak-lower-endpoint}).
In Proposition \ref{prop:weak-atomic-characterization},
for any $p\in(\frac{1}{2},1)$, we obtain the characterization
of the boundedness of $M$ from $L^p(w)$ to $L^{p,\infty}(w)$.
Moreover, for any $p\in(\frac{1}{2}, \infty)$,
we determine the exact ranges
of $\beta\in\mathbb{R}$ such that $M$ is bounded on $L^p(w_\beta)$ and
from $L^p(w_\beta)$ to $L^{p,\infty}(w_\beta)$, where
$w_\beta$ is the exponential radial weight in \eqref{eq-def-radial-beta}
(see Corollary \ref{cor:exponential-radial}).

Finally, in Section \ref{sec:applications}, we give several
applications of the exponential decay obtained in Theorem \ref{thm-Bound-Ap},
including the boundedness results for exponentially decaying kernel operators
(see Theorem \ref{thm:decaying-kernel}),
the pointwise exponential decay of spherical and ball averages
(see Proposition \ref{prop:pointwise-decay-averages}),
and the weighted Fefferman--Stein vector-valued
inequalities (see Theorem \ref{thm:vector-valued}).

We end this introduction with some conventions on the notation.
Let $\mathbb{Z}$ denote the set of all integers.
For any $p\in(1,\infty)$,
let $p':=\frac{p}{p-1}$ denote the \emph{conjugate index} of $p$
(that is, $\frac{1}{p}+\frac{1}{p'}=1$) and,
for any $a\in\mathbb{R}$, let $\lfloor a\rfloor$ denote the
largest integer not greater than $a$, and let $a_+:=\max\{a,0\}$.
Suppose that $p\in(0,\infty)$ and $w$ is a weight on $T$.
For an operator $\mathscr{T}$ bounded on $L^p(w)$, let
$\|\mathscr{T}\|_{L^p(w)\to L^p(w)}$ denote its operator norm.
The symbol $C$ denotes a positive constant which is independent
of the main parameters involved, but may vary from line to line.
The notation $A\lesssim B$ means that $A\leq CB$ for some positive constant $C$,
while $A\sim B$ means $A\lesssim B\lesssim A$.
Finally, in all subsequent proofs we retain the notation introduced
in the relevant statement.

\section{Two-Layer Muckenhoupt Weights}
\label{sec:weight}

In this section, we investigate the properties of
the classes $\mathscr{A}_{p,\varepsilon}$ and
$\mathscr{A}_p$
in Definition \ref{def:exponential-Ap}, including the
precise ranges of $p$ and $\varepsilon$ such that these classes are
nonempty and the strict monotonicity, and the open property of
the classes $\mathscr{A}_p$.

We begin with some notation.
For any $N\in\mathbb{Z}_+$, let
\begin{align}\label{eq-def-TN}
T_N:=\left\{x\in T:\,|x|=N\right\}.
\end{align}
For any $\beta\in\mathbb{R}$, the \emph{exponential
radial weight $w_\beta$} is defined by setting, for any $x\in T$,
\begin{align}\label{eq-def-radial-beta}
w_\beta(x):=k^{\beta|x|}.
\end{align}
Then we have the following nonempty properties of
$\mathscr{A}_{p,\varepsilon}$ and $\mathscr{A}_p$.
\begin{proposition}\label{prop:nonempty-exp-class}
Let $p,\varepsilon\in(0,\infty)$.  Then
$\mathscr{A}_{p,\varepsilon}\ne\emptyset$ if and only if
both $p\in(1,\infty)$ and $\varepsilon\in(0,\frac{p}{2}]$ or
both $p\in(\frac{1}{2},1]$ and $\varepsilon\in(0,p-\frac{1}{2}]$.
Consequently, $\mathscr{A}_p\ne\emptyset$
if and only if $p\in(\frac{1}{2},\infty)$.
\end{proposition}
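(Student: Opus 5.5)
We prove necessity by testing the defining supremum on special pairs $(j,m)$, and sufficiency by exhibiting the exponential radial weights $w_\beta$ in \eqref{eq-def-radial-beta}.

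\emph{Necessity, case $p\in(1,\infty)$.} Take $j=m$. By H\"older's inequality on the finite set $D_j(x)$,
\begin{align*}
k^{jp}=|D_j(x)|^p\le w(D_j(x))\,[\sigma(D_j(x))]^{p-1}.
\end{align*}
Hence $w\in\mathscr A_{p,\varepsilon}$ forces $k^{jp}\lesssim k^{2j(p-\varepsilon)}$ for all $j$, that is, $\varepsilon\le p/2$.

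\emph{Necessity, case $p\in(0,1]$.} Fix $x$ and take $j=m$. Choose $y\in D_j(x)$ with $w(y)=\min_{D_j(x)}w$. Then
\begin{align*}
w(D_j(x))\max_{D_j(x)}w^{-1}\ge \sum_{z\in D_j(x)}\frac{w(z)}{w(y)}\ge k^j.
\end{align*}
This gives $k^j\lesssim k^{2j(p-\varepsilon)}$, i.e.\ $\varepsilon\le p-\tfrac12$. Since $\varepsilon>0$, this forces $p>\tfrac12$. So if $p\le\tfrac12$, the class $\mathscr A_{p,\varepsilon}$ is empty for every $\varepsilon$. The "consequently" part then follows by taking the union over $\varepsilon$.

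\emph{Sufficiency, case $p\in(1,\infty)$.} For $w_\beta$ and $|x|=n$ we have
\begin{align*}
w_\beta(D_j(x))=k^{\beta(n+j)+j},\qquad
\big[w_\beta^{-\frac1{p-1}}(D_m(x))\big]^{p-1}=k^{-\beta(n+m)+m(p-1)}.
\end{align*}
The $n$-dependence cancels, so the ratio in \eqref{eq-two-layer-characterization} equals
\begin{align*}
k^{\,j(1+\beta-p+\varepsilon)+m(\varepsilon-1-\beta)}.
\end{align*}
Boundedness in $j,m\ge0$ requires $\varepsilon\le \min\{p-1-\beta,\,1+\beta\}$. Choosing $\beta=\tfrac p2-1$ makes both quantities equal to $\tfrac p2$. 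Thus $w_{p/2-1}\in\mathscr A_{p,p/2}$, and by the inclusion $\mathscr A_{p,\varepsilon_2}\subset\mathscr A_{p,\varepsilon_1}$ for $\varepsilon_1<\varepsilon_2$, all $\varepsilon\in(0,\tfrac p2]$ are covered.

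\emph{Sufficiency, case $p\in(\tfrac12,1]$.} The ratio for $w_\beta$ is
\begin{align*}
k^{\beta(n+j)+j-\beta(n+m)}\,k^{-(p-\varepsilon)(j+m)}\quad\text{if }\beta\ge0,
\end{align*}
since then $\max_{D_m(x)}w_\beta^{-1}=k^{-\beta(n+m)}$. For $\beta<0$ the maximum is also attained on $D_m(x)$, so the same formula holds. The exponent is
\begin{align*}
j(1+\beta-p+\varepsilon)+m(-\beta-p+\varepsilon).
\end{align*}
We need $\varepsilon\le p-1-\beta$ and $\varepsilon\le p+\beta$. Choosing $\beta=-\tfrac12$ gives $\varepsilon\le p-\tfrac12$ in both conditions, so $w_{-1/2}\in\mathscr A_{p,p-1/2}$, and monotonicity in $\varepsilon$ again covers the full range.

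No step is a real obstacle. The only care needed is the $p\le1$ lower bound, where the maximum of $w^{-1}$ must be paired with the minimizer of $w$ on the same layer.
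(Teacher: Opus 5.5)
Your proof is correct and follows the paper's argument: necessity comes from testing $j=m$ (via H\"older, or via the minimizer of $w$ on the layer when $p\le 1$), and sufficiency comes from the radial weights $w_\beta$. The only difference is that you pick the specific values $\beta=\frac p2-1$ and $\beta=-\frac12$ and then use monotonicity in $\varepsilon$, whereas the paper computes the full admissible interval of $\beta$ for each $\varepsilon$. One small point: no case split on the sign of $\beta$ is needed when $p\in(\frac12,1]$, because $w_\beta$ is constant on $D_m(x)\subset T_{n+m}$, so $\max_{y\in D_m(x)}w_\beta^{-1}(y)=k^{-\beta(n+m)}$ for every $\beta$.
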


\begin{proof}
We first prove the necessity by considering
two cases for the range of $p$.

\emph{Case (1)} $p\in(1,\infty)$. In this case,
suppose that $w\in\mathscr{A}_{p,\varepsilon}$.
Using H\"older's inequality, we conclude that,
for any $x\in T$ and $n\in\mathbb Z_+$,
\begin{align*}
w(D_n(x))\left[w^{-\frac{1}{p-1}}(D_n(x))
\right]^{p-1}\geq |D_n(x)|^p=k^{pn}.
\end{align*}
Taking $j=m=n$ in \eqref{eq-two-layer-characterization},
we obtain $k^{n(2\varepsilon-p)}\leq[w]_{\mathscr{A}_{p,\varepsilon}}<\infty$.
It follows from the arbitrariness of $n\in\mathbb{Z}_+$ that
$\varepsilon\in(0,\frac{p}{2}]$.
This completes the necessity in this case.

\emph{Case (2)} $p\in(0,1]$.
In this case, assume that $w\in\mathscr{A}_{p,\varepsilon}$.
Observe that, for any $x\in T$ and $n\in\mathbb Z_+$,
\begin{align*}
w(D_n(x))\max_{y\in D_n(x)}w^{-1}(y)\geq |D_n(x)|=k^n.
\end{align*}
Taking $j=m=n$ again in \eqref{eq-two-layer-characterization},
we find that $k^{n(1-2p+2\varepsilon)}\leq
[w]_{\mathscr{A}_{p,\varepsilon}}<\infty$.
Since $n\in\mathbb{Z}_+$ is arbitrary, we obtain
$p\in(\frac{1}{2},1]$ and $\varepsilon\in(0,p-\frac{1}{2}]$,
which completes the proof of the necessity.

Next, we show the sufficiency. To prove that $\mathscr{A}_{p,\varepsilon}$
is nonempty in the desired ranges, for any  $\beta\in\mathbb R$
let $w_\beta$ be as in \eqref{eq-def-radial-beta}.
Note that, for any $N,m\in\mathbb{Z}_+$ and $x\in T_N$,
\begin{align*}
w_\beta(D_m(x))=k^{m+\beta(N+m)} \text{ and }
\max_{y\in D_m(x)}w_\beta^{-1}(y)=\frac{1}{k^{\beta(N+m)}}.
\end{align*}
Using these and \eqref{eq-two-layer-characterization}
shows that, for any $\varepsilon\in(0,\infty)$,
\begin{align}\label{eq-radial-constant}
[w_\beta]_{\mathscr{A}_{p,\varepsilon}}
&=\begin{cases}
\displaystyle\sup_{\genfrac{}{}{0pt}{}{x\in T}{j,m\in\mathbb Z_+}}
\frac{w_\beta(D_j(x))[w_\beta^{-\frac{1}{p-1}}(D_m(x))]^{p-1}}
{k^{(p-\varepsilon)(j+m)}} &\text{ if } p\in(1,\infty),\\
\displaystyle\sup_{\genfrac{}{}{0pt}{}{x\in T}{j,m\in\mathbb Z_+}}
\frac{w_\beta(D_j(x))\displaystyle \max_{y\in D_m(x)}w_\beta^{-1}(y)}{k^{(p-\varepsilon)(j+m)}}
&\text{ if } p\in\left(\frac{1}{2},1\right]
\end{cases}\nonumber\\
&=\begin{cases}
\displaystyle\sup_{j,m\in\mathbb Z_+}
k^{j(1+\beta-p+\varepsilon)}k^{m(\varepsilon-1-\beta)} &\text{ if }  p\in(1,\infty),\\
\displaystyle\sup_{j,m\in\mathbb Z_+}
k^{j(1+\beta-p+\varepsilon)}k^{m(\varepsilon-p-\beta)} &\text{ if } p\in\left(\frac{1}{2},1\right]
\end{cases}
\end{align}
and hence
\begin{align}\label{eq-rangeradial-p>1}
w_\beta\in\mathscr{A}_{p,\varepsilon}\text{ if and only if }
\begin{cases}
\beta\in[\varepsilon-1,p-1-\varepsilon] &\text{ if } p\in(1,\infty),\\
\beta\in[\varepsilon-p, p-1-\varepsilon] &\text{ if } p\in\left(\frac{1}{2},1\right].
\end{cases}
\end{align}
Clearly, when both $p\in(1,\infty)$ and
$\varepsilon\in(0,\frac{p}{2}]$, or both
$p\in(\frac{1}{2},1]$ and $\varepsilon
\in(0,p-\frac{1}{2}]$, the ranges of $\beta$ in \eqref{eq-rangeradial-p>1} are nonempty and hence the class $\mathscr{A}_{p,\varepsilon}$ is nonempty.
This completes the proof of the sufficiency.

Moreover, it follows from the definition of $\mathscr{A}_p$
that  $\mathscr{A}_p\ne\emptyset$
if and only if $p\in(\frac{1}{2},\infty)$.
This completes the proof of Proposition \ref{prop:nonempty-exp-class}.
\end{proof}

In the following proposition, we obtain the
strict monotonicity and the open property of
the classes $\mathscr{A}_p$.

\begin{proposition}\label{prop:exp-class-structure}
The following two assertions hold.
\begin{itemize}
\item[{\rm (i)}]
If $\frac12<p<q<\infty$, then
$\mathscr{A}_p\subsetneqq\mathscr{A}_q$.

\item[{\rm (ii)}]
For any $p\in(\frac12,\infty)$,
\begin{align}\label{eq-open-exp-class}
\mathscr{A}_p
=\bigcup_{r\in(\frac{1}{2},p)}
\mathscr{A}_r.
\end{align}
\end{itemize}
\end{proposition}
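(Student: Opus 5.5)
The plan is to argue directly from Definition~\ref{def:exponential-Ap}, comparing the two-layer quantities for different indices through elementary power-mean and $\ell^1\subset\ell^t$ inequalities. Strictness will come from the exponential radial weights $w_\beta$ via \eqref{eq-rangeradial-p>1}. Write $\sigma_s:=w^{-\frac{1}{s-1}}$ for $s\in(1,\infty)$.

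\emph{Inclusion in (i).} Let $w\in\mathscr A_{p,\varepsilon}$ with $\frac12<p<q$. I will show $w\in\mathscr A_{q,\varepsilon}$ with the same $\varepsilon$, treating three cases.
\begin{itemize}
\item[{\rm (a)}] $1<p<q$. Since $\frac1{q-1}<\frac1{p-1}$, the power-mean inequality for $w^{-1}$ on $D_m(x)$ gives
\begin{align*}
\left[k^{-m}\sigma_q(D_m(x))\right]^{q-1}\le \left[k^{-m}\sigma_p(D_m(x))\right]^{p-1}.
\end{align*}
Hence
\begin{align*}
w(D_j(x))[\sigma_q(D_m(x))]^{q-1}\le k^{m(q-p)}[w]_{\mathscr A_{p,\varepsilon}}k^{(p-\varepsilon)(j+m)}\le [w]_{\mathscr A_{p,\varepsilon}}k^{(q-\varepsilon)(j+m)}.
\end{align*}
\item[{\rm (b)}] $p\le1<q$. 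Use $[\sigma_q(D_m(x))]^{q-1}\le k^{m(q-1)}\max_{y\in D_m(x)}w^{-1}(y)$. This bounds the $q$-quantity by $[w]_{\mathscr A_{p,\varepsilon}}k^{(p-\varepsilon)(j+m)+m(q-1)}$, which is at most $[w]_{\mathscr A_{p,\varepsilon}}k^{(q-\varepsilon)(j+m)}$ because $p+q-1\le q$.
\item[{\rm (c)}] $p<q\le 1$. This is immediate, since $k^{(p-\varepsilon)(j+m)}\le k^{(q-\varepsilon)(j+m)}$.
\end{itemize}

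\emph{Strictness in (i).} By \eqref{eq-rangeradial-p>1} and taking the union over $\varepsilon$, $w_\beta\in\mathscr A_s$ if and only if $\beta\in(-1,s-1)$ when $s>1$, and $\beta\in(-s,s-1)$ when $s\in(\frac12,1]$. The lower endpoint $-\min\{s,1\}$ is non-increasing in $s$, while the upper endpoint $s-1$ strictly increases. So $\beta:=\frac{p+q}{2}-1$ gives $w_\beta\in\mathscr A_q\setminus\mathscr A_p$: indeed $\beta>-\frac12\ge-\min\{q,1\}$ and $p-1<\beta<q-1$.

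\emph{Openness (ii).} The inclusion $\supseteq$ follows from (i). For $\subseteq$, let $w\in\mathscr A_{p,\varepsilon}$; by Proposition~\ref{prop:nonempty-exp-class} we may assume $\varepsilon\le\varepsilon_p^*$.
\begin{itemize}
\item[{\rm (a)}] $p\in(\frac12,1]$. Take $r:=p-\frac\varepsilon2$, which satisfies $r\ge\frac p2+\frac14>\frac12$. The defining quantity of the $p$-condition (max form) is unchanged, and $k^{(p-\varepsilon)(j+m)}=k^{(r-\frac\varepsilon2)(j+m)}$. Hence $w\in\mathscr A_{r,\varepsilon/2}$.
\item[{\rm (b)}] $p>1$. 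Choose $r\in(1,p)$ with $p-r<\varepsilon$, and set $t:=\frac{p-1}{r-1}\ge1$. Then $\sigma_r=\sigma_p^{\,t}$, so $\ell^1\subset\ell^t$ gives $\sigma_r(D_m(x))\le[\sigma_p(D_m(x))]^t$, that is,
\begin{align*}
[\sigma_r(D_m(x))]^{r-1}\le[\sigma_p(D_m(x))]^{p-1}.
\end{align*}
Therefore the $r$-quantity is at most $[w]_{\mathscr A_{p,\varepsilon}}k^{(r-(\varepsilon-p+r))(j+m)}$, so $w\in\mathscr A_{r,\varepsilon-(p-r)}$.
\end{itemize}

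The only delicate point is choosing the correct direction of the power comparison in each regime. The comparison must be averaged (power mean) when increasing the index, and unaveraged ($\ell^1\subset\ell^t$) when decreasing it; the unaveraged version avoids the factor $k^{m(\cdot)}$ that would otherwise destroy the gain.
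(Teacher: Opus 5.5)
Your proof is correct and follows essentially the same route as the paper. The paper packages your three cases in (i) into the single bound $\Phi_q\le k^{[(q-1)_+-(p-1)_+]m}\Phi_p$. It proves strictness with exponential radial weights, but takes $\beta=p-1$ instead of your midpoint $\beta=\frac{p+q}{2}-1$. For openness it uses the same $\ell^1\subset\ell^t$ comparison with $r-\varepsilon_r=p-\varepsilon$, choosing $r\in(\max\{\frac12,p-\varepsilon\},p)$ directly; this avoids your (valid) appeal to Proposition~\ref{prop:nonempty-exp-class} to guarantee $r=p-\frac{\varepsilon}{2}>\frac12$.
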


\begin{proof}
Suppose that $w$ is a weight on $T$.
For any $u\in(\frac12,\infty)$ and any finite set $E\subset T$,
let $\theta(u):=(u-1)_+$ and
\begin{align*}
\Phi_u(E,w):=
\begin{cases}
\displaystyle\left[\sum_{y\in E}
w^{-\frac1{u-1}}(y)\right]^{u-1}
&\text{if }u\in(1,\infty),\\
\displaystyle\max_{y\in E}w^{-1}(y)
&\text{if }u\in(\frac12,1].
\end{cases}
\end{align*}

We first prove \textup{(i)}. Let $\frac12<p<q<\infty$ and suppose
that $w\in\mathscr{A}_p$. By the definition of $\mathscr{A}_p$,
we find that there exists $\varepsilon\in(0,\infty)$ such that
$w\in\mathscr{A}_{p,\varepsilon}$.
Recall the following fundamental inequality:
for any $\gamma\in(0,1]$ and any
sequence $\{a_n\}_{n\in\mathbb{N}}$ of non-negative numbers,
\begin{align}\label{eq-embeding-l1}
\left(\sum_{n\in\mathbb{N}}a_n\right)^{\gamma}
\leq\sum_{n\in\mathbb{N}}a^\gamma_n.
\end{align}
Since $|D_m(x)|=k^m$ for any $x\in T$ and $m\in\mathbb{Z}_+$,
it follows from H\"older's inequality and \eqref{eq-embeding-l1} that
\begin{align*}
\Phi_q(D_m(x),w)\leq
k^{[\theta(q)-\theta(p)]m}\Phi_p(D_m(x),w).
\end{align*}
Observe that $0\leq\theta(q)-\theta(p)\leq q-p$.
Consequently, for any $x\in T$ and $j,m\in\mathbb Z_+$,
\begin{align*}
\frac{w(D_j(x))\Phi_q(D_m(x),w)}{k^{(q-\varepsilon)(j+m)}}
&\leq\frac{w(D_j(x))\Phi_p(D_m(x),w)}{k^{(p-\varepsilon)(j+m)}}
\frac{1}{k^{(q-p)j+\{q-p-[\theta(q)-\theta(p)]\}m}}\\
&\leq\frac{w(D_j(x))\Phi_p(D_m(x),w)}{k^{(p-\varepsilon)(j+m)}}.
\end{align*}
Taking the supremum over all $x$, $j$, and $m$ on both sides, we obtain
$[w]_{\mathscr{A}_{q,\varepsilon}}\leq[w]_{\mathscr{A}_{p,\varepsilon}}$,
and hence $w\in \mathscr{A}_{q,\varepsilon}$.
Thus, $\mathscr{A}_p\subset\mathscr{A}_q$.
To complete the proof of (i), it remains to prove that
this inclusion is proper. For any $\beta\in \mathbb{R}$,
let $w_\beta$ be as in \eqref{eq-def-radial-beta}.
Using \eqref{eq-rangeradial-p>1}, we conclude that, for any
$u\in(\frac12,\infty)$, $w_\beta\in\mathscr{A}_u$
if and only if $\beta\in(-\min\{u,1\},u-1)$.
Thus, taking $\beta:=p-1$ we obtain
$w_{\beta}\notin\mathscr{A}_p$.
On the other hand, since $p\in(\frac12,q)$,
from \eqref{eq-rangeradial-p>1}, we deduce that
$w_{p-1}\in\mathscr{A}_q$.
Therefore, $\mathscr{A}_p\subsetneqq
\mathscr{A}_q$. This completes the proof of (i).

We next prove \textup{(ii)}. By (i), we find that, for any
$r\in(\frac12,p)$, $\mathscr{A}_r
\subset\mathscr{A}_p$, and hence
\begin{align}\label{eq-open-reverse-inclusion}
\bigcup_{r\in(\frac{1}{2},p)}\mathscr{A}_r
\subset\mathscr{A}_p.
\end{align}
To show the reverse inclusion in \eqref{eq-open-reverse-inclusion},
let $w\in\mathscr{A}_p$. From the definition of $\mathscr{A}_p$,
we infer that there exists $\varepsilon\in(0,\infty)$ such that
$w\in\mathscr{A}_{p,\varepsilon}$.
Take $r\in(\max\{\frac12,p-\varepsilon\},p)$,
and let $\varepsilon_r:=\varepsilon-(p-r)$.
Note that the choice of $r$ guarantees that
$\varepsilon_r\in(0,\infty)$ and
$r-\varepsilon_r=p-\varepsilon$.
Moreover, since $r<p$, using \eqref{eq-embeding-l1},
we conclude that, for any $x\in T$ and $j,m\in\mathbb Z_+$,
\begin{align*}
\Phi_r(D_m(x),w)\leq\Phi_p(D_m(x),w),
\end{align*}
and hence
\begin{align*}
\frac{w(D_j(x))\Phi_r(D_m(x),w)}{k^{(r-\varepsilon_r)(j+m)}}
=\frac{w(D_j(x))\Phi_r(D_m(x),w)}{k^{(p-\varepsilon)(j+m)}}
\leq\frac{w(D_j(x))\Phi_p(D_m(x),w)}{k^{(p-\varepsilon)(j+m)}}.
\end{align*}
Taking the supremum over all $x$, $j$, and $m$, we obtain
$[w]_{\mathscr{A}_{r,\varepsilon_r}}\leq
[w]_{\mathscr{A}_{p,\varepsilon}}<\infty$,
which further implies that $w\in\mathscr{A}_{r,\varepsilon_r}
\subset\mathscr{A}_r$ and hence
\begin{align}\label{eq-open-forward-inclusion}
\mathscr{A}_p\subset
\bigcup_{r\in(\frac{1}{2},p)}\mathscr{A}_r.
\end{align}
Combining \eqref{eq-open-reverse-inclusion} and
\eqref{eq-open-forward-inclusion}, we conclude that
\eqref{eq-open-exp-class} holds.
This completes the proof of (ii) and
hence Proposition \ref{prop:exp-class-structure}.
\end{proof}

\section{Directional Operators and Strong-Type Characterizations}\label{sec:strong}

This section is devoted to the proofs of the main results
presented in Section \ref{sec:intro}.
To be precise, we present the proofs of Theorem
\ref{thm-Bound-Ap} and Corollary \ref{cor-exp-quantitative}
in Subsection \ref{sub3-1}, and the proofs of
Theorems \ref{thm-strong} and \ref{thm-atomic-characterization}
in Subsection \ref{sub3-2}.
The main idea is to introduce two directional
averaging operators and use the branching
structure of the tree to obtain the exponential
decay of their operator norms.

\subsection{Proofs of Theorem \ref{thm-Bound-Ap} and Corollary \ref{cor-exp-quantitative}}\label{sub3-1}

In this subsection, we prove Theorem \ref{thm-Bound-Ap}
and Corollary \ref{cor-exp-quantitative}.
Suppose that $k\geq 2$ is an integer
and that $T$ is an infinite rooted $k$-ary tree.
The \emph{spherical maximal
operator} $M^\circ$ is defined by setting,
for any function $f$ on $T$ and for any $x\in T$,
\begin{align}\label{eq-def-Mcicr}
M^\circ f(x):=\sup_{r\in\mathbb Z_+}A_r^\circ f(x)
=\sup_{r\in\mathbb Z_+}\frac{1}{|S(x,r)|}\sum_{y\in S(x,r)}|f(y)|.
\end{align}
By \cite[Proposition 2.1]{ors21},
we find that, for any function $f$ on $T$ and for any $x\in T$,
\begin{align}\label{eq-equivalent-M}
Mf(x)\leq M^{\circ}f(x)\leq2 Mf(x).
\end{align}
Therefore, the weighted boundedness properties of $M$ and
$M^\circ$ are equivalent, and it suffices to study
$M^\circ$ and the spherical averaging operators
$\{A_r^\circ\}_{r\in\mathbb Z_+}$.
We next introduce the directional averaging operators.
For any $z,y\in T$, if $z$ lies on the unique
path connecting $y$ to $o$,
we say that $z$ is an \emph{ancestor} of $y$ and $y$ is a \emph{descendant} of $z$.
For any $x\in T$ and $j\in\mathbb{Z}_+\cap[0,|x|]$,
the notation $x^{(j)}$
denotes the ancestor of $x$ such that $d(x^{(j)},x)=j$.
Note that $x^{(0)}=x$ and $x^{(|x|)}=o$.
For any $m,j\in\mathbb Z_+$, the \emph{descendant averaging
operator} $P_m$ and the \emph{ancestor averaging operator} $Q_j$
are defined, respectively, by setting,
for any function $f$ on $T$ and for any $x\in T$,
\begin{align}\label{eq-def-Pm}
P_mf(x):=\frac{1}{k^m}\sum_{y\in D_m(x)}|f(y)|
\end{align}
and
\begin{align}\label{eq-def-Qm}
Q_j f(x):=
\begin{cases}
\displaystyle\frac{1}{k^j}\left|f\left(x^{(j)}\right)\right|&\text{ if }j\leq |x|,\\
0&\text{ if }j>|x|.
\end{cases}
\end{align}
For any $j,m\in\mathbb Z_+$, we define their
composition operator $T_{j,m}$ by setting,
for any function $f$ on $T$ and for any $x\in T$,
\begin{align*}
T_{j,m}f(x):=Q_j(P_mf)(x)=
\begin{cases}
\displaystyle \frac{1}{k^{j+m}}\sum_{y\in D_m(x^{(j)})}|f(y)|
&\text{ if }j\leq |x|,\\
0
&\text{ if }j> |x|.
\end{cases}
\end{align*}

Before proving Theorem \ref{thm-Bound-Ap} and Corollary \ref{cor-exp-quantitative},
we need to study the boundedness properties of the directional averaging operators
and their compositions. The following lemma
gives a pointwise estimate of
$\{A^{\circ}_r\}_{r\in\mathbb{Z}_+}$ in terms of
$\{T_{j,m}\}_{j,m\in\mathbb{Z}_+}$.
For any $x,y\in T$, their \emph{last common ancestor},
denoted by $x\wedge y$, is the common ancestor of both
$x$ and $y$ which has the largest distance from $o$.

\begin{lemma}\label{lem:strong-pointwise}
Suppose that $f$ is a function on $T$.
Then, for any $r\in\mathbb Z_+$ and $x\in T$,
\begin{align}\label{eq-Ar-QP}
A_r^\circ f(x)\lesssim\sum_{\genfrac{}{}{0pt}{}
{j,m\in\mathbb Z_+}{j+m=r}}T_{j,m}f(x),
\end{align}
where the implicit positive constant is independent of $r$, $f$, and $x$.
\end{lemma}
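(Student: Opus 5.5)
Fix $x\in T$ and $r\in\mathbb Z_+$. The plan is to split the sphere $S(x,r)$ according to where the geodesic from $x$ to $y$ turns, namely according to the last common ancestor $x\wedge y$. Then each piece is bounded by one term $T_{j,m}f(x)$, and it remains only to compare normalizing constants.

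For $y\in S(x,r)$, let $j:=d(x,x\wedge y)$, so that $x\wedge y=x^{(j)}$ with $j\le |x|$. Set $m:=d(x\wedge y,y)$. Since the path from $x$ to $y$ in the tree passes through $x\wedge y$, we have $j+m=r$ and $y\in D_m(x^{(j)})$. Hence
\begin{align*}
S(x,r)\subset\bigcup_{\genfrac{}{}{0pt}{}{j,m\in\mathbb Z_+,\ j+m=r}{j\le|x|}}D_m\left(x^{(j)}\right),
\end{align*}
and therefore
\begin{align*}
\sum_{y\in S(x,r)}|f(y)|\le\sum_{\genfrac{}{}{0pt}{}{j+m=r}{j\le |x|}}\ \sum_{y\in D_m(x^{(j)})}|f(y)|
=k^r\sum_{\genfrac{}{}{0pt}{}{j+m=r}{j\le |x|}}T_{j,m}f(x),
\end{align*}
by the definition of $T_{j,m}$. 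The terms with $j>|x|$ vanish by definition, so we may sum over all $j+m=r$. (The set $D_m(x^{(j)})$ contains more than the piece of the sphere, but we only need an upper bound, so the inclusion suffices.)

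It remains to use a lower bound $|S(x,r)|\gtrsim k^r$ that is uniform in $x$ and $r$; the main care needed is near the root. If $r=0$ the claim is trivial. If $r\ge1$, the set $D_r(x)$ has $k^r$ elements and is contained in $S(x,r)$, so $|S(x,r)|\ge k^r$. This bound holds for every $x$, including $x=o$, so the constant in the lemma is $1$, matching \eqref{eq-sphere-ball-growth}. Dividing the displayed inequality by $|S(x,r)|$ gives \eqref{eq-Ar-QP} with implicit constant $1$, which is independent of $r$, $f$, $x$, and even $k$.

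There is no real obstacle here. The only points to check are that the decomposition by $x\wedge y$ is valid, which holds because $T$ contains no cycles, and that the lower bound on $|S(x,r)|$ holds for every $x$, which follows from $D_r(x)\subset S(x,r)$.
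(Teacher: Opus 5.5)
Your proof is correct and follows the paper's argument: you decompose $S(x,r)$ by the last common ancestor $x\wedge y=x^{(j)}$, place $y$ in $D_{r-j}(x^{(j)})$, and identify the resulting sums with $Q_j(P_{r-j}f)(x)=T_{j,r-j}f(x)$. The only difference is that you make the lower bound $|S(x,r)|\ge k^r$ explicit via $D_r(x)\subset S(x,r)$, so the implicit constant can be taken to be $1$, whereas the paper simply cites \eqref{eq-sphere-ball-growth}.
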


\begin{proof}
Suppose that $r\in\mathbb{Z}_+$ and $x\in T$.
For any $y\in S(x,r)$, let
\begin{align*}
a:=x\wedge y\in T,
\  j:=d(x,a)\in\mathbb{Z}_+,\
\text{ and }
m:=d(a,y)\in\mathbb{Z}_+.
\end{align*}
It follows from the structure of $T$ that
$j\in[0, \min\{r,|x|\}]$.
Since $j+m=r$ and $a=x^{(j)}$, we deduce that
$y\in D_{r-j}(x^{(j)})$.
By this and \eqref{eq-sphere-ball-growth}, we find that
\begin{align*}
A_r^\circ f(x)&=\frac{1}{|S(x,r)|}
\sum_{y\in S(x,r)}|f(y)|
\lesssim \frac{1}{k^{r}}\sum_{j=0}^{\min\{r,|x|\}}
\sum_{y\in D_{r-j}(x^{(j)})}|f(y)|\\
&=\sum_{j=0}^{\min\{r,|x|\}}\frac{1}{k^{j}}
\left[\frac{1}{k^{r-j}}\sum_{y\in D_{r-j}(x^{(j)})}|f(y)|\right]
=\sum_{j=0}^{\min\{r,|x|\}}Q_j(P_{r-j}f)(x)\\
&=\sum_{\genfrac{}{}{0pt}{}{j,m\in\mathbb Z_+}{j+m=r}}
Q_j(P_mf)(x)=\sum_{\genfrac{}{}{0pt}{}
{j,m\in\mathbb Z_+}{j+m=r}}T_{j,m}f(x).
\end{align*}
This completes the proof of Lemma \ref{lem:strong-pointwise}.
\end{proof}

The following lemma establishes the relationship
between the operator norms of $\{T_{j,m}\}_{j,m\in\mathbb{Z}_+}$
and the weight constant in Definition \ref{def:exponential-Ap}.

\begin{lemma}\label{lem-exact-two-layer-norm}
Let $p\in(0,\infty)$, and let $w$ be a weight on $T$.
Then, for any $j,m\in\mathbb Z_+$,
\begin{align}\label{eq-exact-two-layer-norm}
\left\|T_{j,m}\right\|_{L^p(w)\to L^p(w)}^p
=\begin{cases}
\frac{\displaystyle\sup_{x\in T}
w(D_j(x))[w^{-\frac{1}{p-1}}(D_m(x))]^{p-1}}{\displaystyle k^{p(j+m)}}
&\text{if }p\in(1,\infty),\\
\displaystyle\frac{\displaystyle \sup_{x\in T}w(D_j(x))
\max_{y\in D_m(x)}w^{-1}(y)}{k^{p(j+m)}}&\text{if }p\in(0,1].
\end{cases}
\end{align}
Furthermore, for any $\varepsilon\in(0,\infty)$,
\begin{align}\label{eq-characteristic-operator-form}
[w]_{\mathscr{A}_{p,\varepsilon}}=\sup_{j,m\in\mathbb Z_+}
k^{\varepsilon(j+m)}\left\|T_{j,m}\right\|_{L^p(w)\to L^p(w)}^p.
\end{align}
\end{lemma}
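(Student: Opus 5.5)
The proof reduces the operator $T_{j,m}$ to a family of decoupled rank-one operators, one for each "pivot" vertex $z=x^{(j)}$. Fix $j,m\in\mathbb Z_+$. For $x$ with $|x|\ge j$ we have $T_{j,m}f(x)=k^{-(j+m)}\sum_{y\in D_m(z)}|f(y)|$ with $z=x^{(j)}$. Hence $T_{j,m}f$ is constant on $D_j(z)$ for each $z\in T$. The sets $\{D_j(z)\}_{z\in T}$ partition $\{x:|x|\ge j\}$, and $T_{j,m}f$ vanishes elsewhere. Therefore
\begin{align*}
\|T_{j,m}f\|_{L^p(w)}^p=\sum_{z\in T}\frac{w(D_j(z))}{k^{p(j+m)}}\Bigl[\sum_{y\in D_m(z)}|f(y)|\Bigr]^p.
\end{align*}
The sets $\{D_m(z)\}_{z\in T}$ are also pairwise disjoint, and their union is $\{y:|y|\ge m\}$. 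Write $f_z:=f\mathbf 1_{D_m(z)}$. Then $\|f\|_{L^p(w)}^p\ge\sum_z\|f_z\|_{L^p(w)}^p$, with equality when $f$ is supported in one $D_m(z)$. So the problem decouples into the single-vertex quantities
\begin{align*}
N_z:=\sup_{g\ne0,\ \mathrm{supp}\, g\subset D_m(z)}\frac{[\sum_{y\in D_m(z)}|g(y)|]^p}{\sum_{y\in D_m(z)}|g(y)|^pw(y)}.
\end{align*}
We then get $\|T_{j,m}\|^p=k^{-p(j+m)}\sup_z w(D_j(z))N_z$. The upper bound comes from summing over $z$; the lower bound comes from testing with $f=f_z$ near-optimal for a single $z$.

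Next compute $N_z$, which is a finite-dimensional duality statement. For $p\in(1,\infty)$, Hölder's inequality gives $\sum|g|=\sum|g|w^{1/p}w^{-1/p}\le(\sum|g|^pw)^{1/p}[\sum w^{-\frac1{p-1}}]^{\frac{p-1}{p}}$. Equality holds for $g=w^{-\frac1{p-1}}\mathbf 1_{D_m(z)}$, so $N_z=[w^{-\frac1{p-1}}(D_m(z))]^{p-1}$. For $p\in(0,1]$, use \eqref{eq-embeding-l1}: $(\sum|g|)^p\le\sum|g|^p\le\max_y w^{-1}(y)\sum|g|^pw$. Equality is attained by $g=\mathbf 1_{\{y_0\}}$, where $y_0\in D_m(z)$ maximizes $w^{-1}$. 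So $N_z=\max_{y\in D_m(z)}w^{-1}(y)$. Both suprema are attained because $D_m(z)$ is finite, which gives the exact identity \eqref{eq-exact-two-layer-norm} and not merely an equivalence.

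Finally, multiply \eqref{eq-exact-two-layer-norm} by $k^{\varepsilon(j+m)}$. The result is $\sup_x w(D_j(x))\Phi/k^{(p-\varepsilon)(j+m)}$, where $\Phi$ denotes the bracket in \eqref{eq-exact-two-layer-norm}. Taking the supremum over $j,m$ gives \eqref{eq-characteristic-operator-form} directly from Definition~\ref{def:exponential-Ap}.

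The main subtlety is the bookkeeping in the first step. We must check that both families $\{D_j(z)\}_z$ and $\{D_m(z)\}_z$ are disjoint, and that the upper bound uses only the inequality $\sum_z\|f_z\|^p\le\|f\|^p$. The case where $\sup_z$ is infinite is handled by the same single-$z$ testing: it shows the operator is then unbounded, so both sides equal $\infty$.
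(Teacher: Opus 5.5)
Your proposal is correct and follows essentially the same argument as the paper. Both start from the identity $\|T_{j,m}f\|_{L^p(w)}^p=k^{-p(j+m)}\sum_{x}w(D_j(x))[\sum_{y\in D_m(x)}|f(y)|]^p$; both get the upper bound from H\"older's inequality (or \eqref{eq-embeding-l1}) on each layer $D_m(x)$ together with the disjointness of these layers; and both get the lower bound by testing with $\sigma\mathbf{1}_{D_m(x)}$ or with a point mass at a minimum-weight vertex. Your single-layer constants $N_z$ only repackage these steps.
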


\begin{proof}
Let $j,m\in\mathbb Z_+$. To prove \eqref{eq-exact-two-layer-norm},
from a change of variables, we infer that, for any $f\in L^p(w)$,
\begin{align}\label{eq-two-layer-basic-identity}
\left\|T_{j,m}f\right\|_{L^p(w)}^p
=\frac{1}{k^{p(j+m)}}\sum_{x\in T}w(D_j(x))
\left[\sum_{y\in D_m(x)}|f(y)|\right]^p.
\end{align}

When $p\in(1,\infty)$, for brevity we write
$$\Gamma_{j,m,x}:=w(D_j(x))\left[w^{-\frac{1}{p-1}}(D_m(x))\right]^{p-1}$$
for any $x\in T$. Using H\"older's inequality and the fact that
the layers $\{D_m(x)\}_{x\in T}$ are pairwise disjoint, we conclude that
\begin{align*}
\left\|T_{j,m}f\right\|_{L^p(w)}^p
&\leq\frac{1}{k^{p(j+m)}}\sum_{x\in T}\Gamma_{j,m,x}
\sum_{y\in D_m(x)}|f(y)|^pw(y)\nonumber\\
&\leq\frac{1}{k^{p(j+m)}}\sup_{z\in T}\Gamma_{j,m,z}
\sum_{x\in T}\sum_{y\in D_m(x)}|f(y)|^pw(y)\nonumber\\
&\leq\frac{1}{k^{p(j+m)}}\sup_{x\in T}\Gamma_{j,m,x}
\left\|f\right\|_{L^p(w)}^p,
\end{align*}
and hence
\begin{align}\label{eq-Tjm-leq}
\left\|T_{j,m}\right\|_{L^p(w)\to L^p(w)}^p\leq
\frac{1}{k^{p(j+m)}}\sup_{x\in T}\Gamma_{j,m,x}
\end{align}
For any $x\in T$ and $m\in\mathbb{Z}_+$, letting $f:=\sigma\mathbf1_{D_m(x)}$
in \eqref{eq-two-layer-basic-identity}, we find that
\begin{align*}
\left\|T_{j,m}\right\|_{L^p(w)\to L^p(w)}^p\geq
\frac{\|T_{j,m}f\|_{L^p(w)}^p}{\|f\|_{L^p(w)}^p}
=\frac{1}{k^{p(j+m)}}\Gamma_{j,m,x}.
\end{align*}
Taking the supremum over all $x\in T$, we obtain the
reverse estimate of \eqref{eq-Tjm-leq}. This, combined with
\eqref{eq-Tjm-leq}, completes the proof of
\eqref{eq-exact-two-layer-norm} in the case $p\in(1,\infty)$.

When $p\in(0,1]$,  by \eqref{eq-embeding-l1}
and \eqref{eq-two-layer-basic-identity}, we find that
\begin{align*}
\left\|T_{j,m}f\right\|_{L^p(w)}^p
&\leq\frac{1}{k^{p(j+m)}}\sum_{x\in T}w(D_j(x))
\sum_{y\in D_m(x)}|f(y)|^p\\
&\leq\frac{1}{k^{p(j+m)}}\sup_{x\in T}
\left[w(D_j(x))\max_{y\in D_m(x)}w^{-1}(y)\right]
\sum_{x\in T}\sum_{y\in D_m(x)}|f(y)|^pw(y)\\
&\leq\frac{1}{k^{p(j+m)}}\sup_{x\in T}
\left[w(D_j(x))\max_{y\in D_m(x)}w^{-1}(y)\right]
\left\|f\right\|_{L^p(w)}^p,
\end{align*}
where, in the last step, we use the fact that
the layers $\{D_m(x)\}_{x\in T}$ are pairwise disjoint again. Thus,
\begin{align}\label{eq-Tjm-leq-1}
\left\|T_{j,m}\right\|_{L^p(w)\to L^p(w)}^p\leq
\frac{1}{k^{p(j+m)}}\sup_{x\in T}w(D_j(x))
\max_{y\in D_m(x)}w^{-1}(y).
\end{align}
The reverse inequality of \eqref{eq-Tjm-leq-1}
follows from testing $\mathbf1_{\{y\}}$ in \eqref{eq-two-layer-basic-identity},
where $y$ is a point of minimum weight in $D_m(x)$; we omit the details.
This proves \eqref{eq-exact-two-layer-norm} in the case $p\in(0,1]$.
Moreover, by \eqref{eq-two-layer-characterization}, we find that
\eqref{eq-characteristic-operator-form} holds.
This completes the proof of Lemma \ref{lem-exact-two-layer-norm}.
\end{proof}

In the following lemma, we pass from the summability
estimate of the directional averages to the exponential decay
of their operator norms on $L^p(w)$.

\begin{lemma}\label{lem-directional-decay}
Let $p\in(0,\infty)$, $w$ be a weight on $T$, and
$\{R_m\}_{m\in\mathbb Z_+}$ denote either $\{P_m\}_{m\in\mathbb Z_+}$
or $\{Q_m\}_{m\in\mathbb Z_+}$.  Suppose that there exists a
positive constant $A\in(1,\infty)$ such that, for any $f\in L^p(w)$,
\begin{align}\label{eq-directional-summability}
\sum_{m\in\mathbb Z_+}\|R_mf\|_{L^p(w)}^p
\leq A\|f\|_{L^p(w)}^p.
\end{align}
Then there exists a positive constant $\theta\in(0,1)$
such that, for any $m\in\mathbb Z_+$ and $f\in L^p(w)$,
\begin{align}\label{eq-directional-exp-general}
\|R_mf\|_{L^p(w)}\leq A^{\frac{1}{p}}\theta^m\|f\|_{L^p(w)}.
\end{align}
\end{lemma}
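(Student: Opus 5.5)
The plan is to combine the summability hypothesis \eqref{eq-directional-summability} with the \emph{semigroup property} of the two families. The point is to first extract a single index $N$ at which the operator norm of $R_N$ is small, and then propagate that smallness through submultiplicativity. Throughout, write $a_m:=\|R_m\|_{L^p(w)\to L^p(w)}^p$. The goal \eqref{eq-directional-exp-general} is equivalent to $a_m\le A\theta^{pm}$ for all $m\in\mathbb Z_+$.

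\textbf{Step 1 (semigroup and submultiplicativity).} First I check directly from \eqref{eq-def-Pm} and \eqref{eq-def-Qm} that $R_{m}R_{n}f=R_{m+n}f$ for all $m,n\in\mathbb Z_+$.
\begin{itemize}
\item For $P$ this holds because $D_m$ of the points of $D_n(x)$ partition $D_{m+n}(x)$, and $P_m$ only sees $|P_nf|=P_nf$.
\item For $Q$ it holds because $(x^{(n)})^{(m)}=x^{(m+n)}$. The vanishing convention when $m+n>|x|$ is consistent on both sides.
\end{itemize}
Moreover $R_0f=|f|$, so $a_0=1$, and the semigroup property gives $a_{m+n}\le a_ma_n$. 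In addition, \eqref{eq-directional-summability} contains the term $m=0$, which equals $\|f\|_{L^p(w)}^p$. Hence $\|R_mf\|_{L^p(w)}^p\le (A-1)\|f\|_{L^p(w)}^p$ for every $m\ge1$, that is, $a_m\le A-1=Ab$ with $b:=\frac{A-1}{A}\in[0,1)$.

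\textbf{Step 2 (one small norm; the key step).} Fix $N\in\mathbb N$. For any $n\in\{0,\dots,N-1\}$, the semigroup property gives $R_Nf=R_{N-n}(R_nf)$, so
\begin{align*}
\|R_Nf\|_{L^p(w)}^p\le a_{N-n}\|R_nf\|_{L^p(w)}^p\le A\|R_nf\|_{L^p(w)}^p.
\end{align*}
Summing over $n$ and applying \eqref{eq-directional-summability} again gives $N\|R_Nf\|_{L^p(w)}^p\le A^2\|f\|_{L^p(w)}^p$, so $a_N\le A^2/N$. Choosing $N:=\lceil 2A^2\rceil$ yields $a_N\le\frac12$. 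This averaging trick is where the real content lies. Summability alone only says $\|R_mf\|_{L^p(w)}\to0$ for each fixed $f$; it is the semigroup structure that upgrades this to a uniform norm bound.

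\textbf{Step 3 (propagation and choice of $\theta$).} Write $m=qN+s$ with $q\in\mathbb Z_+$ and $s\in\{0,\dots,N-1\}$, and set
\begin{align*}
\theta:=\max\left\{2^{-\frac1{2Np}},\,b^{\frac1{Np}}\right\}\in(0,1).
\end{align*}
There are three cases.
\begin{itemize}
\item If $m=0$, then $a_0=1\le A$.
\item If $q=0$ and $1\le s<N$, Step 1 gives $a_s\le Ab\le A\theta^{pN}\le A\theta^{ps}$.
\item If $q\ge1$, submultiplicativity gives $a_m\le a_N^q a_s\le A2^{-q}$. Since $m<(q+1)N\le 2qN$, we get $\theta^{pm}\ge\theta^{2pqN}\ge 2^{-q}$, hence $a_m\le A\theta^{pm}$.
\end{itemize}
Taking $p$-th roots in all three cases gives \eqref{eq-directional-exp-general}, with the prescribed constant $A^{1/p}$.

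The main obstacle is Step 2. The only delicate point after it is keeping the constant exactly $A^{1/p}$ rather than a multiple of it, which is handled by the strict bound $a_m\le A-1$ for $m\ge1$ used in the case $q=0$.
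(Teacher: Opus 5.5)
Your proof is correct, but it takes a different route from the paper's.

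\textbf{The paper's route.} The paper argues function by function with tail sums. It sets $\Phi_n(f):=\sum_{\ell\in\mathbb Z_+}\|R_{\ell+n}f\|_{L^p(w)}^p$. The semigroup law and \eqref{eq-directional-summability} applied to $R_nf$ give $\Phi_n(f)\le A\|R_nf\|_{L^p(w)}^p$. Together with $\Phi_n(f)=\|R_nf\|_{L^p(w)}^p+\Phi_{n+1}(f)$, this yields $\Phi_{n+1}(f)\le(1-\frac1A)\Phi_n(f)$. Iterating gives $\|R_nf\|_{L^p(w)}^p\le\Phi_n(f)\le A(1-\frac1A)^n\|f\|_{L^p(w)}^p$. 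So in a few lines it obtains $\theta=(1-\frac1A)^{1/p}$ and the prefactor $A^{1/p}$, with no operator-norm submultiplicativity and no case analysis.

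\textbf{Your route.} You work instead with the operator norms $a_m$. The averaging trick in Step 2 produces a single index $N=\lceil 2A^2\rceil$ with $a_N\le\frac12$, and submultiplicativity propagates this bound. The observation $a_m\le A-1$ for $m\ge1$, which comes from the $m=0$ term $\|R_0f\|_{L^p(w)}=\|f\|_{L^p(w)}$, is what lets you keep the prefactor exactly $A^{1/p}$. All three cases of Step 3 check out. Submultiplicativity also needs no triangle inequality, so it is fine for $p<1$.

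\textbf{Comparison.} The cost of your approach is more bookkeeping and a much poorer rate: your $\theta=\max\{2^{-1/(2Np)},(1-\frac1A)^{1/(Np)}\}$ is far closer to $1$ than the paper's. The benefit is that your propagation step only needs $\|R_N\|_{L^p(w)\to L^p(w)}<1$ for one $N$, not summability. This is precisely the mechanism the paper itself uses for $\{P_m\}$ in Lemma~\ref{lem-global-testing-decay}, where only the polynomial decay \eqref{eq-decay-Am} is available.
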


\begin{proof}
Let $f\in L^p(w)$. For any $n\in\mathbb Z_+$, let
$$
\Phi_n(f):=\sum_{\ell\in\mathbb Z_+}
\left\|R_{\ell+n}f\right\|_{L^p(w)}^p.
$$
From \eqref{eq-def-Pm} and \eqref{eq-def-Qm}, we deduce that,
for any $\ell, n\in\mathbb{Z}_+$, $R_\ell(R_nf)=R_{\ell+n}f$.
For any $n\in\mathbb{Z}_+$,
applying \eqref{eq-directional-summability} to $R_nf$ we obtain
 $\Phi_n(f)\leq A\left\|R_nf\right\|_{L^p(w)}^p$.
Since, for any $n\in\mathbb{Z}_+$,
$$
\Phi_n(f)=\|R_nf\|_{L^p(w)}^p+\Phi_{n+1}(f),
$$
it follows that
\begin{align}\label{eq-decay}
\Phi_{n+1}(f)\leq\left(1-\frac{1}{A}\right)\Phi_n(f).
\end{align}
Let $\theta:=(1-\frac{1}{A})^{\frac{1}{p}}\in(0,1)$.
Iterating estimate \eqref{eq-decay}
and using \eqref{eq-directional-summability},
we conclude that, for any $n\in\mathbb{Z}_+$,
$$
\|R_nf\|_{L^p(w)}^p\leq \Phi_n(f)
\leq \theta^{np}\Phi_0(f)\leq A\theta^{np}\|f\|_{L^p(w)}^p.
$$
Taking the $p$-th root of both sides then completes the
proof of \eqref{eq-directional-exp-general},
and hence Lemma \ref{lem-directional-decay}.
\end{proof}

The following lemma shows that, for $w\in \mathscr{A}_{p,\varepsilon}$,
the operator norms of $\{A_r^\circ\}_{r\in\mathbb Z_+}$ on $L^p(w)$
decay exponentially on $r$.

\begin{lemma}\label{lem:exp-class-synthesis}
Let $p\in(0,\infty)$, $\varepsilon\in(0,\infty)$, and
$w\in\mathscr{A}_{p,\varepsilon}$.
Then, for any $r\in\mathbb Z_+$,
\begin{align*}
\left\|A_r^\circ\right\|^p_{L^p(w)\to L^p(w)}
\lesssim [w]_{\mathscr{A}_{p,\varepsilon}}
(r+1)^{\max\{1,p\}}\frac{1}{k^{\varepsilon r}},
\end{align*}
where the implicit positive constant is
independent of $k$, $\varepsilon$, and $r$.
\end{lemma}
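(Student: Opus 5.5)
We combine the pointwise bound of Lemma \ref{lem:strong-pointwise} with the exact operator-norm formula of Lemma \ref{lem-exact-two-layer-norm}. Fix $r\in\mathbb Z_+$ and $f\in L^p(w)$. By \eqref{eq-Ar-QP},
\begin{align*}
A_r^\circ f(x)\lesssim\sum_{j=0}^{r}T_{j,r-j}f(x)
\end{align*}
for every $x\in T$. The implicit constant comes only from $|S(x,r)|\sim k^r$ and is independent of $k$, $f$, $x$, and $r$. We then raise this to the power $p$ and sum against $w$. The only thing that matters is how the $L^p(w)$ quasi-norm of a sum of $r+1$ terms is controlled; this is where the factor $(r+1)^{\max\{1,p\}}$ enters.

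We split into two cases. If $p\in(0,1]$, we use \eqref{eq-embeding-l1} pointwise: $\bigl[\sum_{j=0}^rT_{j,r-j}f(x)\bigr]^p\le\sum_{j=0}^r[T_{j,r-j}f(x)]^p$. Summing in $x$ with weight $w$ then gives
\begin{align*}
\|A_r^\circ f\|_{L^p(w)}^p\lesssim\sum_{j=0}^r\|T_{j,r-j}f\|_{L^p(w)}^p.
\end{align*}
If $p\in(1,\infty)$, we instead apply H\"older's (or Jensen's) inequality to the finite sum of $r+1$ terms: $\bigl[\sum_{j=0}^ra_j\bigr]^p\le(r+1)^{p-1}\sum_{j=0}^ra_j^p$. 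This gives the same bound with an extra factor $(r+1)^{p-1}$. In both cases, the constant from \eqref{eq-Ar-QP} is raised to the $p$-th power, which is harmless since it is absolute once $p$ is fixed.

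Next, \eqref{eq-characteristic-operator-form} in Lemma \ref{lem-exact-two-layer-norm} gives, for every $j,m\in\mathbb Z_+$,
\begin{align*}
\|T_{j,m}\|_{L^p(w)\to L^p(w)}^p\le[w]_{\mathscr A_{p,\varepsilon}}k^{-\varepsilon(j+m)}.
\end{align*}
With $j+m=r$ each summand is bounded by $[w]_{\mathscr A_{p,\varepsilon}}k^{-\varepsilon r}\|f\|_{L^p(w)}^p$. Summing the $r+1$ terms yields the factor $(r+1)$ when $p\le1$ and $(r+1)^{1+(p-1)}=(r+1)^p$ when $p>1$, that is, $(r+1)^{\max\{1,p\}}$ in both cases.

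There is no real obstacle here. The only point needing care is that the implicit constant must not depend on $k$ or $\varepsilon$. For this we check that the equivalence $|S(x,r)|\sim k^r$ in \eqref{eq-sphere-ball-growth} is uniform in $k$: in fact $|S(x,r)|\ge k^r$, since $D_r(x)\subset S(x,r)$. Every other constant used above is numerical.
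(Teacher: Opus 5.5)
Your proof is correct and is essentially the paper's argument: you combine the pointwise bound \eqref{eq-Ar-QP} with $\|T_{j,m}\|^p_{L^p(w)\to L^p(w)}\le[w]_{\mathscr A_{p,\varepsilon}}k^{-\varepsilon(j+m)}$ from Lemma \ref{lem-exact-two-layer-norm}, and you use \eqref{eq-embeding-l1} for $p\le 1$. The only cosmetic difference is for $p>1$: the paper applies Minkowski's inequality to the operator norms, whereas you use the pointwise bound $(\sum_{j=0}^r a_j)^p\le(r+1)^{p-1}\sum_{j=0}^r a_j^p$, and both give the same factor $(r+1)^p$ (your observation that $|S(x,r)|\ge k^r$ makes the constant in \eqref{eq-Ar-QP} equal to $1$ is also correct).
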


\begin{proof}
From Lemma \ref{lem-exact-two-layer-norm}, we infer that,
for any $j,m\in\mathbb{Z}_+$,
\begin{align*}
\left\|T_{j,m}\right\|_{L^p(w)\to L^p(w)}^p\leq
[w]_{\mathscr{A}_{p,\varepsilon}}
\frac{1}{k^{\varepsilon(j+m)}}.
\end{align*}
If $p\in[1,\infty)$, applying this, Lemma \ref{lem:strong-pointwise},
and Minkowski's inequality, for any $r\in\mathbb Z_+$ we obtain
\begin{align*}
\left\|A_r^\circ \right\|_{L^p(w)\to L^p(w)}\lesssim
\sum_{\genfrac{}{}{0pt}{}{j,m\in\mathbb Z_+}{j+m=r}}
\left\| T_{j,m}\right\|_{L^p(w)\to L^p(w)}
\lesssim [w]_{\mathscr{A}_{p,\varepsilon}}^{\frac{1}{p}}(r+1)
\frac{1}{k^{\frac{\varepsilon r}{p}}}.
\end{align*}
If $p\in(0,1)$, by inequality \eqref{eq-embeding-l1}
and Tonelli's theorem, we conclude that, for any $r\in\mathbb Z_+$
and $f\in L^p(w)$,
\begin{align*}
\|A_r^\circ f\|_{L^p(w)}^p
\lesssim[w]_{\mathscr{A}_{p,\varepsilon}}
(r+1)\frac{1}{k^{\varepsilon r}}\|f\|_{L^p(w)}^p.
\end{align*}
This completes the proof of Lemma \ref{lem:exp-class-synthesis}.
\end{proof}

We now give the proof of Theorem \ref{thm-Bound-Ap}.

\begin{proof}[Proof of Theorem \ref{thm-Bound-Ap}]
We first prove (I). To show the
implication (i) $\Longrightarrow$ (ii),
assume  that $M$ is bounded on $L^p(w)$.
Suppose that $f\in L^p(w)$ and, for any $N\in \mathbb{Z}_+$, let
$f_N:=f\mathbf1_{T_N}$. From the definitions of
operators $P_m$ and $M^\circ$, we deduce that, for
any $x\in T$ and $m\in\mathbb Z_+$,
\begin{align*}
P_mf(x)=P_mf_{|x|+m}(x)
\lesssim M^\circ f_{|x|+m}(x)
\end{align*}
and hence
\begin{align}\label{eq-sum-Pm}
\sum_{m\in\mathbb Z_+}\left|P_mf(x)\right|^p
\lesssim\sum_{N\in\mathbb Z_+}
\left[M^\circ f_N(x)\right]^p.
\end{align}
Moreover, by the definitions of
operators $Q_j$ and $M^\circ$, we find that,
for any $x\in T$ and $j\in[0,|x|]\cap\mathbb{Z}_+$,
\begin{align*}
Q_jf(x)=Q_jf_{|x|-j}(x)
\lesssim M^\circ f_{|x|-j}(x),
\end{align*}
and hence
\begin{align}\label{eq-sum-Qj}
\sum_{j\in\mathbb Z_+}\left|Q_jf(x)\right|^p\lesssim
\sum_{N\in\mathbb Z_+}\left[M^\circ f_N(x)\right]^p.
\end{align}
Since $M$ and $M^\circ$ are pointwise equivalent [see \eqref{eq-equivalent-M}],
using \eqref{eq-sum-Pm}, \eqref{eq-sum-Qj},
Tonelli's theorem, and the assumed boundedness of $M$ on $L^p(w)$,
we conclude that
\begin{align*}
&\sum_{m\in\mathbb Z_+}\|P_mf\|_{L^p(w)}^p
+\sum_{j\in\mathbb Z_+}\|Q_jf\|_{L^p(w)}^p\\
&\qquad\lesssim
\sum_{N\in\mathbb Z_+}\|M^\circ f_N\|_{L^p(w)}^p
\lesssim
\sum_{N\in\mathbb Z_+}\|f_N\|_{L^p(w)}^p
=\|f\|_{L^p(w)}^p.
\end{align*}
It then follows from Lemma \ref{lem-directional-decay} that
there exist $\theta_P,\theta_Q\in(0,1)$
such that, for any $m,j\in\mathbb{Z}_+$,
\begin{align*}
\|P_m\|_{L^p(w)\to L^p(w)}\lesssim\theta_P^m
\text{ and }
\|Q_j\|_{L^p(w)\to L^p(w)}\lesssim\theta_Q^j.
\end{align*}
Let $\rho:=\max\{\theta_P,\theta_Q\}$.
Applying the definition of $T_{j,m}$, we obtain
\begin{align*}
\|T_{j,m}\|_{L^p(w)\to L^p(w)}^p=
\|Q_j\circ P_m\|_{L^p(w)\to L^p(w)}^p
\lesssim\rho^{p(j+m)}.
\end{align*}
Choose $\varepsilon\in(0,\infty)$ sufficiently small
such that $k^\varepsilon\rho^p<1$. Then
\eqref{eq-characteristic-operator-form} yields
$w\in\mathscr{A}_{p,\varepsilon}$, and hence $w\in\mathscr{A}_p$.
This completes the proof of implication (i) $\Longrightarrow$ (ii).

To show the implication (ii) $\Longrightarrow$ (iii),
suppose that $w\in\mathscr{A}_{p,\varepsilon}$ for some
$\varepsilon\in(0,\infty)$.
By Lemma \ref{lem:exp-class-synthesis} and
the fact that $k\geq 2$, we find that,
for any $r\in\mathbb{Z}_+$,
\begin{align*}
\left\|A_r^\circ\right\|_{L^p(w)\to L^p(w)}
&\lesssim [w]^{\frac{1}{p}}_{\mathscr{A}_{p,\varepsilon}}
(r+1)^{\max\{1,\frac{1}{p}\}}
\frac{1}{k^{\frac{\varepsilon r}{p}}}\\
&=[w]^{\frac{1}{p}}_{\mathscr{A}_{p,\varepsilon}}
(r+1)^{\max\{1,\frac{1}{p}\}}
\frac{1}{k^{\frac{\varepsilon r}{2p}}}
\frac{1}{k^{\frac{\varepsilon r}{2p}}}\\
&\leq[w]^{\frac{1}{p}}_{\mathscr{A}_{p,\varepsilon}}
(r+1)^{\max\{1,\frac{1}{p}\}}
\frac{1}{2^{\frac{\varepsilon r}{2p}}}
\frac{1}{k^{\frac{\varepsilon r}{2p}}}
\lesssim[w]^{\frac{1}{p}}_{\mathscr{A}_{p,\varepsilon}}
\frac{1}{k^{\frac{\varepsilon r}{2p}}},
\end{align*}
where the last inequality follows from the fact that the sequence
$\{(r+1)^{\max\{1,\frac{1}{p}\}}
\frac{1}{2^{\frac{\varepsilon r}{2p}}}\}_{r\in\mathbb{Z}_+}$
is uniformly bounded on $r$. Thus,
\eqref{eq-Ar-exponential} holds.
This completes the proof of
implication (ii) $\Longrightarrow$ (iii).

Finally, we prove the implication (iii) $\Longrightarrow$ (i).
For this purpose, assume that \eqref{eq-Ar-exponential} holds for
some $\varepsilon\in(0,\infty)$. Observe that,
for any $f\in L^p(w)$ and $x\in T$,
\begin{align}\label{eq-M-supAr}
\left[M^\circ f(x)\right]^p\leq
\sum_{r\in\mathbb Z_+}\left[A_r^\circ f(x)\right]^p.
\end{align}
Since $M$ and $M^\circ$ are pointwise equivalent,
using the observation \eqref{eq-M-supAr}, Tonelli's theorem, and
assumption (iii), we conclude that,
for any $f\in L^p(w)$,
\begin{align}\label{eq-Mf-f}
\|Mf\|_{L^p(w)}^p\leq\sum_{r\in\mathbb Z_+}
\left\|A_r^\circ f\right\|_{L^p(w)}^p
\lesssim\sum_{r\in\mathbb Z_+}\frac{1}{k^{\varepsilon r p}}
\|f\|_{L^p(w)}^p\lesssim\|f\|_{L^p(w)}^p,
\end{align}
which further implies that $M$ is bounded on $L^p(w)$.
This completes the proof of the implication
(iii) $\Longrightarrow$ (i), and hence (I).

Finally, we show (II).
Note that the proof of the implication
(i) $\Longrightarrow$ (ii) above is also valid for $p\in(0,\frac{1}{2}]$.
If there exists a weight $w$ on $T$ such that $M$
is bounded on $L^p(w)$ for some $p\in(0,\frac{1}{2}]$,
then $w\in \mathscr{A}_p$. However,
by Proposition \ref{prop:nonempty-exp-class}, we find that
$\mathscr{A}_p=\emptyset$, which contradicts
the existence of such a weight $w$. This completes the proof of
(II), and hence Theorem \ref{thm-Bound-Ap}.
\end{proof}

Using Theorem \ref{thm-Bound-Ap}, we obtain the following
summability characterizations of
the boundedness of $M$ on $L^p(w)$.

\begin{corollary}\label{cor-sum-characterization}
Let $w$ be a weight on $T$. Then the following two statements hold.	
\begin{itemize}
\item[{\rm (i)}] If $p\in(1,\infty)$, then
$M$ is bounded on $L^p(w)$ if and only if
\begin{align}\label{eq-sum-characterization-superone}
\sum_{r\in\mathbb Z_+}\left(
\sum_{\genfrac{}{}{0pt}{}
{j,m\in\mathbb Z_+}{j+m=r}}
\left\{\frac{1}{k^{p(j+m)}}\sup_{x\in T}
w(D_j(x))\left[w^{-\frac{1}{p-1}}(D_m(x))
\right]^{p-1}\right\}^{\frac1p}\right)^{p}<\infty.
\end{align}

\item[{\rm (ii)}] If $p\in(\frac12,1]$,
then $M$ is bounded on $L^p(w)$ if and only if
\begin{align*}
\sum_{j,m\in\mathbb Z_+}\frac{1}{k^{p(j+m)}}
\sup_{x\in T}\left[w(D_j(x))
\max_{y\in D_m(x)}w^{-1}(y)\right]<\infty.
\end{align*}
\end{itemize}
\end{corollary}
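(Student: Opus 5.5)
Writing $\tau_{j,m}:=\|T_{j,m}\|_{L^p(w)\to L^p(w)}$, Lemma \ref{lem-exact-two-layer-norm} says that the quantity inside the braces in \eqref{eq-sum-characterization-superone} is exactly $\tau_{j,m}^p$, and likewise for the summand in (ii). So the plan is to show that $M$ is bounded on $L^p(w)$ if and only if
\begin{align*}
\sum_{r\in\mathbb Z_+}\Big(\sum_{j+m=r}\tau_{j,m}\Big)^p<\infty \quad\text{when } p\in(1,\infty),
\qquad
\sum_{j,m\in\mathbb Z_+}\tau_{j,m}^p<\infty \quad\text{when } p\in(\tfrac12,1].
\end{align*}

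\emph{Necessity.} Assume $M$ is bounded on $L^p(w)$ with $p\in(\frac12,\infty)$. By Theorem \ref{thm-Bound-Ap}, $w\in\mathscr A_{p,\varepsilon}$ for some $\varepsilon>0$. Then \eqref{eq-characteristic-operator-form} gives
\begin{align*}
\tau_{j,m}^p\le [w]_{\mathscr A_{p,\varepsilon}}\,k^{-\varepsilon(j+m)}.
\end{align*}
For $p>1$ this yields $\sum_{j+m=r}\tau_{j,m}\le (r+1)[w]^{1/p}k^{-\varepsilon r/p}$. Raising to the power $p$ gives $(r+1)^p k^{-\varepsilon r}$, which is summable in $r$. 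For $p\le1$, the double sum $\sum_{j,m}k^{-\varepsilon(j+m)}$ is finite directly.

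\emph{Sufficiency.} Here I would avoid the $\mathscr A_p$ class and argue as in the step (iii)$\Rightarrow$(i) of Theorem \ref{thm-Bound-Ap}. By \eqref{eq-equivalent-M} and \eqref{eq-M-supAr},
\begin{align*}
\|Mf\|_{L^p(w)}^p\le\sum_{r\in\mathbb Z_+}\|A_r^\circ f\|_{L^p(w)}^p .
\end{align*}
Lemma \ref{lem:strong-pointwise} bounds $A_r^\circ f$ pointwise by $\sum_{j+m=r}T_{j,m}f$.
\begin{itemize}
\item For $p>1$, Minkowski's inequality gives $\|A_r^\circ f\|_{L^p(w)}\lesssim\big(\sum_{j+m=r}\tau_{j,m}\big)\|f\|_{L^p(w)}$. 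Summing the $p$-th powers over $r$ and using \eqref{eq-sum-characterization-superone} gives boundedness.
\item For $p\le1$, apply \eqref{eq-embeding-l1} pointwise and then Tonelli's theorem. This gives $\|A_r^\circ f\|_{L^p(w)}^p\lesssim\sum_{j+m=r}\tau_{j,m}^p\|f\|_{L^p(w)}^p$. Summing over $r$ reproduces the double sum in (ii).
\end{itemize}

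The only point needing care is checking that the summands match \eqref{eq-exact-two-layer-norm} exactly, including the $k^{p(j+m)}$ normalization and the $\frac1p$ power inside the $r$-sum for $p>1$. Beyond that, the argument is a direct combination of earlier results and I expect no real obstacle. The implicit constants coming from \eqref{eq-sphere-ball-growth} and \eqref{eq-equivalent-M} are absolute.
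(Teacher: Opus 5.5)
Your proposal is correct and follows the paper's proof essentially step for step. You identify the summands with $\|T_{j,m}\|_{L^p(w)\to L^p(w)}^p$ via Lemma \ref{lem-exact-two-layer-norm}, obtain necessity from Theorem \ref{thm-Bound-Ap} and the bound $\|T_{j,m}\|_{L^p(w)\to L^p(w)}^p\le[w]_{\mathscr A_{p,\varepsilon}}k^{-\varepsilon(j+m)}$, and obtain sufficiency from Lemma \ref{lem:strong-pointwise}, Minkowski's inequality (or \eqref{eq-embeding-l1} when $p\le 1$), and \eqref{eq-M-supAr}; the only difference is that you write out the case $p\in(\frac12,1]$, which the paper leaves as a routine modification.
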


\begin{proof}
We first prove (i). To this end, by Lemma \ref{lem-exact-two-layer-norm}, we find that
\eqref{eq-sum-characterization-superone} is equivalent to
\begin{align}\label{eq-sum-T-superone}
\Gamma:=\sum_{r\in\mathbb Z_+}\left[
\sum_{\genfrac{}{}{0pt}{}
{j,m\in\mathbb Z_+}{j+m=r}}
\|T_{j,m}\|_{L^p(w)\to L^p(w)}
\right]^p<\infty.
\end{align}
To show the necessity of (i),
suppose that $M$ is bounded on $L^p(w)$.
From Theorem \ref{thm-Bound-Ap}, we infer that there exists some
$\varepsilon\in(0,\infty)$ such that $w\in\mathscr A_{p,\varepsilon}$,
which, together with Lemma \ref{lem-exact-two-layer-norm},
further implies that, for any $j,m\in\mathbb{Z}_+$,
\begin{align*}
\|T_{j,m}\|_{L^p(w)\to L^p(w)}^p
\lesssim k^{-\varepsilon(j+m)}.
\end{align*}
Inserting this estimate into \eqref{eq-sum-T-superone},
we conclude that
\begin{align*}
\Gamma\lesssim\sum_{r\in\mathbb Z_+}(r+1)^p
k^{-\varepsilon r}<\infty
\end{align*}
and hence \eqref{eq-sum-T-superone} holds.
This completes the proof of the necessity of (i).
We next prove the sufficiency of (i).
For this purpose, we assume that \eqref{eq-sum-T-superone} holds.
By Lemma \ref{lem:strong-pointwise} and
Minkowski's inequality, we find that, for any
$r\in\mathbb{Z}_+$ and $f\in L^p(w)$,
\begin{align*}
\left\|A_r^\circ f\right\|_{L^p(w)}
\lesssim\sum_{\genfrac{}{}{0pt}{}
{j,m\in\mathbb Z_+}{j+m=r}}
\|T_{j,m}\|_{L^p(w)\to L^p(w)}
\|f\|_{L^p(w)}.
\end{align*}
Using \eqref{eq-equivalent-M},
\eqref{eq-M-supAr}, and Tonelli's theorem, we conclude that,
for any $f\in L^p(w)$,
\begin{align*}
\|Mf\|_{L^p(w)}^p\le\|M^\circ f\|_{L^p(w)}^p
\le\sum_{r\in\mathbb Z_+}\|A_r^\circ f\|_{L^p(w)}^p
\lesssim\|f\|_{L^p(w)}^p,
\end{align*}
where the last inequality follows from
\eqref{eq-sum-T-superone}. Thus,
$M$ is bounded on $L^p(w)$, which completes the proof of (i).

To show (ii), it suffices to repeat the above argument
with some slight modifications such as
Minkowski's inequality replaced by \eqref{eq-embeding-l1}; we omit the details.
This completes the proof of Corollary \ref{cor-sum-characterization}.
\end{proof}

As a direct corollary of Theorem \ref{thm-Bound-Ap}
and Proposition \ref{prop:exp-class-structure},
we have the following result.

\begin{corollary}
Let $p\in(\frac{1}{2},\infty)$ and $w$ be a weight on $T$.
If $M$ is bounded on $L^p(w)$, then there exists $\delta\in(\frac{1}{2},p)$
such that, for any $q\in[\delta,\infty)$,
$M$ is bounded on $L^q(w)$.
\end{corollary}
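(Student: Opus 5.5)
The plan is to chain Theorem \ref{thm-Bound-Ap} with the open property in Proposition \ref{prop:exp-class-structure}. Assume $M$ is bounded on $L^p(w)$ with $p\in(\frac12,\infty)$. By the implication (i) $\Longrightarrow$ (ii) of Theorem \ref{thm-Bound-Ap}(I), we get $w\in\mathscr{A}_p$. Proposition \ref{prop:exp-class-structure}(ii) then gives
\begin{align*}
\mathscr{A}_p=\bigcup_{r\in(\frac12,p)}\mathscr{A}_r,
\end{align*}
so there is some $\delta\in(\frac12,p)$ with $w\in\mathscr{A}_\delta$.

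Next, fix any $q\in[\delta,\infty)$. If $q=\delta$ we already have $w\in\mathscr{A}_q$. If $q>\delta$, the monotonicity in Proposition \ref{prop:exp-class-structure}(i) gives $\mathscr{A}_\delta\subset\mathscr{A}_q$, so again $w\in\mathscr{A}_q$. In both cases $q\geq\delta>\frac12$, so we may apply the implication (ii) $\Longrightarrow$ (i) of Theorem \ref{thm-Bound-Ap}(I) with index $q$. This shows that $M$ is bounded on $L^q(w)$, which is the claim.

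None of these steps is a real obstacle; the argument is pure bookkeeping. The only points to check are that $\delta$ is chosen strictly greater than $\frac12$, so that Theorem \ref{thm-Bound-Ap}(I) applies at every $q\ge\delta$, and that the endpoint $q=\delta$ is covered by the trivial inclusion rather than by the strict one.
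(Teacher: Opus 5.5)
Your proposal is correct and follows the paper's intended argument: the paper derives this corollary directly from Theorem \ref{thm-Bound-Ap} together with the open property and monotonicity of Proposition \ref{prop:exp-class-structure}, chained exactly as you do. Your separate treatment of the endpoint $q=\delta$ and your check that $\delta>\frac12$ supply the only details the paper leaves implicit.
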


Finally, we prove Corollary \ref{cor-exp-quantitative}.

\begin{proof}[Proof of Corollary \ref{cor-exp-quantitative}]
Since $w\in \mathscr{A}_{p,\varepsilon}$,
from \eqref{eq-Mf-f} and Lemma \ref{lem:exp-class-synthesis},
we deduce that, for any $f\in L^p(w)$,
\begin{align}\label{eq-Mf-quan}
\left\|M f\right\|_{L^p(w)}^p
&\leq\sum_{r\in\mathbb Z_+}\left\|A_r^\circ f\right\|_{L^p(w)}^p\nonumber\\
&\lesssim [w]_{\mathscr{A}_{p,\varepsilon}}\sum_{r\in\mathbb Z_+}
(r+1)^{\max\{1,p\}}\frac{1}{k^{\varepsilon r}}\|f\|_{L^p(w)}^p\nonumber\\
&\lesssim[w]_{\mathscr{A}_{p,\varepsilon}}
\left(1-\frac{1}{k^{\varepsilon}}
\right)^{-\max\{1,p\}-1}\|f\|_{L^p(w)}^p,
\end{align}
where, in the last inequality, we used the basic inequality
$\sum_{r\in\mathbb{Z}_+}(r+1)^a q^r
\lesssim \frac{1}{(1-q)^{a+1}}$ for any $a\in(0,\infty)$ and $q\in(0,1)$ with the implicit positive
constant depending only on $a$. Note that $k\geq 2$.
Taking the $p$-th root of both sides of \eqref{eq-Mf-quan},
we obtain \eqref{eq-exp-quantitative}.

Finally, we prove the sharpness of the exponent $\frac{1}{p}$
of the weight constant. To this end, let
\begin{align*}
\beta:=\frac{\max\{p,1\}}2-1
\end{align*}
and $w_\beta$ be as in \eqref{eq-def-radial-beta}.
It follows from \eqref{eq-radial-constant} and  \eqref{eq-rangeradial-p>1} that
$w_\beta\in\mathscr{A}_{p,\varepsilon}$ and
$[w_\beta]_{\mathscr{A}_{p,\varepsilon}}=1$.
Let $\delta\in (0,1)$. Define the weight
$u_\delta$ on $T$ by setting
$u_\delta(o):=\delta$ and, for any $x\in T\setminus\{o\}$,
$u_\delta(x):=w_\beta(x)$.
Since the only descendant layer containing $o$ is $D_0(o)$,
by the definition of $u_\delta$ and \eqref{eq-radial-constant},
we find that $[u_\delta]_{\mathscr{A}_{p,\varepsilon}}
\sim\frac{1}{\delta}$, where the positive equivalence constants
are independent of $\delta$.
Let $f:=\mathbf1_{\{o\}}$. Observe that,
for any $x\in D_1(o)$, $o\in B(x,1)$ and hence
$Mf(x)\geq \frac{1}{k+2}$. Moreover, by the definition of
$u_\delta$, we find that, for any $x\in D_1(o)$,
$u_\delta(x)=w_\beta(x)=k^\beta$.
Using these observations, we conclude that
\begin{align}\label{eq-2}
\|M\|_{L^p(u_\delta)\to L^p(u_\delta)}
\geq\frac{\|Mf\|_{L^p(u_\delta)}}{\|f\|_{L^p(u_\delta)}}
\gtrsim\frac{1}{\delta^{\frac{1}{p}}},
\end{align}
where the implicit positive constant is independent of $\delta$.
If the exponent $\frac{1}{p}$ in \eqref{eq-exp-quantitative}
can be improved to a strictly smaller one, then
letting $\delta\to0$ in \eqref{eq-2} will lead to a contradiction.
Therefore, the exponent $\frac{1}{p}$ in \eqref{eq-exp-quantitative}
is optimal. This completes the proof of Corollary \ref{cor-exp-quantitative}.
\end{proof}

\subsection{Proofs of Theorems \ref{thm-strong} and \ref{thm-atomic-characterization}}\label{sub3-2}

In this subsection, we aim to prove
Theorems \ref{thm-strong} and \ref{thm-atomic-characterization}.
To this end, in the next lemma we show that the global Sawyer-type testing
condition \eqref{eq-global-ball-testing}
guarantees exponential decay of the operator norms
of $\{P_m\}_{m\in\mathbb{Z}_+}$
and $\{Q_m\}_{m\in\mathbb{Z}_+}$ on $L^p(w)$,
which is the key point used in the proof of
Theorem \ref{thm-strong}.

\begin{lemma}\label{lem-global-testing-decay}
Let $p\in(1,\infty)$, and let $w$ be a weight on $T$.
If \eqref{eq-global-ball-testing} holds,
then there exist positive
constants $C$ and $\theta_P,\theta_Q\in(0,1)$ such that,
for any $m\in\mathbb Z_+$ and $f\in L^p(w)$,
\begin{align}\label{eq-Pm-exp}
\|P_mf\|_{L^p(w)}
\leq C\theta_P^m\|f\|_{L^p(w)}
\end{align}
and
\begin{align}\label{eq-Qm-exp}
\|Q_mf\|_{L^p(w)}
\leq C\theta_Q^m\|f\|_{L^p(w)}.
\end{align}
Moreover, there exists a positive constant $\varepsilon$ such
that \eqref{eq-Ar-exponential} holds.
\end{lemma}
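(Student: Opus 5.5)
The plan is to turn the global testing condition \eqref{eq-global-ball-testing} into the summability hypothesis \eqref{eq-directional-summability} of Lemma \ref{lem-directional-decay}: once for $\{Q_m\}$ on $L^p(w)$, and once, by duality, for $\{P_m\}$. The semigroup argument of that lemma then gives \eqref{eq-Pm-exp} and \eqref{eq-Qm-exp}. The last assertion then follows exactly as in the proof of (i) $\Longrightarrow$ (ii) $\Longrightarrow$ (iii) of Theorem \ref{thm-Bound-Ap}. Indeed, $\|T_{j,m}\|\le\|Q_j\|\|P_m\|\lesssim\rho^{j+m}$ with $\rho:=\max\{\theta_P,\theta_Q\}$, so \eqref{eq-characteristic-operator-form} gives $w\in\mathscr A_{p,\varepsilon}$ for small $\varepsilon$, and Lemma \ref{lem:exp-class-synthesis} gives \eqref{eq-Ar-exponential}.

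\emph{Step 1: the $Q$-direction.} Test \eqref{eq-global-ball-testing} on the degenerate ball $B=B(y,0)=\{y\}$. Since $M(\sigma\mathbf 1_{\{y\}})(x)\gtrsim k^{-d(x,y)}\sigma(y)$ (use radius $d(x,y)$ and \eqref{eq-sphere-ball-growth}), we get
\begin{align*}
\sum_{x\in T}\frac{w(x)}{k^{pd(x,y)}}\lesssim\sigma(y)^{1-p}=w(y)
\end{align*}
uniformly in $y$. This is the point-testing quantity \eqref{eq-test-onepoint}. By a change of variables,
\begin{align*}
\sum_j\|Q_jf\|_{L^p(w)}^p=\sum_y|f(y)|^p\sum_jk^{-pj}w(D_j(y)).
\end{align*}
Since $D_j(y)\subset S(y,j)$, the inner sum is $\lesssim w(y)$. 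Hence \eqref{eq-directional-summability} holds for $\{Q_j\}$, and Lemma \ref{lem-directional-decay} gives \eqref{eq-Qm-exp}.

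\emph{Step 2: the $P$-direction via duality.} With respect to counting measure the adjoint of $P_m$ (on nonnegative functions) is $Q_m$: indeed $\sum_xP_mf(x)g(x)=\sum_yf(y)Q_mg(y)$. Moreover $L^p(w)^*=L^{p'}(\sigma)$ under this pairing, so $\|P_m\|_{L^p(w)\to L^p(w)}=\|Q_m\|_{L^{p'}(\sigma)\to L^{p'}(\sigma)}$. The semigroup property persists, so by Lemma \ref{lem-directional-decay} (with $p'$, $\sigma$) it suffices to prove
\begin{align*}
\sum_jk^{-p'j}\sigma(D_j(y))\lesssim\sigma(y)\quad\text{uniformly in }y.
\end{align*}
To extract this from \eqref{eq-global-ball-testing}, I would test on $B=B(y,n)$ and use only the points $z$ in the ancestor chain and the descendant cones of $y$. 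For $z\in D_i(y)$ and each $j\le n$, the layer $D_j(y)$ lies in a sphere about $z$ of radius at most $i+j$. This gives lower bounds of the form
\begin{align*}
\sum_iw(D_i(y))k^{-p(i+j)}\sigma(D_j(y))^p\lesssim\sigma(B(y,n)).
\end{align*}

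\emph{Main obstacle.} The difficulty is that the right-hand side $\sigma(B(y,n))$ contains mass outside the cone of $y$ (ancestors and side branches), so these bounds do not immediately isolate $\sigma(D_j(y))$. My first attempt would be an induction or bootstrapping on $n$, combined with Step 1, i.e.\ the inequality $w(y)\sigma(y)^{p-1}=1$ together with the reverse H\"older-type control from $\sum_x w(x)k^{-pd(x,y)}\lesssim w(y)$. The aim is to absorb the off-cone contribution, or to reduce it to balls centered at the furthest ancestor, where the off-cone part is itself a union of cones of siblings already controlled. If that fails, I would instead aim directly at the decay of $\sup_x k^{-pm}w(x)\sigma(D_m(x))^{p-1}=\|P_m\|^p$ by testing with $B(x,m)$ and summing over the whole descendant cone of $x$.
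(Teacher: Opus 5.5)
Your Step 1 is exactly the paper's argument for \eqref{eq-Qm-exp}: test on $B=\{y\}$, then apply Lemma \ref{lem-directional-decay}. Deducing \eqref{eq-Ar-exponential} from the two decay bounds via $T_{j,m}=Q_j\circ P_m$ is also fine.

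The genuine gap is Step 2. This is the substantive part of the lemma, and the proposal does not prove it. The duality is correct, but it only restates the problem. Put $\alpha_j(y):=w(y)[\sigma(D_j(y))]^{p-1}k^{-pj}$. Then $k^{-p'j}\sigma(D_j(y))=\sigma(y)[\alpha_j(y)]^{p'-1}$, so your target is $\sum_{j}[\alpha_j(y)]^{p'-1}\lesssim 1$ uniformly in $y$. That is a \emph{summable} decay of the very quantities whose supremum over $y$ is $\|P_j\|^p_{L^p(w)\to L^p(w)}$ (Lemma \ref{lem-exact-two-layer-norm} applied to $T_{0,j}=P_j$), so it is no easier than \eqref{eq-Pm-exp} itself. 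Your lower bounds from testing on $B(y,n)$ are true. But, as you note, the off-cone mass in $\sigma(B(y,n))$ blocks them, and ``induction or bootstrapping on $n$'' is not a mechanism for removing it.

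The missing idea, which your last fallback only gestures at, is an energy count over the $m+1$ intermediate layers. Fix $x$ and $m$, and put $R:=D_m(x)$ and $B_m:=B(x,m)$.

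(a) Test on $B(x,j)$ and evaluate $M(\sigma\mathbf 1_{B(x,j)})$ only at the center $x$. This gives $\alpha_j(x)\le w(x)[\sigma(B(x,j))]^{p-1}k^{-pj}\lesssim\lambda$ for every $j$. For $j=m$ the same bound yields $\sigma(B_m)/\sigma(R)\lesssim[\lambda/\alpha_m(x)]^{1/(p-1)}$. This is how the off-cone mass is handled: you never isolate $\sigma(D_j)$; the off-cone mass can dominate $\sigma(R)$ only when $\alpha_m(x)$ is already small.

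(b) For $0\le j\le m$, the function $g_j:=P_{m-j}(\sigma\mathbf 1_R)$ satisfies $g_j\lesssim M(\sigma\mathbf 1_{B_m})$ on $D_j(x)$. These layers are disjoint, so $\sum_{j=0}^m\|g_j\mathbf 1_{D_j(x)}\|^p_{L^p(w)}\lesssim\lambda\,\sigma(B_m)$.

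(c) Since $P_jg_j(x)=k^{-m}\sigma(R)$, H\"older's inequality gives $\|g_j\mathbf 1_{D_j(x)}\|^p_{L^p(w)}\ge\alpha_m(x)\sigma(R)/\alpha_j(x)\gtrsim\alpha_m(x)\sigma(R)/\lambda$ for every $j$.

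Combining (a)--(c) gives $(m+1)[\alpha_m(x)]^{p'}\lesssim\lambda^{p'+1}$, so $\mathscr P_m:=\sup_x\alpha_m(x)\to 0$. This rate $(m+1)^{-1/p'}$ becomes $(m+1)^{-1/p}$ after raising to the power $p'-1$, which is not summable. So it would not close your dual summability route even once found. The paper instead uses $P_\ell P_m=P_{\ell+m}$, hence $\mathscr P_{\ell+m}\le\mathscr P_\ell\mathscr P_m$, to turn $\mathscr P_{m_0}<1$ into geometric decay. That gives \eqref{eq-Pm-exp} directly, without Lemma \ref{lem-directional-decay}.
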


\begin{proof}
Let
\begin{align}\label{eq-lambda}
\lambda:=\sup_{B\subset T}\frac{\|M(\sigma\mathbf{1}_B)
\|_{L^p(w)}^p}{\sigma(B)}\in[1,\infty),
\end{align}
where the supremum is taken over all balls $B\subset T$
and $\sigma$ is as in \eqref{eq-def-dualweight}.
We first prove \eqref{eq-Pm-exp}.
For any $m\in\mathbb Z_+$ and $x\in T$, let
\begin{align}\label{eq-def-alpham}
\alpha_m(x):=
\frac{w(x)[\sigma(D_m(x))]^{p-1}}{k^{pm}}
\end{align}
and
\begin{align*}
\mathscr{P}_m:=\sup_{x\in T}\alpha_m(x).
\end{align*}
By Lemma \ref{lem-exact-two-layer-norm} with $j=0$,
we find that, for any $m\in\mathbb{Z}_+$,
\begin{align}\label{eq-Pm-exact-norm}
\|P_m\|_{L^p(w)\to L^p(w)}^p=\mathscr{P}_m.
\end{align}

We next show that $\mathscr{P}_m\to 0$ as $m\to\infty$.
To this end, we first establish a uniform estimate for $\alpha_m(x)$.
Fix $m\in\mathbb Z_+$ and $x\in T$ and let $B_m:=B(x,m)$.
Taking the ball $B$ in the definition \eqref{eq-def-M}
of $M$ to be $B_m$, we obtain
$$
M(\sigma\mathbf{1}_{B_m})(x)
\geq\frac{\sigma(B_m)}{|B_m|}.
$$
It follows from \eqref{eq-global-ball-testing},
\eqref{eq-sphere-ball-growth}, and \eqref{eq-lambda} that
\begin{align*}
\frac{w(x)[\sigma(B_m)]^{p-1}}{k^{pm}}
\lesssim\frac{\|M(\sigma\mathbf{1}_{B_m})
\|_{L^p(w)}^p}{\sigma(B_m)}\leq\lambda.
\end{align*}
Using $D_m(x)\subset B_m$, we conclude that
\begin{align}\label{eq-alpha-uniform}
\alpha_m(x)\leq\frac{w(x)[\sigma(B_m)]^{p-1}}{k^{pm}}
\lesssim\lambda,
\end{align}
which is the desired uniform estimate for $\alpha_m(x)$.
Let $R:=D_m(x)$.
For any $j\in\{0,\ldots,m\}$, let $n:=m-j$ and
$g_j:=P_n(\sigma\mathbf{1}_R)$.
If $y\in D_j(x)$, then $D_n(y)\subset R\subset B_m$.
By the definitions of $M$ and $P_n$ and by \eqref{eq-sphere-ball-growth},
we find that, for any $y\in D_j(x)$,
$$
g_j(y)=\frac{\sigma(D_n(y))}{k^n}
\lesssim\frac{\sigma(D_n(y))}{|B(y,n)|}
\leq M(\sigma\mathbf{1}_{B_m})(y).
$$
Applying the fact that the sets $\{D_0(x),\ldots,D_m(x)\}$ are pairwise disjoint
and applying the testing condition \eqref{eq-global-ball-testing} yield
\begin{align}\label{eq-energy-upper}
\sum_{j=0}^m\left\|g_j\mathbf{1}_{D_j(x)}\right\|^p_{L^p(w)}
&\lesssim\sum_{j=0}^m\sum_{y\in D_j(x)}
\left[M(\sigma\mathbf{1}_{B_m})(y)\right]^p w(y)\nonumber\\
&\leq\|M(\sigma\mathbf{1}_{B_m})\|_{L^p(w)}^p
\leq\lambda\sigma(B_m).
\end{align}
From the fact that the sets
$\{D_n(y):\,y\in D_j(x)\}$ form a disjoint partition of $R$
and that $j+n=m$, we infer that
$$P_jg_j(x)=P_jP_n(\sigma\mathbf{1}_R)(x)
=P_m(\sigma\mathbf{1}_R)(x)
=\frac{\sigma(R)}{k^m}$$
and hence
$$
w(x)|P_jg_j(x)|^p=
w(x)\left[\frac{\sigma(R)}{k^m}\right]^p
=\alpha_m(x)\sigma(R).
$$
This, together with H\"{o}lder's inequality, further implies that
\begin{align*}
\alpha_m(x)\sigma(R)=w(x)|P_jg_j(x)|^p\leq\alpha_j(x)
\sum_{y\in D_j(x)}|g_j(y)|^p w(y)
=\alpha_j(x)\left\|g_j\mathbf{1}_{D_j(x)}
\right\|^p_{L^p(w)}.
\end{align*}
Note that it was proved in \eqref{eq-alpha-uniform} that
$\alpha_j(x)$ is uniformly bounded by $\lambda$.
Inserting this bound into the last inequality and summing
$j$ from $0$ to $m$, we obtain
\begin{align}\label{eq-energy-lower}
(m+1)\frac{\alpha_m(x)}{\lambda}\sigma(R)
\lesssim\sum_{j=0}^m \left\|g_j
\mathbf{1}_{D_j(x)}\right\|^p_{L^p(w)}
\lesssim\lambda\sigma(B_m),
\end{align}
where, in the last step, we used \eqref{eq-energy-upper}.
By $\sigma(R)\neq0$, combining \eqref{eq-alpha-uniform},
\eqref{eq-energy-lower}, and the definition
\eqref{eq-def-alpham} of $\alpha_m(x)$, we conclude that
$$
\frac{m+1}{\lambda}\alpha_m(x)\lesssim\lambda
\left[\frac{\lambda}{\alpha_m(x)}\right]^{\frac{1}{p-1}},
$$
which is equivalent to
$$
(m+1)[\alpha_m(x)]^{p'}\lesssim\lambda^{p'+1}.
$$
Consequently,
\begin{align}\label{eq-decay-Am}
\mathscr{P}_m\lesssim\lambda^{1+\frac{1}{p'}}
\frac{1}{(m+1)^{\frac{1}{p'}}}.
\end{align}
Note that $\mathscr{P}_m\to0$ as $m\to\infty$.
From \eqref{eq-def-Pm}, we deuce that, for any
$\ell,m\in\mathbb{Z}_+$ and $f\in L^p(w)$,
$P_\ell(P_mf)=P_{\ell+m}f$,
which, combined with \eqref{eq-Pm-exact-norm},
further implies that $\mathscr{P}_{\ell+m}
\leq\mathscr{P}_\ell\mathscr{P}_m$.
By \eqref{eq-decay-Am}, we can
choose $m_0\in\mathbb N$ such that
$\mathscr{P}_{m_0}\in(0,1)$.
Observe that, for any $m\in\mathbb{Z}_+$,
$m=qm_0+r$, where $q\in\mathbb{Z}_+$ and
$r\in[0, m_0)\cap\mathbb{Z}_+$.
It immediately follows that
\begin{align*}
\left\|P_m\right\|_{L^p(w)\to L^p(w)}
\leq\left\|P_{m_0}\right\|_{L^p(w)\to L^p(w)}^q
\left\|P_r\right\|_{L^p(w)\to L^p(w)}\lesssim
\mathscr{P}^\frac{m}{p m_0}_{m_0}.
\end{align*}
Thus, \eqref{eq-Pm-exp} holds with
$\theta_P:=\mathscr{P}^\frac{1}{p m_0}_{m_0}
\in(0,1)$. This completes the proof of \eqref{eq-Pm-exp}.

We next prove \eqref{eq-Qm-exp}.  Fix $z\in T$.
Observe that, for any $m\in\mathbb{Z}_+$ and $x\in D_m(z)$,
$z\in B(x,m)$ and hence
$$
M(\sigma\mathbf{1}_{\{z\}})(x)
\gtrsim \frac{1}{k^m}\sigma(z).
$$
It follows from the disjointness of the sets
$\{D_m(z)\}_{m\in\mathbb{Z}_+}$ that
\begin{align*}
\sigma(z)^p\sum_{m\in\mathbb Z_+}\frac{1}{k^{pm}}w(D_m(z))
\lesssim\|M(\sigma\mathbf{1}_{\{z\}})\|_{L^p(w)}^p
\leq\lambda \sigma(z).
\end{align*}
Since $[\sigma(z)]^{1-p}=w(z)$, we deduce that
\begin{align*}
\sum_{m\in\mathbb Z_+}\frac{1}{k^{pm}}w(D_m(z))
\lesssim\lambda w(z).
\end{align*}
By Tonelli's theorem and \eqref{eq-def-Qm},
we find that, for any $f\in L^p(w)$,
\begin{align*}
\sum_{m\in\mathbb Z_+}\|Q_mf\|_{L^p(w)}^p
=\sum_{z\in T}|f(z)|^p
\sum_{m\in\mathbb Z_+}\frac{1}{k^{pm}}w(D_m(z))
\lesssim\lambda\|f\|_{L^p(w)}^p.
\end{align*}
From Lemma \ref{lem-directional-decay}, we infer that
\eqref{eq-Qm-exp} holds for some $\theta_Q\in(0,1)$.
This completes the proof of \eqref{eq-Qm-exp}.

Finally, let $\rho:=\max\{\theta_P,\theta_Q\}\in(0,1)$.
Using Lemma \ref{lem:strong-pointwise}, \eqref{eq-Pm-exp}, and
\eqref{eq-Qm-exp}, we conclude that, for any $r\in\mathbb{Z}_+$
and $f\in L^p(w)$,
\begin{align}\label{eq-Ar=PmQj}
\left\|A_r^\circ f\right\|_{L^p(w)}
&\lesssim\sum_{\genfrac{}{}{0pt}{}{j,m\in\mathbb Z_+}{j+m=r}}
\left\|Q_j(P_mf)\right\|_{L^p(w)}\nonumber\\
&\lesssim\sum_{\genfrac{}{}{0pt}{}{j,m\in\mathbb Z_+}{j+m=r}}
\theta_Q^j\theta_P^m\|f\|_{L^p(w)}
\leq (r+1)\rho^r\|f\|_{L^p(w)}.
\end{align}
Choose $\widetilde\rho\in(\rho,1)$.  Since
$(r+1)\rho^r\lesssim\widetilde\rho^r$,
by writing $\widetilde\rho=\frac{1}{k^{\varepsilon}}$
for some $\varepsilon\in(0,\infty)$,
we obtain \eqref{eq-Ar-exponential}.
This completes the proof of Lemma
\ref{lem-global-testing-decay}.
\end{proof}

We now give the proof of Theorem \ref{thm-strong}.

\begin{proof}[Proof of Theorem \ref{thm-strong}]
By Theorem \ref{thm-Bound-Ap}, we find that, to
prove this theorem, it suffices to
show that (i), (ii), and (iii) of this theorem are mutually
equivalent. For this purpose, we first prove  the implication
(i) $\Longrightarrow$ (ii).
Suppose that (i) holds.  For any ball $B\subset T$,
it follows immediately from the assumed boundedness of $M$ that
\begin{align*}
\left\|M\left(\sigma\mathbf{1}_B\right)\right\|_{L^p(w)}^p
&\lesssim\left\|\sigma\mathbf{1}_B\right\|_{L^p(w)}^p
=\sum_{x\in B}[\sigma(x)]^pw(x)=\sigma(B)
\end{align*}
and hence (ii) holds. This completes the proof of  the implication
(i) $\Longrightarrow$ (ii).

To show  the implication (ii) $\Longrightarrow$ (iii),
assume that (ii) holds. By Lemma
\ref{lem-global-testing-decay}, there exists $\varepsilon\in(0,\infty)$
such that \eqref{eq-Ar-exponential} holds.
Since $k\geq 2$,
we may assume that $\varepsilon\in(0,1)$. Note that,
for any $E,F\subset T$ and $r\in\mathbb Z_+$,
\begin{align*}
(\mathbf{1}\otimes w)\left(I_r(E,F)\right)
&=\sum_{y\in F}w(y)\sum_{\genfrac{}{}{0pt}{}{x\in E}{d(x,y)=r}}1
=\sum_{y\in F}|S(y,r)|A_r^\circ(\mathbf{1}_E)(y)w(y)\\
&\sim k^r\sum_{y\in T}A_r^\circ(\mathbf{1}_E)(y)\mathbf{1}_F(y)w(y),
\end{align*}
where $I_r(E,F)$ is as in \eqref{eq-r-incidence-set}.
From H\"older's inequality and \eqref{eq-Ar-exponential},
we deduce that
\begin{align*}
(\mathbf{1}\otimes w)\left(I_r(E,F)\right)
&\lesssim k^r\left\|A_r^\circ(\mathbf{1}_E)
\right\|_{L^p(w)}\|\mathbf{1}_F\|_{L^{p'}(w)}\\
&\lesssim k^{(1-\varepsilon)r}\left[w(E)\right]^{\frac{1}{p}}
\left[w(F)\right]^{1-\frac{1}{p}}.
\end{align*}
Thus, \eqref{eq-MA-alpha-one} holds with $\beta:=1-\varepsilon
\in(0,1)$, which completes the proof of the implication
(ii) $\Longrightarrow$ (iii).

If \eqref{eq-MA-alpha-one} holds,
applying \cite[Theorem 1.1]{or22} with $\alpha:=1$, we conclude that
$M$ is bounded on $L^p(w)$. This completes the proof of
the implication (iii) $\Longrightarrow$ (i), and hence Theorem \ref{thm-strong}.
\end{proof}

Finally, we prove Theorem \ref{thm-atomic-characterization}.

\begin{proof}[Proof of Theorem \ref{thm-atomic-characterization}]
By Theorem \ref{thm-Bound-Ap}, we find that, to
prove this theorem, it suffices to
show that $M$ is bounded on $L^p(w)$ if and only if
\eqref{eq-test-onepoint} holds. To this end,
we first prove the necessity. Using \eqref{eq-sphere-ball-growth},
we conclude that, for any $x,y\in T$,
\begin{align}\label{eq-test-M-onepoint}
M(\mathbf1_{\{y\}})(x)\sim\frac{1}{k^{d(x,y)}} .
\end{align}
It immediately follows from the boundedness of
$M$ on $L^p(w)$ that, for any $y\in T$,
\begin{align*}
\sum_{x\in T}\frac{w(x)}{k^{p d(x,y)}}\sim
\left\|M(\mathbf1_{\{y\}})\right\|_{L^p(w)}^p
\lesssim\left\|\mathbf1_{\{y\}}\right\|_{L^p(w)}^p=w(y),
\end{align*}
which further implies that \eqref{eq-test-onepoint} holds.
This completes the proof of the necessity.

To show the sufficiency, by \eqref{eq-sphere-ball-growth},
we find that, for any $f\in L^p(w)$ and $x\in T$,
\begin{align}\label{eq-M-kernel}
Mf(x)=\sup_{r\in\mathbb{Z}_+}\frac{1}{|B(x,r)|}
\sum_{y\in B(x,r)}\left|f(y)\right|
\lesssim\sum_{y\in T}\frac{1}{k^{d(x,y)}}\left|f(y)\right|.
\end{align}
Using this, \eqref{eq-embeding-l1}, \eqref{eq-test-onepoint},
and Tonelli's theorem, we conclude that, for any $f\in L^p(w)$,
\begin{align*}
\|Mf\|_{L^p(w)}^p&=\sum_{x\in T}|Mf(x)|^p w(x)
\lesssim\sum_{x\in T}\sum_{y\in T}\frac{1}{k^{pd(x,y)}}\left|f(y)\right|^p w(x)\\
&=\sum_{y\in T}\sum_{x\in T}\frac{w(x)}{k^{pd(x,y)}}\left|f(y)\right|^p
\lesssim\sum_{y\in T}|f(y)|^pw(y)=\|f\|_{L^p(w)}^p.
\end{align*}
Thus, $M$ is bounded on $L^p(w)$. This completes the proof of the sufficiency,
and hence Theorem \ref{thm-atomic-characterization}.
\end{proof}

\section{Radial Weights and Their Characterizations}\label{sec:radial}

In this section, for any $p\in(\frac{1}{2}, \infty)$,
we establish the characterizations of the
boundedness of $M$ on $L^p(w)$ and also from $L^p(w)$ to $L^{p,\infty}(w)$
for radial weights $w$.
Moreover, when $p\in(0,\frac{1}{2}]$, we also prove that
there exists no weight $w$ on $T$ such that
$M$ is bounded from $L^p(w)$ to $L^{p,\infty}(w)$.
Furthermore, for the exponential radial weights $w_\beta$
in \eqref{eq-def-radial-beta} we determine the exact ranges
of $\beta$ such that $M$ is bounded on $L^p(w_\beta)$ and
from $L^p(w_\beta)$ to $L^{p,\infty}(w_\beta)$.

In \cite{or22}, Ombrosi and Rivera-R\'{\i}os found some examples
of radial weights to illustrate that the weighted boundedness properties
of $M$ on $T$ differ from their Euclidean space counterparts.
Here, a weight $w$ on $T$ is said to be \emph{radial} if
there exists a sequence $\{a_N\}_{N\in\mathbb{Z}_+}$ in
$(0,\infty)$ such that
$$
w=\sum_{N\in \mathbb{Z}_+}a_N \mathbf{1}_{T_N},
$$
where $T_N$ is as in \eqref{eq-def-TN}.
Suppose that $w:=\sum_{N\in \mathbb{Z}_+}a_N \mathbf{1}_{T_N}$
is a radial weight on $T$. For any $N\in\mathbb{Z}_+$,
let $W_N:=w(T_N)=k^N a_N$.
Recall that, for any weight $w$ on $T$
and any $p\in(0,\infty)$,
the \emph{weighted weak Lebesgue space} $L^{p,\infty}(w)$
on $T$ is defined to be the set
of all functions $f$ on $T$ such that
$$
\|f\|_{L^{p,\infty}(w)}:=\sup_{\lambda\in(0,\infty)}\lambda\,
\left[w\left(\{x\in T:\,|f(x)|>\lambda\}\right)
\right]^{\frac{1}{p}}<\infty.
$$
For radial weights, the following theorem gives the
characterizations of the weighted strong-type and
weak-type boundedness of $M$ in terms of
explicit conditions on $W_N$.
\begin{theorem}\label{thm-bound-radial}
Let $w$ be a radial weight.
\begin{itemize}
\item[{\rm (i)}] If $p\in(\frac{1}{2},\infty)$, then $M$ is bounded on
$L^p(w)$ if and only if
\begin{align}\label{eq-radial-1}
\sup_{N\in\mathbb Z_+}\frac{1}{W_N}
\sum_{j=0}^{N}k^{(1-p)_+j}W_{N-j}<\infty
\end{align}
and
\begin{align}\label{eq-radial-2}
\sup_{N\in\mathbb Z_+}\frac{1}{W_N}
\sum_{m\in\mathbb Z_+}\frac{1}{k^{pm}}W_{N+m}<\infty.
\end{align}

\item[{\rm (ii)}] If $p\in(\frac{1}{2},1)$, then $M$ is bounded from
$L^p(w)$ to $L^{p,\infty}(w)$ if and only if
\begin{align}\label{eq-radial-weak-subone}
\sup_{N,t\in\mathbb Z_+}\frac{1}{k^{pt}W_N}
\sum_{j=0}^{\min\{N,t\}}k^j\sum_{m=0}^{t-j}W_{N-j+m}<\infty.
\end{align}

\item[{\rm (iii)}] If $p\in[1,\infty)$, then $M$ is bounded from
$L^p(w)$ to $L^{p,\infty}(w)$ if and only if
\eqref{eq-radial-1} holds and
\begin{align}\label{eq-radial-3}
\sup_{N,t\in\mathbb Z_+}
\frac{1}{W_Nk^{pt}}\sum_{m=0}^{t}W_{N+m}<\infty.
\end{align}
\end{itemize}
\end{theorem}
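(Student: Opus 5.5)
The plan is to reduce everything to the directional operators $P_m$ and $Q_j$ of Section \ref{sec:strong}, computing their norms explicitly for a radial weight $w=\sum_N a_N\mathbf 1_{T_N}$, with $W_N=k^Na_N$. For $x\in T_N$ we have $w(D_j(x))=k^{-N}W_{N+j}$. If $p>1$, then $[\sigma(D_m(x))]^{p-1}=k^{mp+N}/W_{N+m}$. If $p\le1$, then $\max_{D_m(x)}w^{-1}=k^{N+m}/W_{N+m}$. Lemma \ref{lem-exact-two-layer-norm} then gives the norms of $T_{j,m}$ in closed form. In particular,
\begin{align*}
\|P_m\|^p_{L^p(w)\to L^p(w)}=\sup_{N}\frac{k^{(1-p)_+m}W_N}{W_{N+m}},
\qquad
\|Q_j\|^p_{L^p(w)\to L^p(w)}=\sup_{N}\frac{W_{N+j}}{k^{pj}W_N}.
\end{align*}
Condition \eqref{eq-radial-1} should be a summed version of the $P$-bound, after re-indexing $N\to N-j$. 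Condition \eqref{eq-radial-2} should be the $Q$-summability quantity $\frac{1}{w(z)}\sum_m k^{-pm}w(D_m(z))$ evaluated at $z\in T_N$.

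\textbf{Proof of (i).} For necessity, assume $M$ is bounded on $L^p(w)$.
\begin{itemize}
\item[$\bullet$] Theorem \ref{thm-Bound-Ap} gives $w\in\mathscr A_{p,\varepsilon}$, so by Lemma \ref{lem-exact-two-layer-norm} with $j=0$ we get $\|P_m\|^p\lesssim k^{-\varepsilon m}$. Hence $k^{(1-p)_+m}W_{N-m}/W_N\lesssim k^{-\varepsilon m}$ uniformly in $N$, and summing over $m\le N$ yields \eqref{eq-radial-1}.
\item[$\bullet$] For \eqref{eq-radial-2}, I test $M$ on $\mathbf 1_{\{z\}}$ with $z\in T_N$. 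Since $M(\mathbf 1_{\{z\}})\gtrsim k^{-m}$ on $D_m(z)$ (as in the proof of Theorem \ref{thm-atomic-characterization}), we get $\sum_m k^{-pm}w(D_m(z))\lesssim w(z)$, and both sides are $k^{-N}$ times the corresponding $W$-quantities.
\end{itemize}
For sufficiency, I verify the summability hypothesis \eqref{eq-directional-summability} for each family.
\begin{itemize}
\item[$\bullet$] For $Q$, Tonelli gives exactly $\sum_j\|Q_jf\|^p=\sum_z|f(z)|^p\sum_j k^{-pj}w(D_j(z))$, which is controlled by \eqref{eq-radial-2}.
\item[$\bullet$] For $P$, I use H\"older (if $p>1$) or \eqref{eq-embeding-l1} (if $p\le1$) at each $x=y^{(m)}$, together with radiality. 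This should give $\|P_mf\|^p\le\sum_y|f(y)|^pw(y)\,k^{(1-p)_+m}W_{|y|-m}/W_{|y|}$, and summing over $m\le|y|$ is bounded by \eqref{eq-radial-1}.
\end{itemize}
Lemma \ref{lem-directional-decay} then gives exponential decay of both families. Following the proof of Theorem \ref{thm-Bound-Ap} (via $T_{j,m}=Q_jP_m$, Lemma \ref{lem:strong-pointwise} and \eqref{eq-M-supAr}), this yields boundedness of $M$.

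\textbf{Proofs of (ii) and (iii).} Necessity comes from point masses and radial indicator functions.
\begin{itemize}
\item[$\bullet$] For $y\in T_N$, \eqref{eq-test-M-onepoint} shows that $\{M\mathbf 1_{\{y\}}\gtrsim k^{-t}\}\supset B(y,t)$. The set $B(y,t)$ decomposes into the sets $D_m(y^{(j)})$ with $j\le\min\{N,t\}$ and $m\le t-j$, of weight $k^{-(N-j)}W_{N-j+m}$. The weak-type inequality then gives exactly \eqref{eq-radial-weak-subone}; its restriction $j=0$ gives \eqref{eq-radial-3}.
\item[$\bullet$] For (iii), \eqref{eq-radial-1} should follow by testing $f=\sigma\mathbf 1_{D_m(x)}$ (or $\mathbf 1_{\{y\}}$ when $p=1$). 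On the ancestor $x$ one has $Mf\gtrsim P_mf(x)$, so weak type controls $w(x)\,[\sigma(D_m(x))/k^m]^p\lesssim\sigma(D_m(x))$. This is the $P$-norm bound, which I then sum as in (i).
\end{itemize}
For sufficiency in (ii), I plan to use the kernel bound \eqref{eq-M-kernel}, $Mf\lesssim\sum_y k^{-d(\cdot,y)}|f(y)|$, with radial level-set counting. For sufficiency in (iii), the plan is to split $M$ into an ancestor part, handled strongly via \eqref{eq-radial-1}, and a descendant part, handled weakly via \eqref{eq-radial-3}.

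\textbf{Main obstacle.} The weak-type sufficiency is the hard step, since $L^{p,\infty}$ is not normable for $p<1$ and summation over $r$ fails. For (ii) I would first try to prove a weak-type bound for the positive kernel operator using that for $p<1$ the extremal functions concentrate on points. Concretely, the aim is to show $w(\{Mf>\lambda\})\lesssim\lambda^{-p}\sum_y|f(y)|^pw(y)$ by covering $\{Mf>\lambda\}$ with balls $B(y,t_y)$ where $k^{-t_y}|f(y)|\gtrsim\lambda$, after a dyadic pigeonholing in $t$. For (iii) I would try the same covering argument in the descendant direction, using \eqref{eq-radial-3}, and interpolate with the strong ancestor estimate.
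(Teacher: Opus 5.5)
Your part (i) is correct. It is organized a little differently from the paper. You get necessity of \eqref{eq-radial-1} from Theorem \ref{thm-Bound-Ap} via $\|P_m\|^p=\|T_{0,m}\|^p\le[w]_{\mathscr A_{p,\varepsilon}}k^{-\varepsilon m}$. You get sufficiency by checking the summability hypothesis of Lemma \ref{lem-directional-decay} for both families, uniformly in $p$. The paper instead uses Theorem \ref{thm-atomic-characterization} for $p\le1$ and Lemmas \ref{lem:radial-upward-necessary}, \ref{lem:radial-Pm-decay} for $p>1$. Your point-mass derivation of \eqref{eq-radial-weak-subone} and \eqref{eq-radial-3} is also fine up to constants: the sets $D_m(y^{(j)})$ overlap, but the disjoint pieces $\Gamma_{j,m}(y)$ carry at least half of their weight.

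The genuine gap is the necessity of \eqref{eq-radial-1} in (iii). You evaluate the weak-type inequality for $f=\sigma\mathbf 1_{D_m(x)}$ only at the single point $x$. That yields $\alpha_m(x)\lesssim1$, i.e. $\sup_N W_N/W_{N+m}\lesssim1$ \emph{uniformly} in $m$, with no decay. In (i) you could sum over $m$ only because Theorem \ref{thm-Bound-Ap} supplied the factor $k^{-\varepsilon m}$. Under a weak-type hypothesis nothing supplies it, so summing gives only $\frac1{W_N}\sum_{j\le N}W_{N-j}\lesssim N+1$.

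This is not a bookkeeping issue. Take $w=w_{-1}$, so $W_N\equiv1$. Then $\|P_m\|_{L^p(w)\to L^p(w)}=1$ for all $m$. For $p=1$, all point-mass tests pass, i.e. the expression in \eqref{eq-radial-weak-subone} with $p=1$ is at most $\sum_i(i+1)k^{-i}$. Yet \eqref{eq-radial-1} fails, and $M$ is not bounded from $L^p(w_{-1})$ to $L^{p,\infty}(w_{-1})$ for any $p\ge1$ (Corollary \ref{cor:exponential-radial}(ii)). So neither your single-point test nor the $\mathbf 1_{\{y\}}$ test for $p=1$ can produce \eqref{eq-radial-1}.

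You need a test function whose maximal function is large on many levels at once. The paper takes $\mathbf 1_{T_N}$, for which $M^\circ\mathbf 1_{T_N}\ge c$ on $\bigcup_{j\le N}T_j$; this gives $\sum_{j\le N}W_j\lesssim W_N$ directly. Your $\sigma\mathbf 1_{D_m(x)}$ (a multiple of $\mathbf 1_{D_m(x)}$ for radial $w$) also works, provided you use that $Mf$ is comparably large on all of $\bigcup_{i\le m}D_i(x)$, not only at $x$. For $x=o$ this is exactly the paper's test.

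The sufficiency halves of (ii) and (iii) are not proved, and the mechanisms you sketch miss the decisive ingredients.

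For (ii), the balls $B(y,t_y)$ with $k^{-t_y}|f(y)|\gtrsim\lambda$ do not cover $\{Mf>\lambda\}$. For $f=\mathbf 1_{D_m(x)}$ one has $Mf(x)\gtrsim1$, while $k^{-d(x,y)}f(y)=k^{-m}$ for every $y$. No pigeonholing in $t$ helps, because the mass is spread over $k^m$ points. What is needed is a way to add the contributions $|f(y)|M(\mathbf 1_{\{y\}})$ in $L^{p,\infty}(w)$, namely the $p$-convexity of $L^{p,\infty}(w)$ for $p<1$ used by the paper. Concretely, your balls handle only the large parts $\{|f(y)|k^{-d(\cdot,y)}>\lambda\}=B(y,t_y)$. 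The small parts must be controlled by Chebyshev in $L^1(w)$, using $\sum_{r>t_y}k^{-r}w(S(y,r))\lesssim k^{(p-1)t_y}w(y)$; this is exactly where $p<1$ enters.

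For (iii), $M$ is not dominated by a sum of an operator moving mass only to ancestors and one moving mass only to descendants. Points $y\in S(x,r)$ with $x\wedge y\notin\{x,y\}$ are reached by going up and then down. The needed argument has four pieces, assembled as in Lemma \ref{lem:weak-directional-synthesis}:
\begin{itemize}
\item the composition bound $M^\circ f\lesssim\sum_m H^*(P_mf)$ from Lemma \ref{lem:strong-pointwise};
\item the weak-type bound for $H^*$ from \eqref{eq-radial-3}, which is your descendant covering (Lemma \ref{lem:Manc-weak});
\item the exponential decay $\|P_m\|\lesssim\theta^m$ coming from \eqref{eq-radial-1};
\item the geometric splitting $\lambda=\sum_m(1-\eta)\eta^m\lambda$ with $\eta\in(\theta,1)$, used to sum the weak-type estimates.
\end{itemize}
No interpolation is involved.
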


The following proposition shows that, for any $p\in(0,\frac{1}{2}]$,
there exists no weight $w$ on $T$ such that
$M$ is bounded from $L^p(w)$ to $L^{p,\infty}(w)$.
Thus, the range $p\in(\frac{1}{2},\infty)$
considered in items (ii) and (iii)
of Theorem \ref{thm-bound-radial} is sharp.

\begin{proposition}\label{prop:weak-lower-endpoint}
Let $p\in(0,\frac{1}{2}]$. Then there exists no weight $w$ on $T$ such that
$M$ is bounded from $L^p(w)$ to $L^{p,\infty}(w)$.
\end{proposition}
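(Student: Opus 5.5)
The plan is to test the weak-type inequality on point masses $f:=\mathbf 1_{\{y\}}$, as in the proof of Theorem \ref{thm-atomic-characterization}, and to extract two competing exponential inequalities whose compatibility forces $p>\frac12$. Suppose that $M$ is bounded from $L^p(w)$ to $L^{p,\infty}(w)$ with constant $C$. By \eqref{eq-test-M-onepoint}, $M(\mathbf 1_{\{y\}})(x)\ge c\,k^{-d(x,y)}$ for some $c>0$ independent of $k$, $x$, and $y$. Choosing the level $\lambda:=\frac c2 k^{-t}$, the level set contains the whole ball $B(y,t)$. Since $\|\mathbf 1_{\{y\}}\|_{L^p(w)}^p=w(y)$, we obtain, for any $y\in T$ and $t\in\mathbb Z_+$,
\begin{align*}
w(B(y,t))\lesssim k^{pt}\,w(y),
\end{align*}
where the implicit constant is uniform in $y$ and $t$. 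I would use this with the two pieces of $B(y,t)$ that respect the tree structure. Taking $D_t(y)\subset B(y,t)$ gives $w(D_t(y))\lesssim k^{pt}w(y)$. Taking the ancestor $y^{(t)}\in B(y,t)$ (when $t\le|y|$) gives $w(y^{(t)})\lesssim k^{pt}w(y)$.

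Next I chain the two. Fix $x\in T$ and $t\in\mathbb Z_+$. For each $z\in D_t(x)$ we have $z^{(t)}=x$, so $w(x)\lesssim k^{pt}w(z)$. Summing over the $k^t$ points of $D_t(x)$ yields
\begin{align*}
k^t w(x)\lesssim k^{pt}\,w(D_t(x))\lesssim k^{2pt}\,w(x),
\end{align*}
that is, $k^{t(1-2p)}\lesssim1$ uniformly in $t$. For $p\in(0,\frac12)$ this fails as $t\to\infty$, which already settles the strict range. This is exactly the weak-type analogue of the computation $k^{n(1-2p+2\varepsilon)}\le[w]_{\mathscr A_{p,\varepsilon}}$ in the proof of Proposition \ref{prop:nonempty-exp-class}.

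The main obstacle is the endpoint $p=\frac12$. There the argument above gives only a bounded quantity, so one must gain an unbounded factor, for instance a factor $t$, by summing over scales. My first attempt is to use the fact that the level set at height $\frac c2k^{-t}$ contains all layers $D_s(x)$, $0\le s\le t$, simultaneously. This gives
\begin{align*}
\sum_{s=0}^t w(D_s(x))\lesssim k^{t/2}w(x),
\end{align*}
while the chaining gives $w(D_s(x))\gtrsim k^{s/2}w(x)$. The point would then be to upgrade the lower bound so that every intermediate layer carries mass comparable to $k^{t/2}w(x)$, rather than a geometrically growing fraction of it. For this I would test $f:=\mathbf 1_{D_t(x)}$, or more generally $f:=\sum_{z\in D_t(x)}a_z\mathbf 1_{\{z\}}$ with $a_z$ adapted to $w$, whose maximal function is $\gtrsim k^{t-2s}$ on $D_{t+s}(x^{(s)})$. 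Combining these level-set estimates at the various heights $k^{-s}$ with the point-mass bounds at every intermediate ancestor, I expect a pigeonhole or iteration argument, in the spirit of Lemma \ref{lem-directional-decay}, to show that the normalized quantities $w(D_s(x))/(k^{s/2}w(x))$ cannot all stay bounded while their sum up to $t$ stays $\lesssim k^{t/2}$. This would give the required contradiction at $p=\frac12$. If this fails, a fallback is to test $f$ on a full descendant block $\bigcup_{s\le t}D_s(x)$ and exploit the $t$ disjoint annuli $D_{t+s}(x^{(s)})$ in the level set, each contributing comparable weighted mass, to produce the missing logarithmic factor.
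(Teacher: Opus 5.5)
Your argument for $p\in(0,\frac12)$ is correct and more elementary than the paper's. Chaining the ancestor bound $w(x)=w(z^{(t)})\lesssim k^{pt}w(z)$ over the $k^t$ points $z\in D_t(x)$ against $w(D_t(x))\lesssim k^{pt}w(x)$ gives $k^{t(1-2p)}\lesssim 1$. The gap is the endpoint $p=\frac12$, which is part of the statement. There you only describe what you hope a pigeonhole or iteration argument would give, plus a fallback; no step is carried out. Moreover, the proposed route cannot succeed. Every inequality your endpoint argument actually uses holds for the radial weight $w(x)=k^{-|x|/2}$. These are $w(D_s(x))\lesssim k^{s/2}w(x)$, $w(x^{(s)})\lesssim k^{s/2}w(x)$, the chained lower bound $w(D_s(x))\gtrsim k^{s/2}w(x)$, and $\sum_{s=0}^{t}w(D_s(x))\lesssim k^{t/2}w(x)$; for this weight $w(D_s(x))=k^{s/2}w(x)=w(x^{(s)})$ exactly. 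So nothing built only from descendant layers and ancestors can produce a contradiction. The \emph{upgrade} you ask for, namely every intermediate layer carrying mass comparable to $k^{t/2}w(x)$, is not a step toward the contradiction. It is the contradiction itself, since you already have $w(D_s(x))\lesssim k^{s/2}w(x)$. The auxiliary tests are also wrong as stated. For $f=\mathbf 1_{D_t(x)}$, $s\ge1$, and $y\in D_{t+s}(x^{(s)})$ with $y\wedge x=x^{(s)}$, every point of $D_t(x)$ lies at distance $2(t+s)$ from $y$. Hence $Mf(y)\sim k^{-t-2s}$, not $\gtrsim k^{t-2s}$. Also, the sets $D_{t+s}(x^{(s)})$, $0\le s\le t$, are nested, not disjoint.

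The missing idea is that at $p=\frac12$ the contradiction lives in the part of the ball you discard right after your first display. These are the mixed pieces $E_j(y,r):=\{z\in S(y,r):\,y\wedge z=y^{(j)}\}$ with $0<j<r$. For $w(x)=k^{-|x|/2}$ and $|y|\ge r$, each of these $r-1$ pieces has mass $\frac{k-1}{k}k^{r/2}w(y)$. So $w(S(y,r))\sim r\,k^{r/2}w(y)$, which violates your bound $w(B(y,r))\lesssim k^{r/2}w(y)$. For a general weight you need a global device to exploit this, and the paper uses an $L^2$ argument. Set $\mathcal S_rf(x):=\sum_{y\in S(x,r)}f(y)$. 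Cauchy--Schwarz weighted by $w$ (a Schur test using $w(S(x,r))\lesssim k^{pr}w(x)$ twice) gives $\|\mathcal S_r\|_{L^2(T)\to L^2(T)}\lesssim k^{pr}$. Now test $g_N(x):=k^{-|x|/2}\mathbf 1_{\{|x|\le N\}}(x)$. When $r\le|x|\le N-r$, each of the $r+1$ pieces $E_j(x,r)$ contributes at least $\frac12k^{r/2}g_N(x)$, which gives $\|\mathcal S_r\|_{L^2(T)\to L^2(T)}\gtrsim r\,k^{r/2}$. Hence $r\lesssim k^{(p-\frac12)r}$, impossible for every $p\le\frac12$, so both ranges are handled at once. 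An elementary repair closer to your approach also works. Averaging $w(S(y,r))\lesssim k^{r/2}w(y)$ over $y\in T_N$ shows that the radialization $\widetilde w:=\sum_N k^{-N}w(T_N)\mathbf 1_{T_N}$ satisfies the same bound. With $V_N:=k^{-N/2}w(T_N)$, this implies $\sum_{j=0}^{r}V_{N+r-2j}\lesssim V_N$ whenever $N\ge r$. The terms with $j=r$ give $V_m\lesssim V_n$ for all $m\le n$, and then the choice $N=r=2s$ forces $s+1\lesssim1$.
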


\begin{proof}
Suppose, to the contrary, that such a weight $w$ exists.
Applying \eqref{eq-test-M-onepoint} and the assumed
weak-type boundedness of $M$, we conclude that,
for any $y\in T$ and $r\in\mathbb{Z}_+$,
\begin{align}\label{eq-general-weak-sphere-growth}
w(S(y,r))\lesssim k^{pr}w(y).
\end{align}
For any $r\in\mathbb{Z}_+$, $f\in L^2(T)$, and $x\in T$, let
\begin{align*}
\mathcal{S}_r f(x):=\sum_{y\in S(x,r)}f(y).
\end{align*}
We next claim that, for any $r\in\mathbb{Z}_+$,
\begin{align}\label{eq-claim-spherical}
\left\|\mathcal S_r\right\|_{L^2(T)\to L^2(T)}
\lesssim k^{pr}.
\end{align}
This inequality can be proved by the argument used in
the proof of \cite[Theorem 19]{rt91}.
But, for completeness, we give an elementary proof here.
By the Cauchy--Schwarz inequality and \eqref{eq-general-weak-sphere-growth},
we find that, for any $f\in L^2(T)$ and $x\in T$,
\begin{align*}
\left|\mathcal S_r f(x)\right|^2\leq\left[\sum_{y\in S(x,r)}w(y)\right]
\left[\sum_{y\in S(x,r)}\frac{|f(y)|^2}{w(y)}\right]\lesssim
k^{pr}w(x)\sum_{y\in S(x,r)}\frac{|f(y)|^2}{w(y)}.
\end{align*}
Summing over $x\in T$ and using Tonelli's theorem and
\eqref{eq-general-weak-sphere-growth} again, we conclude that
\begin{align*}
\left\|\mathcal S_r f\right\|_{L^2(T)}^2\lesssim k^{pr}
\sum_{y\in T}\frac{|f(y)|^2}{w(y)}
\sum_{x\in S(y,r)}w(x)
\lesssim k^{2pr}\|f\|_{L^2(T)}^2,
\end{align*}
and hence the claim \eqref{eq-claim-spherical} holds.
We next establish a lower bound for the
operator norms of $\{\mathcal{S}_r\}_{r\geq 2}$ on $L^2(T)$.
To this end, fix $r\in\mathbb{N}$ with $r\geq 2$ and,
for any $N\in\mathbb{N}$ satisfying $N\geq2r$ and $x\in T$, let
\begin{align*}
g_N(x):=\sum_{n=0}^{N}k^{-\frac{|x|}{2}}
\mathbf1_{T_n}(x).
\end{align*}
Since $|T_n|=k^n$ for any $n\in\mathbb{Z}_+$, it follows that
\begin{align}\label{eq-norm-gN}
\|g_N\|_{L^2(T)}^2=\sum_{n=0}^N 1=N+1.
\end{align}
Suppose that $x\in T$ satisfies $r\leq |x|\leq N-r$.
By the branch structure of $T$, we find that
$S(x,r)$ can be decomposed into the following
mutually disjoint sets
\begin{align}\label{eq-decomp-Sxr}
S(x,r)=\bigcup_{j=0}^{r}\left\{y\in S(x,r):\,x\wedge y=x^{(j)}\right\}
=:\bigcup_{j=0}^{r}E_j(x,r).
\end{align}
Observe that, for any $j\in\{1,\dots,r-1\}$,
$|E_j(x,r)|=(k-1)k^{r-j-1}$, $|E_0(x,r)|=k^r$, and $|E_r(x,r)|=1$.
From these, the decomposition in \eqref{eq-decomp-Sxr}, and the definition of
$g_N$, we deduce that
\begin{align*}
\mathcal S_r g_N(x)&=\sum_{j=0}^{r}
\sum_{y\in E_j(x,r)}g_N(y)
=\left[k^{\frac r2}+(k-1)\sum_{j=1}^{r-1}
k^{\frac{r}{2}-1}+k^{\frac r2}\right]g_N(x)\\
&=\left[2+\frac{k-1}{k}(r-1)\right]k^{\frac r2}g_N(x)
\geq\frac{r+3}{2}k^{\frac r2}g_N(x),
\end{align*}
where, in the last step, we used the assumption that $k\geq 2$.
Thus,
\begin{align*}
\|\mathcal S_r g_N\|_{L^2(T)}^2
\gtrsim r^2 k^r\sum_{n=r}^{N-r}\sum_{x\in T_n}|g_N(x)|^2
=r^2 k^r(N-2r+1),
\end{align*}
which, together with \eqref{eq-norm-gN}, further implies that
\begin{align*}
\|\mathcal S_r\|_{L^2(T)\to L^2(T)}
\gtrsim r k^{\frac r2}
\left(\frac{N-2r+1}{N+1}\right)^{\frac12}.
\end{align*}
Letting $N\to\infty$, we conclude that
\begin{align}\label{eq-spherical-sum-lower}
\|\mathcal S_r\|_{L^2(T)\to L^2(T)}\gtrsim r k^{\frac r2}.
\end{align}
Combining \eqref{eq-claim-spherical} and
\eqref{eq-spherical-sum-lower}, we find that,
for any $r\in\mathbb{N}$ with $r\geq 2$,
$r\lesssim k^{(p-\frac12)r}$,
which is impossible when $p\in(0,\frac{1}{2}]$.
This completes the proof of Proposition \ref{prop:weak-lower-endpoint}.
\end{proof}

For any $p\in(\frac{1}{2},1)$, the following proposition
gives a characterization of the boundedness of $M$
from $L^p(w)$ to $L^{p,\infty}(w)$.

\begin{proposition}\label{prop:weak-atomic-characterization}
Let $p\in(\frac{1}{2},1)$, and let $w$ be a weight on $T$. Then $M$ is bounded
from $L^p(w)$ to $L^{p,\infty}(w)$ if and only if
\begin{align}\label{eq-weak-atomic-ball}
\sup_{y\in T}\sup_{r\in\mathbb Z_+}
\frac{w(B(y,r))}{k^{pr}w(y)}<\infty.
\end{align}
\end{proposition}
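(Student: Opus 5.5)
Necessity is obtained by testing on point masses, and sufficiency by a layer-cake argument for $p<1$.

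\emph{Necessity.} Assume $M$ is bounded from $L^p(w)$ to $L^{p,\infty}(w)$ and fix $y\in T$ and $r\in\mathbb Z_+$. By \eqref{eq-test-M-onepoint}, $M(\mathbf 1_{\{y\}})(x)\gtrsim k^{-d(x,y)}\ge k^{-r}$ for every $x\in B(y,r)$. Hence there is a constant $c>0$, independent of $y$ and $r$, such that $B(y,r)\subset\{x:\,M(\mathbf 1_{\{y\}})(x)>ck^{-r}\}$. The weak-type bound with $\lambda:=ck^{-r}$ then yields
\begin{align*}
c^pk^{-pr}w(B(y,r))\lesssim w(y),
\end{align*}
which is \eqref{eq-weak-atomic-ball}.

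\emph{Sufficiency.} Assume \eqref{eq-weak-atomic-ball} holds with constant $K$. By \eqref{eq-M-kernel}, and since $p<1$, inequality \eqref{eq-embeding-l1} gives the pointwise bound
\begin{align*}
[Mf(x)]^p\lesssim \sum_{y\in T}k^{-pd(x,y)}|f(y)|^p.
\end{align*}
Write $g:=|f|^p$, $\mu:=gw$ and $\mathcal K g(x):=\sum_y k^{-pd(x,y)}g(y)$. It then suffices to prove that $\mathcal K$ is of weak type $(1,1)$ from $L^1(w)$ to $L^{1,\infty}(w)$, because
\begin{align*}
w(\{Mf>\lambda\})\le w(\{\mathcal Kg\gtrsim\lambda^p\})\lesssim\lambda^{-p}\|g\|_{L^1(w)}=\lambda^{-p}\|f\|_{L^p(w)}^p.
\end{align*}

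\emph{Weak type $(1,1)$ of $\mathcal K$.} A naive strong $L^1(w)$ bound would require $\sum_x k^{-pd(x,y)}w(x)\lesssim w(y)$. Under \eqref{eq-weak-atomic-ball} this sum is only $\lesssim\sum_r k^{-pr}k^{pr}w(y)$, which is a divergent sum, so the weak-type structure must be used. The plan is to split $\mathcal K$ by distance, $\mathcal K=\sum_r \mathcal K_r$ with $\mathcal K_rg(x):=k^{-pr}\sum_{y\in S(x,r)}g(y)$, and to use a level-set argument. Fix $\lambda>0$. Since $g\in\ell^1(w)$ and $w>0$, each $g(y)$ is finite, so for each $y$ there is a critical radius $r_y$ with $k^{-pr_y}g(y)\sim\lambda$. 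Contributions from $y$ at distances $d(x,y)>r_y+s$ are bounded by $\lambda k^{-ps}$ each, and these should be controlled through counting, using that $x\in S(y,r)$ and $|S(x,r)|\sim k^r$. Contributions from distances at most $r_y$ are treated as a ``bad'' set: $x$ lies in $\bigcup_y B(y,r_y)$, whose weight is at most $\sum_y w(B(y,r_y))\le K\sum_y k^{pr_y}w(y)\sim K\lambda^{-1}\sum_y g(y)w(y)$, which is exactly the desired bound.

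The main obstacle is the good part, where a sum of many small contributions must be kept below $\lambda$. One possible route is to replace $\lambda$ in the definition of $r_y$ by $\lambda\delta$. On the complement of $\bigcup_yB(y,r_y(\delta))$ every term then satisfies $k^{-pd}g(y)<\delta\lambda$, but the number of terms is unbounded, so this alone does not suffice. A better route is to apply the bad-set bound separately to each dyadic level: define $E_s:=\bigcup_y B(y,r_y(2^{-s}\lambda))$, so that $w(E_s)\lesssim K2^{s}\lambda^{-1}\|g\|_{L^1(w)}$, and then sum with weights decaying faster than $2^{-s}$. Making this precise is the delicate point; the bound $p>\frac12$ from Proposition \ref{prop:weak-lower-endpoint} indicates that the exponential gap $k^{(p-\frac12)r}$ should ultimately supply the summability.
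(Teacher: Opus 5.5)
Your necessity argument is correct and coincides with the paper's. The sufficiency part has a genuine gap, and it is more than an unfinished ``good part'': the statement you reduce to is false in general. After the pointwise bound $[Mf]^p\lesssim\mathcal K(|f|^p)$ you need $\mathcal K\colon L^1(w)\to L^{1,\infty}(w)$, and this can fail under \eqref{eq-weak-atomic-ball}.

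Take $k=4$, $p=\log_4 3\in(\frac12,1)$, and $w(x):=4^{-p|x|}=3^{-|x|}$. Count the points of $B(y,r)$ by their branching ancestor $y^{(j)}$ and their depth $i$ below it. This gives $w(B(y,r))\le 3^{-|y|}\sum_{j\le r}3^{j}\sum_{i\le r-j}(4/3)^{i}\lesssim 3^{r-|y|}=k^{pr}w(y)$, so \eqref{eq-weak-atomic-ball} holds.

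Now let $S\ni o$ be a subtree in which every vertex keeps $3$ of its $4$ children. Set $E:=\{x\in S:|x|\le N\}$, $L:=S\cap T_N$, and $g:=3^N\mathbf 1_L$. Then $\|g\|_{L^1(w)}=3^N$. Each $x\in E$ has $3^{N-|x|}$ descendants in $L$ at distance $N-|x|$, so $\mathcal Kg(x)\ge 3^N\cdot 3^{N-|x|}\cdot 4^{-p(N-|x|)}=3^N$ on $E$. Also $w(E)=\sum_{m=0}^N 3^m3^{-m}=N+1$. Hence $\|\mathcal Kg\|_{L^{1,\infty}(w)}\ge\frac{N+1}{2}\|g\|_{L^1(w)}$, which is unbounded in $N$.

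The culprit is the pointwise inequality $(\sum_y a_y)^p\le\sum_y a_y^p$. For $f=g^{1/p}$ it overestimates $(\sum_y|f(y)|k^{-d(x,y)})^p$ by a factor $\sim 3^{(1-p)(N-|x|)}$ on $E$. It also moves the problem from the $p$-normable space $L^{p,\infty}(w)$ into $L^{1,\infty}(w)$, which is not normable. So no level-set bookkeeping with critical radii can close your argument. Note too that $p>\frac12$ plays no role in the sufficiency. It only reflects the fact that \eqref{eq-weak-atomic-ball} cannot hold for $p\le\frac12$, by Proposition \ref{prop:weak-lower-endpoint}.

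The missing idea is to apply the $p$-triangle inequality to the quasi-norm, not pointwise. For $p\in(0,1)$ and any nonnegative functions $h_y$,
\[
\Bigl\|\sum_y h_y\Bigr\|_{L^{p,\infty}(w)}^p\le\frac{2-p}{1-p}\sum_y\|h_y\|_{L^{p,\infty}(w)}^p .
\]
This is the Stein--Taibleson--Weiss lemma, i.e.\ the $p$-convexity of $L^{p,\infty}(w)$ invoked in the paper. To prove it, split each $h_y$ at height $\lambda$ and count the parts above $\lambda$ directly. Then apply Chebyshev's inequality to the parts below $\lambda$, whose $L^1(w)$ norms are at most $\frac{\lambda^{1-p}}{1-p}\|h_y\|^p_{L^{p,\infty}(w)}$.

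By \eqref{eq-M-kernel} and \eqref{eq-test-M-onepoint}, $Mf\lesssim\sum_y|f(y)|M(\mathbf 1_{\{y\}})$. The lemma then gives $\|Mf\|^p_{L^{p,\infty}(w)}\lesssim\sum_y|f(y)|^p\|M(\mathbf 1_{\{y\}})\|^p_{L^{p,\infty}(w)}$. Since $M(\mathbf 1_{\{y\}})\sim k^{-d(\cdot,y)}$, its superlevel sets are contained in balls $B(y,r)$ with $k^{-r}\gtrsim\lambda$. Therefore $\|M(\mathbf 1_{\{y\}})\|^p_{L^{p,\infty}(w)}\lesssim\sup_r k^{-pr}w(B(y,r))\lesssim w(y)$, and summing over $y$ gives $\|f\|^p_{L^p(w)}$. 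This is exactly the paper's proof.
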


\begin{proof}
To prove the necessity, suppose first that
$M$ is bounded from $L^p(w)$ to $L^{p,\infty}(w)$.
By \eqref{eq-test-M-onepoint}, we find that, for any
$y\in T$ and $x\in B(y,r)$,
$M(\mathbf1_{\{y\}})(x)\gtrsim \frac{1}{k^r}$.
Therefore, testing the boundedness of $M$ on the function
$\mathbf1_{\{y\}}$ yields \eqref{eq-weak-atomic-ball}.
This completes the proof of the necessity.

We next show the sufficiency. From \eqref{eq-test-M-onepoint}
and \eqref{eq-weak-atomic-ball}, we infer that, for any $y\in T$,
\begin{align*}
\|M(\mathbf1_{\{y\}})\|_{L^{p,\infty}(w)}^p
\lesssim\sup_{r\in\mathbb Z_+}
\frac{w(B(y,r))}{k^{pr}}\lesssim w(y),
\end{align*}
where the implicit positive constant is independent of $y$.
Using \eqref{eq-test-M-onepoint}, \eqref{eq-M-kernel},
and the $p$-convexity of $L^{p,\infty}(w)$
(see, for instance, \cite[Theorem 5]{arino87}),
we conclude that, for any $f\in L^{p}(w)$,
\begin{align*}
\left\|Mf\right\|_{L^{p,\infty}(w)}^p
&\lesssim\left\|\sum_{y\in T}|f(y)|M(\mathbf1_{\{y\}})
\right\|_{L^{p,\infty}(w)}^p
\lesssim\sum_{y\in T}|f(y)|^p
\left\|M(\mathbf1_{\{y\}})\right\|_{L^{p,\infty}(w)}^p\\
&\lesssim\sum_{y\in T}|f(y)|^pw(y)
=\|f\|_{L^p(w)}^p,
\end{align*}
which further implies that $M$ is bounded
from $L^p(w)$ to $L^{p,\infty}(w)$.
This completes the proof of the sufficiency,
and hence Proposition \ref{prop:weak-atomic-characterization}.
\end{proof}

\begin{remark}
It is also a natural question to
characterize the conditions on $w$ such that
$M$ is bounded from $L^p(w)$ to $L^{p,\infty}(w)$
when $p\in[1,\infty)$. However, the proof of Proposition \ref{prop:weak-atomic-characterization}
depends on the $p$-convexity of $L^{p,\infty}(w)$,
which only holds when $p\in(0,1)$.
Therefore, the proof of Proposition \ref{prop:weak-atomic-characterization}
does not extend directly to the case where $p\in[1,\infty)$.
In particular, when $p\in(1,\infty)$, it was proved in
\cite[Theorem 1.1]{or22} that, if there exists $\beta\in(0,1)$
such that, for any $r\in\mathbb{Z}_+$ and $E,F\subset T$,
\begin{align*}
(\mathbf{1}\otimes w)\left(I_r(E,F)\right)
\lesssim k^{\beta r}[w(E)]^{\frac{\beta}{p}}[w(F)]^{1-\frac{\beta}{p}},
\end{align*}
then $M$ is bounded from $L^p(w)$ to $L^{p,\infty}(w)$,
where $I_r(E,F)$ is as in \eqref{eq-r-incidence-set}.
It is still unknown whether this condition is necessary.
\end{remark}

The following lemma provides the necessary condition in
Theorem \ref{thm-bound-radial}.

\begin{lemma}\label{lem:radial-upward-necessary}
Let $p\in[1,\infty)$, and let $w$ be a radial weight on $T$.
If $M$ is bounded from $L^p(w)$ to $L^{p,\infty}(w)$, then
\eqref{eq-radial-1} holds.
\end{lemma}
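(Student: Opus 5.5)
The plan is to test the weak-type inequality on the indicator function of a single full sphere around the root. For $p\in[1,\infty)$ we have $(1-p)_+=0$, so \eqref{eq-radial-1} reads
\begin{align*}
\sup_{N\in\mathbb Z_+}\frac{1}{W_N}\sum_{j=0}^{N}W_{N-j}<\infty .
\end{align*}
Fix $N\in\mathbb Z_+$ and let $f:=\mathbf 1_{T_N}$, with $T_N$ as in \eqref{eq-def-TN}. Since $w$ is radial, $\|f\|_{L^p(w)}^p=w(T_N)=W_N$.

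The key step is a uniform lower bound for $Mf$ on all shells $T_{N-j}$ with $j\in\{0,\dots,N\}$. Take $x\in T_{N-j}$. The descendant layer $D_j(x)$ lies in $T_N$, is contained in $B(x,j)$, and has exactly $k^j$ points. By \eqref{eq-sphere-ball-growth} we have $|B(x,j)|\le Ck^j$ with $C$ independent of $x$, $j$ and $k$ (in fact $|B(x,j)|\le 3k^j$ is easy to check). Hence
\begin{align*}
Mf(x)\ge \frac{|B(x,j)\cap T_N|}{|B(x,j)|}\ge\frac{|D_j(x)|}{|B(x,j)|}\ge c_0:=\frac1C .
\end{align*}
So $\{x\in T:\ Mf(x)>c_0/2\}$ contains $\bigcup_{j=0}^N T_{N-j}$. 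These shells are pairwise disjoint, so
\begin{align*}
w\left(\{Mf>c_0/2\}\right)\ge\sum_{j=0}^N W_{N-j}.
\end{align*}

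Next apply the assumed weak-type bound with $\lambda=c_0/2$:
\begin{align*}
\left(\frac{c_0}{2}\right)^p\sum_{j=0}^N W_{N-j}\le \|Mf\|_{L^{p,\infty}(w)}^p\lesssim \|f\|_{L^p(w)}^p=W_N .
\end{align*}
The implicit constant does not depend on $N$, so dividing by $W_N$ and taking the supremum over $N$ gives \eqref{eq-radial-1}.

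The only point that needs care is the geometric lower bound $Mf\gtrsim1$ on every shell below $T_N$, uniformly in $j$. It comes from the fact that the radius-$j$ ball around $x\in T_{N-j}$ spends a fixed proportion of its mass on $D_j(x)\subset T_N$. This is exactly the comparability $|B(x,j)|\sim k^j=|D_j(x)|$ in \eqref{eq-sphere-ball-growth}, and I expect no further obstacle. The hypothesis $p\ge1$ enters only through $(1-p)_+=0$, which turns \eqref{eq-radial-1} into the unweighted-in-$j$ sum above.
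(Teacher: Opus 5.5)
Your proof is correct and is essentially the paper's: both test the weak-type bound on $\mathbf 1_{T_N}$ and use $D_{j}(x)\subset T_N$ for $x\in T_{N-j}$ to put every shell $T_{N-j}$, $0\le j\le N$, inside a fixed superlevel set. The only cosmetic difference is that the paper bounds the spherical maximal function $M^\circ f$ from below by $|D_{N-j}(x)|/|S(x,N-j)|$ (indexing with $x\in T_j$) and then invokes $M\sim M^\circ$, whereas you bound a ball average directly; your estimate $|B(x,j)|\le 3k^j$ is correct.
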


\begin{proof}
Fix $N\in\mathbb Z_+$ and take $f:=\mathbf1_{T_N}$. If
$j\in[0,N]\cap\mathbb Z_+$ and $x\in T_j$, then
$D_{N-j}(x)\subset T_N$. By
\eqref{eq-sphere-ball-growth} and \eqref{eq-def-Mcicr},
we find that there exists a positive constant $c$ such that
$$
M^\circ f(x)\geq
\frac{|D_{N-j}(x)|}{|S(x,N-j)|}\geq c.
$$
It follows from the pointwise equivalence of $M$ and $M^\circ$
and from the assumed weak-type boundedness of $M$ that
$$
\sum_{j=0}^{N}W_j
\lesssim w\left(\left\{x\in T:\,M^\circ f(x)>
\frac{c}{2}\right\}\right)
\lesssim\|f\|_{L^p(w)}^p=W_N.
$$
Noting $(1-p)_+=0$ now, this inequality is exactly \eqref{eq-radial-1}.
This then completes the proof of Lemma \ref{lem:radial-upward-necessary}.
\end{proof}

In the next lemma, we show that \eqref{eq-radial-1}
guarantees the exponential decay of the
operator norms of $\{P_m\}_{m\in\mathbb{Z}_+}$ on $L^p(w)$,
which will be used in the proof of Theorem \ref{thm-bound-radial}.

\begin{lemma}\label{lem:radial-Pm-decay}
Let $p\in[1,\infty)$, and let $w$ be a radial weight on $T$.
If \eqref{eq-radial-1} holds, then there exist a positive constant
$C$ and $\theta\in(0,1)$ such that, for any $N\in\mathbb Z_+$ and
$m\in[0,N]\cap\mathbb Z_+$,
\begin{align}\label{eq-A-self-improve}
W_{N-m}\leq C\theta^mW_N.
\end{align}
Moreover, for any $m\in\mathbb Z_+$ and $f\in L^p(w)$,
\begin{align}\label{eq-Pm-radial-decay}
\|P_mf\|_{L^p(w)}^p
\leq C\theta^m\|f\|_{L^p(w)}^p.
\end{align}
\end{lemma}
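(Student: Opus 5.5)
The plan is to get \eqref{eq-A-self-improve} from \eqref{eq-radial-1} by a tail-sum iteration, in the spirit of Lemma \ref{lem-directional-decay}. Then I will read off \eqref{eq-Pm-radial-decay} from the exact norm formula of Lemma \ref{lem-exact-two-layer-norm} with $j=0$.

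\emph{Step 1: geometric decay of $W$ going down.} Since $p\in[1,\infty)$, we have $(1-p)_+=0$. So \eqref{eq-radial-1} says that there is $A\in(1,\infty)$ such that, for any $L\in\mathbb Z_+$,
$$\sum_{i=0}^{L}W_i\le AW_L.$$
Fix $N$. For $n\in[0,N]\cap\mathbb Z_+$ let $\Sigma_n:=\sum_{i=0}^{N-n}W_i$, and set $\Sigma_{N+1}:=0$. Applying the displayed inequality with $L:=N-n$ gives
$$\Sigma_n\le AW_{N-n}=A(\Sigma_n-\Sigma_{n+1}),$$
and hence $\Sigma_{n+1}\le(1-\frac1A)\Sigma_n$. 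Iterating, for any $m\in[0,N]\cap\mathbb Z_+$,
$$W_{N-m}\le\Sigma_m\le\Big(1-\frac1A\Big)^m\Sigma_0\le A\Big(1-\frac1A\Big)^mW_N.$$
This is \eqref{eq-A-self-improve} with $C:=A$ and $\theta:=1-\frac1A$.

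\emph{Step 2: the norm of $P_m$.} By \eqref{eq-exact-two-layer-norm} with $j=0$, we have, when $p\in(1,\infty)$,
$$\|P_m\|_{L^p(w)\to L^p(w)}^p=\sup_{x\in T}\frac{w(x)[\sigma(D_m(x))]^{p-1}}{k^{pm}},$$
and the analogous formula with $\max_{y\in D_m(x)}w^{-1}(y)$ when $p=1$. For a radial weight and $x\in T_N$, we have $w(x)=a_N=k^{-N}W_N$ and $\sigma(D_m(x))=k^ma_{N+m}^{-\frac1{p-1}}$. The quantity inside the supremum therefore equals
$$\frac{a_N}{k^ma_{N+m}}=\frac{W_N}{W_{N+m}}.$$
The case $p=1$ gives the same expression directly. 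Applying Step 1 with $N$ replaced by $N+m$ bounds this by $C\theta^m$ uniformly in $N$, which yields \eqref{eq-Pm-radial-decay}.

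The only point needing care is the index bookkeeping in Step 1: the condition must be applied at every level $N-n$, not only at $N$, and the telescoping inequality must hold down to $\Sigma_{N+1}=0$. Step 2 is then a direct computation.
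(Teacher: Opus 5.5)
Your proof is correct and matches the paper's argument: Step 1 is the same tail-sum iteration $S_{N-1}\le(1-\frac1C)S_N$ with $S_N:=\sum_{j=0}^N W_j$ that the paper uses. For \eqref{eq-Pm-radial-decay}, the paper applies H\"older's inequality directly and reindexes the sum through $y\in T_N$, $x=y^{(m)}$; you instead read off the exact norm $\|P_m\|_{L^p(w)\to L^p(w)}^p=\sup_{N\in\mathbb Z_+}W_N/W_{N+m}$ from Lemma \ref{lem-exact-two-layer-norm} with $j=0$, which is the same computation routed through an earlier lemma.
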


\begin{proof}
We first prove \eqref{eq-A-self-improve}. To this end,
for any $N\in\mathbb Z_+$, let
$
S_N:=\sum_{j=0}^{N}W_j.
$
By \eqref{eq-radial-1}, there exists $C\in(1,\infty)$ such that,
for any $N\in\mathbb Z_+$, $S_N\leq C W_N$. Since
$S_N=S_{N-1}+W_N$ for any $N\in\mathbb N$, we infer that
$S_{N-1}\leq(1-\frac{1}{C})S_N$.
Let $\theta:=1-\frac{1}{C}\in(0,1)$.  Iterating this estimate,
for any $N\in\mathbb{Z}_+$ and $m\in[0,N]\cap\mathbb{Z}_+$
we obtain $S_{N-m}\leq\theta^mS_N$.
This, together with $W_{N-m}\leq S_{N-m}$ and $S_N\leq C W_N$,
further implies \eqref{eq-A-self-improve}.

To show \eqref{eq-Pm-radial-decay}, applying H\"{o}lder's inequality,
we conclude that, for any $m\in\mathbb Z_+$, $f\in L^p(w)$, and $x\in T$,
$$
|P_mf(x)|^p\leq \frac{1}{k^{m}}\sum_{y\in D_m(x)}|f(y)|^p.
$$
Since $w$ is radial, from \eqref{eq-A-self-improve}, we deduce that,
for any $m\in\mathbb Z_+$ and $f\in L^p(w)$,
\begin{align*}
\|P_mf\|_{L^p(w)}^p
&\leq \frac{1}{k^{m}}\sum_{x\in T}w(x)
\sum_{y\in D_m(x)}|f(y)|^p\\
&=\sum_{N\geq m}\sum_{y\in T_N}|f(y)|^p
\frac{W_{N-m}}{k^N}\\
&\lesssim\theta^m
\sum_{N\geq m}\sum_{y\in T_N}|f(y)|^p
\frac{W_N}{k^N}
\leq\theta^m\|f\|_{L^p(w)}^p,
\end{align*}
which completes the proof of \eqref{eq-Pm-radial-decay},
and hence Lemma \ref{lem:radial-Pm-decay}.
\end{proof}

To prove the weak-type estimate in Theorem \ref{thm-bound-radial},
we  define the operator $H^*$ by setting,
for any function $f$ on $T$ and for any $x\in T$,
\begin{align*}
H^*f(x):=\sup_{j\in[0,|x|]\cap\mathbb{Z}_+}\left|Q_jf(x)\right|
=\sup_{j\in[0,|x|]\cap\mathbb{Z}_+}\frac{1}{k^j}\left|f\left(x^{(j)}\right)\right|,
\end{align*}
where $Q_j$ is the same as in \eqref{eq-def-Qm}.
For any $p\in(0,\infty)$,
the following lemma establishes the characterization of
the boundedness of $H^*$ from $L^p(w)$ to $L^{p,\infty}(w)$.

\begin{lemma}\label{lem:Manc-weak}
Let $p\in(0,\infty)$, and let $w$ be a weight on $T$.
Then $H^*$ is bounded from $L^p(w)$ to $L^{p,\infty}(w)$
if and only if
\begin{align}\label{eq-Sh-Wp}
\sup_{z\in T}\sup_{t\in\mathbb Z_+}
\frac{1}{k^{pt}w(z)}\sum_{m=0}^{t}w(D_m(z))<\infty.
\end{align}
\end{lemma}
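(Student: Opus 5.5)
Necessity will come from testing $H^*$ on a point mass, and sufficiency from a stopping-time decomposition of the level set along ancestors.

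\emph{Necessity.} Assume $H^*$ is bounded from $L^p(w)$ to $L^{p,\infty}(w)$. Fix $z\in T$ and $t\in\mathbb Z_+$, and take $f:=\mathbf 1_{\{z\}}$. If $x\in D_m(z)$ with $0\le m\le t$, then $x^{(m)}=z$, so
\[
H^*f(x)\ge Q_mf(x)=k^{-m}\ge k^{-t}.
\]
Hence the level set $\{H^*f>\frac12k^{-t}\}$ contains $\bigcup_{m=0}^tD_m(z)$, and these layers are disjoint. The weak-type bound gives
\[
\left(\tfrac12 k^{-t}\right)^p\sum_{m=0}^t w(D_m(z))\lesssim \|f\|_{L^p(w)}^p=w(z).
\]
This is \eqref{eq-Sh-Wp}.

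\emph{Sufficiency.} Assume \eqref{eq-Sh-Wp} with constant $K$. Fix $f\in L^p(w)$ and $\lambda>0$. The key observation is that $H^*f(x)>\lambda$ holds if and only if some ancestor $z=x^{(j)}$ satisfies $|f(z)|>\lambda k^{j}$. For each $z\in T$ with $|f(z)|>\lambda$, let
\[
t(z):=\max\{t\in\mathbb Z_+:\ k^t<|f(z)|/\lambda\}.
\]
This is finite because $|f(z)|<\infty$, and it satisfies $k^{t(z)}<|f(z)|/\lambda$. The set $\{x:\ H^*f(x)>\lambda\}$ is then contained in $\bigcup_{z}\bigcup_{m=0}^{t(z)}D_m(z)$, the union taken over those $z$. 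By subadditivity of $w$ and \eqref{eq-Sh-Wp},
\[
w(\{H^*f>\lambda\})\le\sum_{z}\sum_{m=0}^{t(z)}w(D_m(z))\le K\sum_z k^{pt(z)}w(z)\le K\lambda^{-p}\sum_z|f(z)|^pw(z).
\]
This yields the weak-type bound with no convexity needed, so it works for every $p\in(0,\infty)$.

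The main point to verify is the level-set description: $Q_jf(x)>\lambda$ exactly when $|f(x^{(j)})|>\lambda k^j$, that is, when $j\le t(x^{(j)})$. This equivalence holds by the choice of $t(z)$ as the largest admissible exponent, with strict inequalities handled consistently. The inequality $k^{t(z)}<|f(z)|/\lambda$ is what bounds $k^{pt(z)}$ by $|f(z)|^p\lambda^{-p}$. Beyond that, the argument only uses the fact that the descendants of $z$ within depth $t$ are exactly $\bigcup_{m\le t}D_m(z)$.
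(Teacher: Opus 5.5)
Your proposal is correct and follows essentially the same route as the paper. Necessity tests $H^*$ on $\mathbf 1_{\{z\}}$ at the level $\frac12 k^{-t}$. Sufficiency covers the level set by the layers $D_m(z)$, $0\le m\le t(z)$, over those $z$ with $|f(z)|>\lambda$; your $t(z)$ is exactly the paper's $n_z$.
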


\begin{proof}
We first prove the necessity.
Suppose that $z\in T$ and $t\in\mathbb{Z}_+$.
If $m\in [0,t]\cap\mathbb{Z}_+$, then, for any
$x\in D_m(z)$,
$$
H^*(\mathbf{1}_{\{z\}})(x)\geq\frac{1}{ k^{m}}\geq \frac{1}{ k^{t}},
$$
and hence
$$
\bigcup_{m=0}^{t}D_m(z)\subset \left\{x\in T:\,
H^*(\mathbf{1}_{\{z\}})(x)>\frac{1}{2k^{t}}\right\}.
$$
The assumed weak-type boundedness of $H^*$, together with the disjointness of
$\{D_m(z)\}_{m\in\mathbb{Z}_+}$, yields
$$
\sum_{m=0}^{t}w(D_m(z))
\lesssim k^{pt}\|\mathbf{1}_{\{z\}}\|_{L^p(w)}^p
=k^{pt}w(z),
$$
where the implicit positive constant is independent of $z$ and $t$.
This completes the proof of \eqref{eq-Sh-Wp}, and hence the necessity.

Next, we show the sufficiency.  Let $f\in L^p(w)$, and
let $\lambda\in(0,\infty)$.  If $H^*f(x)>\lambda$ for some $x\in T$,
then there is an ancestor $z$ of $x$, with $m=d(z,x)\in[0,|x|]$, such that
$
\frac{1}{k^{m}}|f(z)|>\lambda.
$
In particular, $|f(z)|>\lambda$.  Thus, the level set
$E_\lambda$ of $H^*$ can be covered as follows:
$$
E_\lambda:=\left\{x\in T:\, H^*f(x)>\lambda\right\}
\subset\bigcup_{z\in T:\,|f(z)|>\lambda}\bigcup_{0\leq m\leq n_z}D_m(z),
$$
where $n_z$ is the largest non-negative integer satisfying
$\frac{1}{k^{n_z}}|f(z)|>\lambda$.  Applying the assumption \eqref{eq-Sh-Wp},
we obtain
\begin{align*}
w(E_\lambda)\le\sum_{z\in T:\,|f(z)|>\lambda}\sum_{m=0}^{n_z}w(D_m(z))
\lesssim\sum_{z\in T:\,|f(z)|>\lambda}k^{pn_z}w(z)
\leq \frac{1}{\lambda^{p}}\sum_{z\in T}|f(z)|^p w(z),
\end{align*}
where the last inequality follows from the definition of $n_z$.
Therefore, $H^*$ is bounded from $L^p(w)$ to $L^{p,\infty}(w)$.
This completes the proof of Lemma \ref{lem:Manc-weak}.
\end{proof}

\begin{lemma}\label{lem:weak-directional-synthesis}
Let $p\in(0,\infty)$ and let $w$ be a weight on $T$.
Suppose that $H^*$ is bounded from $L^p(w)$ to
$L^{p,\infty}(w)$ and that there exist positive constants $C$
and $\theta\in(0,1)$ such that, for any $m\in\mathbb Z_+$ and $f\in L^p(w)$,
\begin{align}\label{eq-Pm-decay-weak-synthesis}
\|P_mf\|_{L^p(w)}\leq C\theta^m\|f\|_{L^p(w)}.
\end{align}
Then $M$ is bounded from $L^p(w)$ to $L^{p,\infty}(w)$.
\end{lemma}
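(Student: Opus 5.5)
The plan is to combine the pointwise decomposition of Lemma \ref{lem:strong-pointwise} with the two hypotheses: a weak-type bound for the ancestor direction coming from $H^*$, and a strong-type exponential decay in the descendant direction. By \eqref{eq-equivalent-M} it suffices to treat $M^\circ$. By Lemma \ref{lem:strong-pointwise}, for any $x\in T$,
\begin{align*}
M^\circ f(x)\lesssim\sup_{r\in\mathbb Z_+}\sum_{j+m=r}Q_j(P_mf)(x)
\leq\sum_{m\in\mathbb Z_+}\sup_{j\in\mathbb Z_+}Q_j(P_mf)(x)
=\sum_{m\in\mathbb Z_+}H^*(P_mf)(x).
\end{align*}
The whole argument therefore reduces to controlling $\sum_m H^*(P_mf)$ in $L^{p,\infty}(w)$.

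Each term is easy on its own. The weak-type boundedness of $H^*$ together with \eqref{eq-Pm-decay-weak-synthesis} gives
\[
\|H^*(P_mf)\|_{L^{p,\infty}(w)}\lesssim\|P_mf\|_{L^p(w)}\lesssim\theta^m\|f\|_{L^p(w)}.
\]
The issue is summing infinitely many terms in $L^{p,\infty}(w)$, which is only a quasi-normed space. I will avoid relying on abstract quasi-triangle constants and instead distribute the level. Fix $\eta\in(\theta,1)$ and normalize $\sum_m(1-\eta)\eta^m=1$. If $\sum_m H^*(P_mf)(x)>\lambda$, then for some $m$ we must have $H^*(P_mf)(x)>(1-\eta)\eta^m\lambda$. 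Hence
\begin{align*}
w(\{M^\circ f>C\lambda\})
\leq\sum_{m\in\mathbb Z_+}w\left(\left\{H^*(P_mf)>(1-\eta)\eta^m\lambda\right\}\right)
\lesssim\sum_{m\in\mathbb Z_+}\frac{\theta^{mp}\|f\|_{L^p(w)}^p}{(1-\eta)^p\eta^{mp}\lambda^p}.
\end{align*}
Because $\theta/\eta<1$, the series converges, and we obtain $\|Mf\|_{L^{p,\infty}(w)}\lesssim\|f\|_{L^p(w)}$ for every $p\in(0,\infty)$.

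The main point to watch is the interchange $\sup_r\sum_{j+m=r}\leq\sum_m\sup_j$. This step is legitimate because every term is nonnegative and each pair $(j,m)$ with $j+m=r$ appears in the double sum. We also need that $Q_j$ vanishes when $j>|x|$, so that $\sup_j Q_j$ coincides with $H^*$. No other obstacle is expected.
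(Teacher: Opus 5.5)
Your proposal is correct and follows the paper's proof essentially step for step. Both arguments use the pointwise bound $M^\circ f\lesssim\sum_{m\in\mathbb Z_+}H^*(P_mf)$ obtained from \eqref{eq-Ar-QP}, split the level as $\lambda_m=(1-\eta)\eta^m\lambda$ with $\eta\in(\theta,1)$, and sum the geometric series in $(\theta/\eta)^{mp}$; your explicit checks that $\sup_r\sum_{j+m=r}\le\sum_m\sup_j$ and that $\sup_jQ_j=H^*$ are details the paper leaves implicit.
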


\begin{proof}
It follows from \eqref{eq-Ar-QP} that,
for any $f\in L^p(w)$ and $x\in T$,
\begin{align}\label{eq-Mcirc-sum-Hstar-Pm}
M^\circ f(x)\lesssim
\sum_{m\in\mathbb Z_+}H^*(P_mf)(x).
\end{align}
Let $C$ denote the implicit positive constant in
\eqref{eq-Mcirc-sum-Hstar-Pm}.
Take $\eta\in(\theta,1)$ and,
for any $\lambda\in(0,\infty)$
and $m\in\mathbb{Z}_+$, let
$$
\lambda_m:=(1-\eta)\eta^m\lambda.
$$
Using the fact that $\sum_{m\in\mathbb{Z}_+}\lambda_m=\lambda$
for any $\lambda\in(0,\infty)$,
the assumed weak-type boundedness of
$H^*$, and \eqref{eq-Pm-decay-weak-synthesis}, we conclude that,
for any $f\in L^p(w)$ and $\lambda\in (0,\infty)$,
\begin{align*}
w(\{x\in T:\,M^\circ f(x)>C\lambda\})
&\leq\sum_{m\in\mathbb Z_+}
w\left(\left\{x\in T:\,H^*(P_mf)(x)>\lambda_m\right\}\right)\\
&\lesssim\frac{1}{\lambda^{p}}\sum_{m\in\mathbb Z_+}
\frac{1}{\eta^{mp}}\|P_mf\|_{L^p(w)}^p\\
&\lesssim\frac{1}{\lambda^{p}}\|f\|_{L^p(w)}^p
\sum_{m\in\mathbb Z_+}\left(\frac{\theta}{\eta}\right)^{mp}
\lesssim\frac{1}{\lambda^{p}}\|f\|_{L^p(w)}^p.
\end{align*}
Since $M$ and $M^\circ$ are pointwise equivalent, we infer that
$M$ is bounded from $L^p(w)$ to $L^{p,\infty}(w)$. This
completes the proof of Lemma \ref{lem:weak-directional-synthesis}.
\end{proof}

We are now ready to prove Theorem \ref{thm-bound-radial}.

\begin{proof}[Proof of Theorem \ref{thm-bound-radial}]
We first prove (i). Suppose $N\in\mathbb Z_+$ and $y\in T_N$.
For any $j\in[0,N]\cap\mathbb Z_+$ and $m\in\mathbb Z_+$, let
\begin{align}\label{eq-Gammajm}
\Gamma_{j,m}(y):=\left\{x\in T:\,
d(y,x\wedge y)=j \text{ and } d(x,x\wedge y)=m\right\}.
\end{align}
Observe that the sets
$\{\Gamma_{j,m}(y)\}_{j\in[0,N]\cap\mathbb Z_+,m\in\mathbb{Z}_+}$
are mutually disjoint and
\begin{align}\label{eq-T-sum}
T=\bigcup_{j=0}^{N}\bigcup_{m\in\mathbb Z_+}\Gamma_{j,m}(y).
\end{align}
By the branch structure of $T$, we find that,
for any $j\in[0,N]\cap\mathbb Z_+$, $m\in\mathbb{Z}_+$,
and $x\in \Gamma_{j,m}(y)$, $d(x,y)=j+m$, $|x|=N-j+m$, and
\begin{align*}
\left|\Gamma_{j,m}(y)\right|=
\begin{cases}
k^m
&\text{ if }jm=0,\\
(k-1)k^{m-1}
&\text{ if }jm>0.
\end{cases}
\end{align*}
Using this and the fact that $w$ is radial, we conclude that
\begin{align}\label{eq-radial-block-weight}
w(\Gamma_{j,m}(y))=\left|\Gamma_{j,m}(y)\right|
\frac{W_{N-j+m}}{k^{N-j+m}}\sim\frac{k^j}{k^N}W_{N-j+m}.
\end{align}
We first prove (i) in the case $p\in(\frac{1}{2},1]$.
In this case, from Theorem \ref{thm-atomic-characterization},
\eqref{eq-radial-block-weight}, and \eqref{eq-T-sum},
we infer that $M$ is bounded on $L^p(w)$ if and only if
\begin{align}\label{eq-radial-double-strong}
\sup_{N\in\mathbb Z_+}\frac{1}{W_N}
\sum_{j=0}^{N}k^{(1-p)j}
\sum_{m\in\mathbb Z_+}\frac{1}{k^{pm}}W_{N-j+m}<\infty.
\end{align}
Observe that \eqref{eq-radial-double-strong} is equivalent to
\eqref{eq-radial-2} and \eqref{eq-radial-1}.
This completes the proof of (i) in the case
$p\in(\frac{1}{2},1]$.

We next prove (i) in the case $p\in(1,\infty)$.
To show its necessity, assume that $M$ is
bounded on $L^p(w)$. Note that Lemma
\ref{lem:radial-upward-necessary} yields \eqref{eq-radial-1}.
To prove the necessity of \eqref{eq-radial-2}, fix $z\in T$.
From \eqref{eq-test-M-onepoint}, we deduce that,
for any $m\in\mathbb Z_+$ and $x\in D_m(z)$,
\begin{align*}
\frac{1}{k^m}\lesssim M(\mathbf1_{\{z\}})(x).
\end{align*}
Since the sets $\{D_m(z)\}_{m\in\mathbb Z_+}$ are pairwise
disjoint, the assumed strong-type boundedness of $M$ yields
\begin{align}\label{eq-Shadow-restate}
\sum_{m\in\mathbb Z_+}\frac{1}{k^{pm}}w(D_m(z))
\lesssim\|M(\mathbf1_{\{z\}})\|_{L^p(w)}^p
\lesssim\|\mathbf1_{\{z\}}\|_{L^p(w)}^p=w(z).
\end{align}
Using the assumption that $w$ is radial, we conclude that,
for any $m,N\in\mathbb{Z}_+$ and $z\in T_N$,
\begin{align}\label{eq-radial-Dm}
w(D_m(z))=\frac{W_{N+m}}{k^N}
\text{ and }w(z)=\frac{W_N}{k^N}.
\end{align}
Thus, \eqref{eq-Shadow-restate} is precisely \eqref{eq-radial-2}.
This completes the proof of necessity of (i) in the case $p\in(1,\infty)$.
To prove its sufficiency in this case,
assume that \eqref{eq-radial-1} and \eqref{eq-radial-2} hold.
Note that Lemma \ref{lem:radial-Pm-decay} yields the
exponential decay of $\|P_m\|_{L^p(w)\to L^p(w)}$.
Moreover, by \eqref{eq-radial-Dm}, \eqref{eq-radial-2}, and
Tonelli's theorem, we find that, for any $f\in L^p(w)$,
\begin{align*}
\sum_{m\in\mathbb Z_+}\|Q_mf\|_{L^p(w)}^p
=\sum_{z\in T}|f(z)|^p\sum_{m\in\mathbb Z_+}
\frac{1}{k^{pm}}w(D_m(z))\lesssim\|f\|_{L^p(w)}^p,
\end{align*}
which, together with Lemma \ref{lem-directional-decay},
further implies the exponential decay of
$\|Q_m\|_{L^p(w)\to L^p(w)}$. From these
exponential decay estimates and \eqref{eq-Ar=PmQj},
we infer that \eqref{eq-Ar-exponential} holds.
Applying Theorem \ref{thm-Bound-Ap}, we obtain the boundedness of
$M$ on $L^p(w)$. This completes the proof of (i).

We next prove (ii). Observe that, for any
$N\in\mathbb{Z}_+$, $y\in T_N$, and $r\in\mathbb{Z}_+$,
\begin{align*}
B(y,r)=\bigcup_{j=0}^{\min\{N,r\}}\bigcup_{m=0}^{r-j}\Gamma_{j,m}(y),
\end{align*}
where $\Gamma_{j,m}(y)$ is as in \eqref{eq-Gammajm}.
Using the disjointness of $\{\Gamma_{j,m}(y)
\}_{j\in[0,N]\cap\mathbb Z_+,m\in\mathbb{Z}_+}$
and \eqref{eq-radial-block-weight}, we find that
\begin{align*}
w(B(y,r))=\sum_{j=0}^{\min\{N,r\}}\sum_{m=0}^{r-j}w(\Gamma_{j,m}(y))
\sim\frac{1}{k^N}\sum_{j=0}^{\min\{N,r\}}k^j\sum_{m=0}^{r-j}W_{N-j+m}.
\end{align*}
By Proposition \ref{prop:weak-atomic-characterization},
we find that (ii) holds.

Finally, we prove (iii). To show its necessity,
assume that $M$ is bounded from $L^p(w)$ to $L^{p,\infty}(w)$.
Observe that Lemma \ref{lem:radial-upward-necessary} guarantees \eqref{eq-radial-1}.
Since $H^*f\lesssim M^\circ f$,
Lemma \ref{lem:Manc-weak} and \eqref{eq-radial-Dm} yield
\eqref{eq-radial-3}, which completes the proof of the necessity.
To prove the sufficiency of (iii), suppose that \eqref{eq-radial-1} and
\eqref{eq-radial-3} hold. It follows from Lemma \ref{lem:Manc-weak}
that $H^*$ is bounded from $L^p(w)$ to $L^{p,\infty}(w)$.
Note that Lemma \ref{lem:radial-Pm-decay} yields
the exponential decay in \eqref{eq-Pm-decay-weak-synthesis}.
Applying Lemma \ref{lem:weak-directional-synthesis}, we obtain
the boundedness of $M$ from $L^p(w)$ to $L^{p,\infty}(w)$.
This proves (iii) and completes the proof of
Theorem \ref{thm-bound-radial}.
\end{proof}

In the next corollary, we obtain the precise ranges of $\beta\in\mathbb{R}$
such that $M$ is bounded on $L^p(w_\beta)$ and bounded
from $L^p(w_\beta)$ to $L^{p,\infty}(w_\beta)$ for any $p\in(\frac{1}{2},\infty)$,
where $w_\beta$ is as in \eqref{eq-def-radial-beta}.

\begin{corollary}\label{cor:exponential-radial}
Let $p\in(\frac{1}{2},\infty)$ and $\beta\in\mathbb R$.
The following two assertions hold.
\begin{itemize}
\item[{\rm (i)}] $M$ is bounded on
$L^p(w_\beta)$ if and only if $\beta\in(-\min\{p,1\}, p-1)$.

\item[{\rm (ii)}] $M$ is bounded from
$L^p(w_\beta)$ to $L^{p,\infty}(w_\beta)$ if and only if
either $p\in(\frac{1}{2},1)$ and $\beta\in[-p, p-1]$, or
$p\in[1,\infty)$ and $\beta\in(-1, p-1]$.
\end{itemize}
\end{corollary}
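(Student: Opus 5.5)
The plan is to specialize Theorem \ref{thm-bound-radial} to $w=w_\beta$. This is a radial weight with $a_N=k^{\beta N}$, so
\begin{align*}
W_N=k^{(1+\beta)N}\quad\text{and}\quad \frac{W_{N+\ell}}{W_N}=k^{(1+\beta)\ell}
\end{align*}
for every $N\in\mathbb Z_+$ and every integer $\ell\geq -N$. Each condition in Theorem \ref{thm-bound-radial} then becomes a supremum of finite or infinite sums of powers of $k$ with exponents linear in the summation indices. It therefore reduces to sign conditions on those exponents, plus a check of the equality cases. As a cross-check, (i) also follows from Theorem \ref{thm-Bound-Ap}(I) together with \eqref{eq-rangeradial-p>1}. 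Indeed, $w_\beta\in\mathscr A_p$ if and only if $\beta\in(-\min\{p,1\},p-1)$, as already noted in the proof of Proposition \ref{prop:exp-class-structure}.

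For (i), I verify \eqref{eq-radial-1} and \eqref{eq-radial-2} directly. Condition \eqref{eq-radial-1} reads
\begin{align*}
\sup_{N}\sum_{j=0}^N k^{j[(1-p)_+-1-\beta]}<\infty,
\end{align*}
which holds if and only if $(1-p)_+-1-\beta<0$, that is, $\beta>-\min\{p,1\}$. If the exponent is $\geq 0$, the sum grows at least like $N+1$. Condition \eqref{eq-radial-2} reads
\begin{align*}
\sum_{m}k^{m(1+\beta-p)}<\infty,
\end{align*}
which holds if and only if $\beta<p-1$.

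For (ii) with $p\in[1,\infty)$, I use Theorem \ref{thm-bound-radial}(iii). Here \eqref{eq-radial-1} with $(1-p)_+=0$ holds if and only if $\beta>-1$. Given $\beta>-1$, the quantity in \eqref{eq-radial-3} is
\begin{align*}
k^{-pt}\sum_{m=0}^t k^{(1+\beta)m}\sim k^{(1+\beta-p)t},
\end{align*}
which is bounded in $t$ if and only if $\beta\leq p-1$. This gives $\beta\in(-1,p-1]$.

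For (ii) with $p\in(\frac12,1)$, I use Theorem \ref{thm-bound-radial}(ii). The summand of \eqref{eq-radial-weak-subone} is $k^{-pt+j+(1+\beta)(m-j)}$. The argument splits on the sign of $1+\beta$.
\begin{itemize}
\item[(a)] If $1+\beta>0$, the inner sum over $m\in[0,t-j]$ is comparable to its top term $m=t-j$. The quantity then becomes, up to constants,
\begin{align*}
\sum_{j=0}^{\min\{N,t\}}k^{(1+\beta-p)t-(1+2\beta)j}.
\end{align*}
Taking $N\geq t$ and looking at the endpoint terms $j=0$ and $j=t$ forces $\beta\leq p-1$ and $\beta\geq -p$, respectively.
\item[(b)] If $1+\beta\leq0$, the term $j=t$, $m=0$ gives $k^{-(\beta+p)t}$. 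This is unbounded since $\beta\le -1<-p$, so these $\beta$ are excluded.
\end{itemize}

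The main obstacle is sufficiency at the closed endpoints $\beta=p-1$ and $\beta=-p$ in case (a), where a polynomial factor in $t$ could ruin boundedness. I would check that none appears. At $\beta=p-1$, the coefficient of $j$ is $-(1+2\beta)=-(2p-1)<0$, so the sum in $j$ is geometric and dominated by $j=0$, whose exponent is $0$. At $\beta=-p$, we have $1+\beta=1-p>0$, so the $m$-sum is still geometric, and the $j$-coefficient is $2p-1>0$. The $j$-sum is then dominated by $j=\min\{N,t\}$, with total exponent $(1-2p)(t-\min\{N,t\})\leq 0$. For $\beta$ strictly inside $[-p,p-1]$, the exponent is at most $\max\{(1+\beta-p)t,(-\beta-p)t\}\le0$ with geometric sums, so the supremum is finite. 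Thus for $p\in(\frac12,1)$, weak-type boundedness holds exactly when $\beta\in[-p,p-1]$.
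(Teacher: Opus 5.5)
Your proof is correct and follows essentially the paper's route. You specialize Theorem \ref{thm-bound-radial} with $W_N=k^{(1+\beta)N}$ and read off (i) and the $p\ge 1$ weak case from the signs of the exponents; for $p\in(\frac12,1)$ you take $N\ge t$ and use the $j=0$ and $j=t$ terms for necessity, then geometric summation for sufficiency.

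One correction to where you located the difficulty. At the endpoints $\beta=p-1$ and $\beta=-p$, the $j$-sum is a genuine geometric sum, with ratio $k^{\mp(2p-1)}\neq 1$. The only place a polynomial factor actually appears is the interior value $\beta=-\frac12$. There $1+2\beta=0$, so the quantity is $(\min\{N,t\}+1)k^{(\frac12-p)t}$, and your phrase ``with geometric sums'' does not apply. This quantity is still bounded because $p>\frac12$, exactly as in the paper's separate case $c=\frac12$.
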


\begin{proof}
Observe that, for any $N\in\mathbb Z_+$,
\begin{align}\label{eq-WN}
W_N=w_\beta(T_N)=k^{(1+\beta)N}.
\end{align}
The main idea of this proof is to use this identity
to reduce the conditions in Theorem \ref{thm-bound-radial}
to the optimal range conditions on $\beta$. Let $c:=1+\beta$.
Note that, for any $N\in\mathbb Z_+$,
\begin{align}\label{eq-reduction-WN}
\frac{1}{W_N}\sum_{j=0}^{N}k^{(1-p)_+j}W_{N-j}
=\sum_{j=0}^{N}k^{[(1-p)_+-c]j}
\end{align}
and
\begin{align*}
\frac{1}{W_N}\sum_{m\in\mathbb Z_+}
\frac{1}{k^{pm}}W_{N+m}
=\sum_{m\in\mathbb Z_+}k^{(c-p)m}.
\end{align*}
From these two identities, we deduce that
\eqref{eq-radial-1} and \eqref{eq-radial-2}
are equivalent to $c\in((1-p)_+, p)$,
which completes the proof of (i).

We next prove (ii) by considering two
cases for the range of $p$.

\emph{Case (1)} $p\in(\frac{1}{2},1)$. In this case
we first show the necessity.
By taking $N\geq t$ in \eqref{eq-radial-weak-subone}
and by \eqref{eq-WN}, we find that, for any $t\in\mathbb{Z}_+$,
\begin{align*}
\sum_{j=0}^{t}k^{(1-c)j}
\sum_{m=0}^{t-j}k^{cm}\lesssim k^{pt}.
\end{align*}
Retaining only the terms corresponding to $j=0$ and $j=t$ and applying
the arbitrariness of $t$, we obtain $c\in[1-p, p]$.
Conversely, if $c\in[1-p, p]$, then $c\in(0,1)$.
From a geometric summation,
we infer that, for any $N, t\in\mathbb{Z}_+$,
\begin{align*}
\sum_{j=0}^{\min\{N,t\}}k^{(1-c)j}
\sum_{m=0}^{t-j}k^{cm}
&\lesssim \begin{cases}
k^{(1-c)t}&\text{ if }c\in(0,\frac{1}{2}),\\
(t+1)k^{\frac{t}{2}}&\text{ if }c=\frac{1}{2},\\
k^{ct}&\text{ if }c\in(\frac{1}{2},1)
\end{cases}\\
&\lesssim k^{pt},
\end{align*}
which, together with \eqref{eq-WN}, further implies \eqref{eq-radial-weak-subone}.
Applying Theorem \ref{thm-bound-radial}, we conclude that
$M$ is bounded from $L^p(w_\beta)$ to $L^{p,\infty}(w_\beta)$.
This completes the proof of (ii) in this case.

\emph{Case (2)} $p\in[1,\infty)$.
In this case, it follows from \eqref{eq-reduction-WN} that
\eqref{eq-radial-1} is equivalent to $c\in(0,\infty)$.
By \eqref{eq-WN}, we find that \eqref{eq-radial-3}
is equivalent to $c\in(-\infty,p]$.
This completes the proof of (ii) in this case, and hence
Corollary \ref{cor:exponential-radial}.
\end{proof}

\begin{example}\label{exam-p<1}
Let $p\in(\frac{1}{2},1]$, $\beta\in(-p,p-1)$, and
$\gamma\in\mathbb R$. For any $x\in T$, let
\begin{align*}
w_{\beta,\gamma}(x):=k^{\beta|x|}(1+|x|)^\gamma.
\end{align*}
Then $M$ is bounded on $L^p(w_{\beta,\gamma})$.
\end{example}

\begin{proof}
To prove the boundedness of $M$, by Theorem \ref{thm-bound-radial},
it suffices to verify that $w_{\beta,\gamma}$ satisfies
\eqref{eq-radial-1} and \eqref{eq-radial-2}. For this purpose, observe that,
for any $N\in\mathbb Z_+$,
\begin{align*}
W_N=w_{\beta,\gamma}(T_N)
=k^{(1+\beta)N}(N+1)^\gamma.
\end{align*}
Using the fact that $\beta\in(-p,p-1)$, we conclude that,
for any $N\in\mathbb Z_+$,
\begin{align*}
\frac{1}{W_N}\sum_{j=0}^{N}k^{(1-p)j}W_{N-j}
&=\sum_{j=0}^{N}
\frac{1}{k^{(p+\beta)j}}
\left(\frac{N-j+1}{N+1}\right)^\gamma\\
&\leq\sum_{j=0}^{N}\frac{1}{k^{(p+\beta)j}}
(1+j)^{|\gamma|}\leq\sum_{j\in\mathbb{Z}_+}
\frac{1}{k^{(p+\beta)j}}(1+j)^{|\gamma|}<\infty
\end{align*}
and
\begin{align*}
\frac{1}{W_N}\sum_{m\in\mathbb Z_+}
\frac{1}{k^{pm}}W_{N+m}&=\sum_{m\in\mathbb Z_+}
\frac{1}{k^{(p-1-\beta)m}}
\left(\frac{N+m+1}{N+1}\right)^\gamma\\
&\leq\sum_{m\in\mathbb Z_+}
\frac{1}{k^{(p-1-\beta)m}}
\left(1+m\right)^{|\gamma|}<\infty.
\end{align*}
Thus, \eqref{eq-radial-1} and
\eqref{eq-radial-2} are satisfied,
and hence the boundedness of $M$ on $L^p(w_{\beta,\gamma})$ holds.
\end{proof}

\section{Applications}\label{sec:applications}

In this section, we present several applications
of the exponential decay obtained in Theorem \ref{thm-Bound-Ap}.
To be precise, for any $p\in(\frac{1}{2},\infty)$,
under the assumption that $M$ is bounded on $L^p(w)$,
we obtain the boundedness of exponentially decaying kernel operators
on $L^p(w)$ and pointwise exponential decay of spherical
and ball averages in Subsection \ref{5.1},
and weighted Fefferman--Stein vector-valued inequalities
in Subsection \ref{5.2}.

\subsection{Exponentially Decaying Kernel Operators\label{5.1}}

In this subsection, for any $p\in(\frac{1}{2},\infty)$,
under the assumption that $M$ is bounded on $L^p(w)$, we
establish the boundedness of
exponentially decaying kernel operators on $L^p(w)$.
Moreover, we also prove pointwise exponential decay
for both spherical and ball averages.

For any $t\in\mathbb Z_+$, the operator $\mathscr R_t$ is
defined by setting, for any function $f$ on $T$
and for any $x\in T$,
\begin{align*}
\mathscr R_t f(x):=
\sum_{r\in\mathbb Z_+}(r+1)^t A_r^\circ f(x),
\end{align*}
where $A_r^\circ$ is as in \eqref{eq-def-Acirc}.
When $t=0$, we simply write $\mathscr R:=\mathscr R_0$.
The following proposition shows that
the boundedness of $M$ guarantees the boundedness of
$\mathscr R_t$ for any $t\in\mathbb Z_+$.
\begin{proposition}\label{prop:spherical-summability}
Let $p\in(\frac{1}{2},\infty)$, and let $w$ be a weight on $T$.
If $M$ is bounded on $L^p(w)$, then, for any
$t\in\mathbb Z_+$, the operator $\mathscr R_t$ is bounded
on $L^p(w)$.
\end{proposition}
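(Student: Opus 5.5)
By Theorem \ref{thm-Bound-Ap}, the boundedness of $M$ on $L^p(w)$ gives some $\varepsilon\in(0,\infty)$ for which \eqref{eq-Ar-exponential} holds. That is, there is a constant $C$ with $\|A_r^\circ f\|_{L^p(w)}\le C k^{-\varepsilon r}\|f\|_{L^p(w)}$ for all $r\in\mathbb Z_+$ and $f\in L^p(w)$. The plan is to sum this exponential decay against the polynomial weight $(r+1)^t$. Since every $A_r^\circ f$ is nonnegative, $\mathscr R_t f$ is a pointwise sum of nonnegative terms. We split into the two cases $p\ge1$ and $p\in(\frac12,1)$ only to choose the right triangle-type inequality.

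\emph{Case $p\in[1,\infty)$.} We apply Minkowski's inequality to the countable sum; for nonnegative functions this is justified by monotone convergence. This gives
\begin{align*}
\|\mathscr R_t f\|_{L^p(w)}\le\sum_{r\in\mathbb Z_+}(r+1)^t\|A_r^\circ f\|_{L^p(w)}
\le C\sum_{r\in\mathbb Z_+}(r+1)^t k^{-\varepsilon r}\|f\|_{L^p(w)}.
\end{align*}
The series converges because $k\ge2$ and $\varepsilon>0$.

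\emph{Case $p\in(\frac12,1)$.} We use \eqref{eq-embeding-l1} pointwise with $\gamma=p$, which gives $[\mathscr R_tf(x)]^p\le\sum_r(r+1)^{tp}[A_r^\circ f(x)]^p$. Multiplying by $w(x)$, summing over $x\in T$, and applying Tonelli's theorem, we obtain
\begin{align*}
\|\mathscr R_tf\|_{L^p(w)}^p\le\sum_{r\in\mathbb Z_+}(r+1)^{tp}\|A_r^\circ f\|_{L^p(w)}^p
\le C^p\sum_{r\in\mathbb Z_+}(r+1)^{tp}k^{-\varepsilon pr}\|f\|_{L^p(w)}^p,
\end{align*}
and again the series is finite.

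There is no genuine obstacle, because all the difficulty is contained in Theorem \ref{thm-Bound-Ap}(iii). The only points to check are that $\varepsilon$ does not depend on $r$, which holds because it comes from the theorem, and the routine justification for interchanging sums with nonnegative terms. As a byproduct, the bound is of the form $C_t\|f\|_{L^p(w)}$, with $C_t$ depending on $t$, $p$, $\varepsilon$, and the implicit constant in \eqref{eq-Ar-exponential}.
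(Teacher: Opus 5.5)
Your proposal is correct and follows the same route as the paper: it invokes the exponential decay \eqref{eq-Ar-exponential} from Theorem \ref{thm-Bound-Ap}. It then sums against $(r+1)^t$ using Minkowski's inequality when $p\in[1,\infty)$, and using \eqref{eq-embeding-l1} with Tonelli's theorem when $p\in(\frac12,1)$.
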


\begin{proof}
By Theorem \ref{thm-Bound-Ap}, there exists
$\varepsilon\in(0,\infty)$ such that, for any
$r\in\mathbb Z_+$ and $f\in L^p(w)$,
\begin{align*}
\left\|A_r^\circ f\right\|_{L^p(w)}
\lesssim\frac{1}{k^{\varepsilon r}}\|f\|_{L^p(w)}.
\end{align*}
If $p\in[1,\infty)$, then, from Minkowski's inequality,
we infer that, for any $f\in L^p(w)$,
\begin{align*}
\left\|\mathscr R_t f\right\|_{L^p(w)}\leq
\sum_{r\in\mathbb Z_+}
(r+1)^t\left\|A_r^\circ f\right\|_{L^p(w)}
\lesssim\sum_{r\in\mathbb Z_+}
\frac{(r+1)^t}{k^{\varepsilon r}}\|f\|_{L^p(w)}
\lesssim\|f\|_{L^p(w)}.
\end{align*}
If $p\in(\frac{1}{2},1)$,
using \eqref{eq-embeding-l1} and Tonelli's theorem,
we conclude that, for any $f\in L^p(w)$,
\begin{align*}
\left\|\mathscr R_t f\right\|_{L^p(w)}^p
\leq\sum_{r\in\mathbb Z_+}(r+1)^{t p}
\left\|A_r^\circ f\right\|_{L^p(w)}^p
\lesssim\sum_{r\in\mathbb Z_+}
\frac{(r+1)^{t p}}{k^{\varepsilon pr}}
\|f\|_{L^p(w)}^p\lesssim\|f\|_{L^p(w)}^p.
\end{align*}
This completes the proof of Proposition \ref{prop:spherical-summability}.
\end{proof}

We now apply Proposition \ref{prop:spherical-summability} to
prove the boundedness of exponentially decaying kernel operators.
Suppose that $K$ is a function on $T\times T$ such that
there exist $t\in\mathbb{Z}_+$ and a positive constant $C$ such that,
for any $x,y\in T$,
\begin{align}\label{eq-decaying-kernel-condition}
|K(x,y)|\leq
C\frac{[d(x,y)+1]^t}{k^{d(x,y)}}.
\end{align}
The \emph{exponentially decaying kernel operator $\mathscr{T}_K$
associated with $K$} is defined by setting,
for any finitely supported function $f$ on $T$
and for any $x\in T$,
\begin{align*}
\mathscr{T}_Kf(x):=\sum_{y\in T}K(x,y)f(y).
\end{align*}

\begin{theorem}\label{thm:decaying-kernel}
Let $p\in(\frac{1}{2},\infty)$, and let $w$ be a weight on $T$ such
that $M$ is bounded on $L^p(w)$. Suppose that $K$ is a function
on $T\times T$ satisfying \eqref{eq-decaying-kernel-condition}.
Then $\mathscr T_K$ can uniquely extend to a bounded linear operator on $L^p(w)$.
\end{theorem}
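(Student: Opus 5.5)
The plan is to dominate $\mathscr T_K$ pointwise by $\mathscr R_t$ and then apply Proposition \ref{prop:spherical-summability}. Fix a finitely supported $f$ and $x\in T$. Group the sum over $y$ according to the distance $r:=d(x,y)$, and use \eqref{eq-decaying-kernel-condition} together with \eqref{eq-sphere-ball-growth}, namely $|S(x,r)|\sim k^r$. This gives
\begin{align*}
|\mathscr T_Kf(x)|\le C\sum_{r\in\mathbb Z_+}\frac{(r+1)^t}{k^r}\sum_{y\in S(x,r)}|f(y)|
\lesssim\sum_{r\in\mathbb Z_+}(r+1)^tA_r^\circ f(x)=\mathscr R_tf(x).
\end{align*}
The implicit constant depends only on $C$ and not on $x$ or $f$. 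Proposition \ref{prop:spherical-summability} applies because $M$ is bounded on $L^p(w)$ with $p\in(\frac12,\infty)$. Consequently, for every finitely supported $f$,
\begin{align*}
\|\mathscr T_Kf\|_{L^p(w)}\lesssim\|\mathscr R_tf\|_{L^p(w)}\lesssim\|f\|_{L^p(w)}.
\end{align*}

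Next comes the extension. Finitely supported functions are dense in $L^p(w)$ for every $p\in(0,\infty)$: if $f\in L^p(w)$ and $F_n$ is an increasing exhaustion of $T$ by finite sets, then $\|f-f\mathbf 1_{F_n}\|_{L^p(w)}^p=\sum_{x\notin F_n}|f(x)|^pw(x)\to0$. The operator $\mathscr T_K$ is linear on this dense subspace and satisfies the bound above. For $p\ge1$, $L^p(w)$ is a Banach space, so $\mathscr T_K$ extends uniquely by continuity. For $p\in(\frac12,1)$, $L^p(w)$ is a complete quasi-Banach space with the $p$-metric $\|f-g\|_{L^p(w)}^p$, and the same uniform-continuity argument gives a unique bounded extension. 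Any two continuous extensions agree on a dense set, so they coincide everywhere.

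No step here is a serious obstacle, because all the real work is contained in Theorem \ref{thm-Bound-Ap}, which gives the exponential decay of $\|A_r^\circ\|$, and in Proposition \ref{prop:spherical-summability}. The only point needing attention is the case $p<1$, where one should note explicitly that the extension argument uses the quasi-norm (equivalently the $p$-metric) rather than Minkowski's inequality.

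One could also ask whether the extension agrees with the series $\sum_yK(x,y)f(y)$ for general $f\in L^p(w)$. The domination by $\mathscr R_t|f|$ shows that this series converges absolutely wherever $\mathscr R_t|f|(x)<\infty$, which holds at every $x$ because $\mathscr R_t|f|\in L^p(w)$ and $w>0$ pointwise. Dominated convergence then identifies the extension with this series. This identification is not required by the statement, however.
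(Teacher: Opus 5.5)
Your proposal is correct and follows essentially the same route as the paper: split the sum over $y$ into the spheres $S(x,r)$, use \eqref{eq-decaying-kernel-condition} and $|S(x,r)|\lesssim k^r$ to obtain $|\mathscr T_Kf|\lesssim\mathscr R_tf$ for finitely supported $f$, invoke Proposition \ref{prop:spherical-summability}, and then extend by density. Your additional remarks are valid refinements that the paper leaves implicit, namely the use of the $p$-metric when $p<1$ and the identification of the extension with the absolutely convergent series $\sum_y K(x,y)f(y)$.
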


\begin{proof}
Let $f$ be a finitely supported function on $T$.
Suppose that there exists $t\in\mathbb{Z}_+$
such that \eqref{eq-decaying-kernel-condition} holds.
For any $x\in T$, by decomposing $T$ into spheres centered
at $x$ and applying \eqref{eq-decaying-kernel-condition}, we obtain
\begin{align*}
|\mathscr T_Kf(x)|
\lesssim\sum_{r\in\mathbb Z_+}
\frac{(r+1)^t}{k^r}
\sum_{y\in S(x,r)}\left|f(y)\right|
=\sum_{r\in\mathbb Z_+}
(r+1)^t\frac{|S(x,r)|}{k^r}
A_r^\circ f(x).
\end{align*}
It follows from \eqref{eq-sphere-ball-growth} that
$\frac{|S(x,r)|}{k^r}\lesssim1$ for any $x\in T$ and $r\in\mathbb{Z}_+$.
Consequently,
$
|\mathscr T_Kf(x)|\lesssim\mathscr R_t f(x).
$
Using Proposition \ref{prop:spherical-summability}, we conclude that,
for any finitely supported function $f$ on $T$,
\begin{align*}
\left\|\mathscr T_K f\right\|_{L^p(w)}\lesssim
\left\|\mathscr R_t f\right\|_{L^p(w)}\lesssim\|f\|_{L^p(w)}.
\end{align*}
Since  the set of all finitely supported functions
is dense in $L^p(w)$, the operator $\mathscr T_K$ uniquely extends
to a bounded linear operator on $L^p(w)$.
This completes the proof of Theorem \ref{thm:decaying-kernel}.
\end{proof}

We conclude this subsection by establishing the pointwise
exponential decay for both ball and spherical averaging operators,
$A_r$ and $A_r^\circ$, defined in \eqref{eq-def-M} and \eqref{eq-def-Acirc}.

\begin{proposition}\label{prop:pointwise-decay-averages}
Let $p\in(\frac{1}{2},\infty)$, and let $w$ be a weight on $T$.
If $M$ is bounded on $L^p(w)$, then there exist
$\varepsilon\in(0,\infty)$ and a positive constant $C$
such that, for any $r\in\mathbb Z_+$,
$f\in L^p(w)$, and $x\in T$,
\begin{align}\label{eq:pointwise-decay-averages}
\left|A_r^\circ f(x)\right|+\left|A_r f(x)\right|
\le C\frac{1}{[w(x)]^{\frac{1}{p}}k^{\varepsilon r}}\|f\|_{L^p(w)},
\end{align}
where $C$ is independent of $r$, $f$, and $x$.
In particular, for any $f\in L^p(w)$ and $x\in T$,
\begin{align}\label{eq:pointwise-vanishing-averages}
\lim_{r\to\infty}A_r^\circ f(x)
=\lim_{r\to\infty}A_r f(x)=0.
\end{align}
\end{proposition}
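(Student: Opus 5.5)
The plan is to turn the norm estimate \eqref{eq-Ar-exponential} into a pointwise one by testing it at a single point. By Theorem \ref{thm-Bound-Ap}, since $M$ is bounded on $L^p(w)$, there exists $\varepsilon_0\in(0,\infty)$ such that
$$\|A_r^\circ f\|_{L^p(w)}\lesssim k^{-\varepsilon_0 r}\|f\|_{L^p(w)}$$
for any $r\in\mathbb Z_+$ and $f\in L^p(w)$. Fix $x\in T$. Because $A_r^\circ f\ge0$, we have
$$w(x)\left[A_r^\circ f(x)\right]^p\le\sum_{z\in T}w(z)\left[A_r^\circ f(z)\right]^p=\|A_r^\circ f\|_{L^p(w)}^p.$$
Combining the two gives
$$A_r^\circ f(x)\lesssim [w(x)]^{-\frac1p}k^{-\varepsilon_0 r}\|f\|_{L^p(w)}.$$
This is the spherical part of \eqref{eq:pointwise-decay-averages}, with a constant independent of $r$, $f$, and $x$.

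For the ball averages, write $B(x,r)$ as the disjoint union of the spheres $S(x,s)$, $s=0,\ldots,r$. Then
$$A_rf(x)=\frac1{|B(x,r)|}\sum_{s=0}^r|S(x,s)|\,A_s^\circ f(x).$$
By \eqref{eq-sphere-ball-growth}, $|S(x,s)|/|B(x,r)|\lesssim k^{s-r}$. Inserting the spherical bound yields
$$A_rf(x)\lesssim [w(x)]^{-\frac1p}\|f\|_{L^p(w)}\sum_{s=0}^r k^{s-r}k^{-\varepsilon_0 s}.$$
We may assume $\varepsilon_0<1$, since shrinking $\varepsilon_0$ keeps \eqref{eq-Ar-exponential} valid. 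The sum then equals
$$k^{-\varepsilon_0 r}\sum_{s=0}^r k^{-(1-\varepsilon_0)(r-s)}\lesssim k^{-\varepsilon_0 r}.$$
The implicit constant depends only on $\varepsilon_0$, because $k\ge2$ makes the geometric series bounded uniformly in $k$. Adding the two estimates gives \eqref{eq:pointwise-decay-averages} with $\varepsilon:=\varepsilon_0$.

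Finally, \eqref{eq:pointwise-vanishing-averages} follows by fixing $x$ and $f$ and letting $r\to\infty$: the right-hand side of \eqref{eq:pointwise-decay-averages} tends to zero because $w(x)>0$ and $\|f\|_{L^p(w)}<\infty$.

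The only step needing care is the ball-average step, namely the uniformity of the geometric sum, which is handled by taking $\varepsilon_0<1$. An alternative is to avoid that summation altogether: by \eqref{eq-sphere-ball-growth}, $|B(x,r)|\gtrsim k^r$ and $|S(x,s)|\lesssim k^s$, so one can bound $\sum_{s\le r}k^{s-r}A^\circ_s f(x)$ directly by the same computation. Everything else is immediate from Theorem \ref{thm-Bound-Ap}.
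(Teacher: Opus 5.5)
Your proposal is correct and follows essentially the same route as the paper: you evaluate the $L^p(w)$ norm at the single point $x$ to turn the exponential decay from Theorem \ref{thm-Bound-Ap} into the pointwise spherical bound, then split $B(x,r)$ into spheres, reduce to $\varepsilon<1$, and sum the resulting geometric series. Your closing ``alternative'' is the same computation, so you can drop it.
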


\begin{proof}
By Theorem \ref{thm-Bound-Ap}, we find that there exists
$\varepsilon\in(0,\infty)$ such that, for any
$r\in\mathbb Z_+$ and $f\in L^p(w)$,
\begin{align*}
\left\|A_r^\circ f\right\|_{L^p(w)}\lesssim
\frac{1}{k^{\varepsilon r}}\|f\|_{L^p(w)}.
\end{align*}
Since $w(x)>0$ for any $x\in T$, it follows that,
for any $r\in\mathbb Z_+$, $f\in L^p(w)$, and $x\in T$,
\begin{align}\label{eq:spherical-pointwise-decay}
\left|A_r^\circ f(x)\right|\leq\frac{1}{[w(x)]^{\frac{1}{p}}}
\left\|A_r^\circ f\right\|_{L^p(w)}\lesssim
\frac{1}{[w(x)]^{\frac{1}{p}}k^{\varepsilon r}}\|f\|_{L^p(w)}.
\end{align}
Using the assumption $k\geq 2$,
we may assume that $\varepsilon\in(0,1)$.
Observe that, for any $x\in T$ and $r\in\mathbb{Z}_+$,
$B(x,r)$ is the disjoint union of
$\{S(x,j)\}_{j\in[0,r]\cap\mathbb{Z}_+}$.
Using this observation
and \eqref{eq-sphere-ball-growth}, we conclude that,
for any $r\in\mathbb Z_+$, $f\in L^p(w)$, and $x\in T$,
\begin{align*}
\left|A_r f(x)\right|\leq\sum_{j=0}^{r}\frac{|S(x,j)|}{|B(x,r)|}
\left|A_j^\circ f(x)\right|\lesssim\sum_{j=0}^{r}
\frac{k^j}{k^r}\left|A_j^\circ f(x)\right|.
\end{align*}
From this and \eqref{eq:spherical-pointwise-decay}, we deduce that,
for any $r\in\mathbb Z_+$, $f\in L^p(w)$, and $x\in T$,
\begin{align*}
|A_r f(x)|&\lesssim
\frac{\|f\|_{L^p(w)}}{[w(x)]^{\frac{1}{p}}}
\sum_{j=0}^{r}\frac{k^{j(1-\varepsilon)}}{k^r}
\lesssim\frac{1}{[w(x)]^{\frac{1}{p}}k^{\varepsilon r}}\|f\|_{L^p(w)}.
\end{align*}
Combining this estimate with \eqref{eq:spherical-pointwise-decay} yields
\eqref{eq:pointwise-decay-averages}. It follows immediately
from \eqref{eq:pointwise-decay-averages}
that \eqref{eq:pointwise-vanishing-averages}
holds. This completes the proof of
Proposition \ref{prop:pointwise-decay-averages}.
\end{proof}

\begin{remark}
We note that the conclusion of Proposition \ref{prop:pointwise-decay-averages}
may fail if the boundedness assumption on $M$ is removed.
Indeed, for any $x\in T$, let $w(x):=k^{-2|x|}$.
By Corollary \ref{cor:exponential-radial},
we find that $M$ is not bounded on
$L^p(w)$ for any $p\in(\frac12,\infty)$ because
the exponent $-2$ appearing in $w$
does not belong to the interval
$(-\min\{p,1\},p-1)$. Let $f=\mathbf{1}_T$.
Observe that, for any $p\in(\frac{1}{2},\infty)$,
\begin{align*}
\|f\|_{L^p(w)}^p=\sum_{N\in\mathbb{Z}_+}\frac{1}{k^{2N}}|T_N|
=\sum_{N\in\mathbb{Z}_+}\frac{1}{k^{N}}<\infty,
\end{align*}
and hence $f\in L^p(w)$. However, for any $x\in T$ and
$r\in\mathbb Z_+$, $A_r^\circ f(x)=A_rf(x)=1$.
Consequently, neither $A_r^\circ f(x)$ nor $A_rf(x)$
converges to $0$ as $r\to\infty$.
\end{remark}

\subsection{Weighted Fefferman--Stein Vector-Valued Inequalities\label{5.2}}

In this subsection, we show that, for any $p\in(\frac{1}{2},\infty)$,
the boundedness of $M$ on $L^p(w)$ is equivalent to
weighted Fefferman--Stein vector-valued inequalities for $M$.
The main result of this subsection is as follows.
\begin{theorem}\label{thm:vector-valued}
Let $p\in(\frac{1}{2},\infty)$, and let $w$ be a weight on $T$.
Then the following three statements are mutually equivalent.
\begin{itemize}
\item[{\rm (i)}] $M$ is bounded on $L^p(w)$.

\item[{\rm (ii)}] There exists a positive constant $C$ such that,
for any sequence $\{f_j\}_{j\in\mathbb{N}}$ of functions on $T$,
\begin{align}\label{eq-vector-endpoint}
\left\|\sum_{j\in\mathbb{N}}Mf_j\right\|_{L^p(w)}
\leq C\left\|\sum_{j\in\mathbb{N}}|f_j|\right\|_{L^p(w)}.
\end{align}
\item[{\rm (iii)}] There exists a positive constant $C$ such that,
for any $q\in[1,\infty]$ and for any sequence
$\{f_j\}_{j\in\mathbb{N}}$ of functions on $T$,
\begin{align}\label{eq-vector-weighted}
\left\|\left[\sum_{j\in\mathbb N}
\left(Mf_j\right)^q\right]^{\frac{1}{q}}\right\|_{L^p(w)}
\leq C\left\|\left(\sum_{j\in\mathbb N}
|f_j|^q\right)^{\frac{1}{q}}\right\|_{L^p(w)}
\end{align}
with the usual modification by the supremum when $q=\infty$.
\end{itemize}
\end{theorem}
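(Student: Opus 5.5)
The plan is to prove the cycle (i) $\Longrightarrow$ (iii) $\Longrightarrow$ (ii) $\Longrightarrow$ (i). Two parts are immediate. For (ii) $\Longrightarrow$ (i), take $f_1:=f$ and $f_j:=0$ for $j\ge2$; since $M0=0$, \eqref{eq-vector-endpoint} becomes $\|Mf\|_{L^p(w)}\le C\|f\|_{L^p(w)}$. For (iii) $\Longrightarrow$ (ii), take $q=1$ in \eqref{eq-vector-weighted}; the two sides are exactly those of \eqref{eq-vector-endpoint}, since $Mf_j\ge0$.

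The real content is (i) $\Longrightarrow$ (iii), which should hold uniformly in $q\in[1,\infty]$. By \eqref{eq-equivalent-M} and \eqref{eq-M-supAr},
\[
Mf_j(x)\le M^\circ f_j(x)\le\sum_{r\in\mathbb Z_+}A_r^\circ f_j(x).
\]
Since $q\ge1$, Minkowski's inequality in $\ell^q$ gives, pointwise,
\[
\Big[\sum_j(Mf_j)^q\Big]^{\frac1q}\le\sum_{r\in\mathbb Z_+}\Big[\sum_j(A_r^\circ f_j)^q\Big]^{\frac1q}.
\]
For each fixed $r$, the operator $A_r^\circ$ is a positive averaging operator. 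Minkowski in $\ell^q$ again (a sum of vectors in $\ell^q$) yields
\[
\Big[\sum_j(A_r^\circ f_j)^q\Big]^{\frac1q}\le A_r^\circ F,\qquad F:=\Big(\sum_j|f_j|^q\Big)^{\frac1q},
\]
and the case $q=\infty$ is trivial. Therefore
\[
\Big[\sum_j(Mf_j)^q\Big]^{\frac1q}\le\sum_{r\in\mathbb Z_+}A_r^\circ F=\mathscr R F,
\]
with the same pointwise bound for all $q$.

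Next apply Proposition \ref{prop:spherical-summability} with $t=0$. Under (i), it shows that $\mathscr R$ is bounded on $L^p(w)$ for $p\in(\frac12,\infty)$; for $p<1$ its proof uses \eqref{eq-embeding-l1} together with the exponential decay \eqref{eq-Ar-exponential} from Theorem \ref{thm-Bound-Ap}. Hence
\[
\Big\|\Big[\sum_j(Mf_j)^q\Big]^{\frac1q}\Big\|_{L^p(w)}\le\|\mathscr RF\|_{L^p(w)}\lesssim\|F\|_{L^p(w)},
\]
where the constant is the operator norm of $\mathscr R$ and so does not depend on $q$. This proves (iii).

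The step needing most care is the pointwise domination of the $\ell^q$-valued maximal function by $\mathscr R F$. The supremum over $r$ in $M^\circ$ must be replaced by a sum over $r$ before interchanging with the $\ell^q$ norm; the bound $M^\circ f\le\sum_r A_r^\circ f$ does exactly this, at no cost. The second point to check is that this works for $p\le1$, where Minkowski in $L^p(w)$ fails. Because the reduction is pointwise and only the scalar boundedness of $\mathscr R$ is used, the case $p\in(\frac12,1]$ is covered by Proposition \ref{prop:spherical-summability}.
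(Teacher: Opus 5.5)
Your proposal is correct and follows essentially the same route as the paper: you get the trivial implications by taking $q=1$ and a sequence with a single nonzero function, and you prove (i) $\Longrightarrow$ (iii) through the pointwise bound $\bigl[\sum_j (Mf_j)^q\bigr]^{1/q}\le \mathscr R F$, obtained from two applications of Minkowski's inequality, followed by the boundedness of $\mathscr R$ from Proposition \ref{prop:spherical-summability}. (The inequality $M^\circ f\le\sum_r A_r^\circ f$ you use is simply ``supremum $\le$ sum of nonnegative terms'' rather than literally \eqref{eq-M-supAr}, but this does not affect the argument.)
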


Before proving Theorem \ref{thm:vector-valued},
we compare it with some known results.

\begin{remark}
\begin{itemize}
\item[{\rm (i)}] If $w\equiv 1$, by
Corollary \ref{cor:exponential-radial}(i) with $\beta=0$,
we find that $M$ is bounded on $L^p(T)$ if and only if
$p\in(1,\infty)$. Recall that, in \cite[Theorem 4.2]{ors21},
it was proved that, for any $1<q\leq p<\infty$ and for any
sequence $\{f_j\}_{j\in\mathbb{N}}$ of functions on $T$,
\begin{align*}
\left\|\left[\sum_{j\in\mathbb N}
\left(Mf_j\right)^q\right]^{\frac{1}{q}}\right\|_{L^p(T)}
\lesssim\left\|\left(\sum_{j\in\mathbb N}
|f_j|^q\right)^{\frac{1}{q}}\right\|_{L^p(T)}.
\end{align*}
Theorem \ref{thm:vector-valued} extends this estimate
to any weight $w$ such that $M$ is bounded on $L^p(w)$,
covers the whole range $q\in[1,\infty]$, and gives a constant independent of $q$.
The argument used in the proof of Theorem \ref{thm:vector-valued}
is different from that used in the proof of \cite[Theorem 4.2]{ors21}.
Instead of using the Fefferman--Stein inequality,
we use the fact that the boundedness of $M$ on
$L^p(w)$ guarantees the exponential decay boundedness of
spherical averaging operators $\{A^\circ_r\}_{r\in\mathbb{Z}_+}$ on $L^p(w)$
(see Theorem \ref{thm-Bound-Ap}).

\item[{\rm (ii)}] In $\mathbb{R}^n$, when $p,q\in(1,\infty)$,
the unweighted version of \eqref{eq-vector-weighted} is well known as the
Fefferman--Stein vector-valued inequality, which was
originally studied in \cite{fs71}. It is worth pointing out that,
in $\mathbb{R}^n$, the unweighted analogue of \eqref{eq-vector-weighted} fails
for $q=1$ (see \cite[Chapter II, Section 5.1]{ste93}
for a concrete counterexample).
\end{itemize}
\end{remark}

We now prove Theorem \ref{thm:vector-valued}.
\begin{proof}
The implication (iii) $\Longrightarrow$ (ii) is obvious; we omit the details.
Applying  \eqref{eq-vector-endpoint} to a sequence containing only one
non-zero function, we obtain the implication (ii) $\Longrightarrow$ (i).
It remains to prove the implication (i) $\Longrightarrow$ (iii).
Suppose that $M$ is bounded on $L^p(w)$.
By Proposition \ref{prop:spherical-summability}, we find that
the operator $\mathscr{R}$ is also bounded on $L^p(w)$.
We now show \eqref{eq-vector-weighted} by considering two
cases for the range of $q$.

\emph{Case (1)} $q\in[1,\infty)$. In this case,
using the pointwise equivalence
between $M$ and $M^\circ$ and using Minkowski's inequality,
we conclude that, for any sequence $\{f_j\}_{j\in\mathbb{N}}$
of functions on $T$ and for any $x\in T$,
\begin{align*}
\left(\sum_{j\in\mathbb N}
\left[Mf_j(x)\right]^q\right)^{\frac{1}{q}}
\lesssim\left\{\sum_{j\in\mathbb N}
\left[\sum_{r\in\mathbb Z_+}A_r^\circ f_j(x)
\right]^q\right\}^{\frac{1}{q}}
\leq\sum_{r\in\mathbb Z_+}\left\{
\sum_{j\in\mathbb N}\left[A_r^\circ f_j(x)\right]^q
\right\}^{\frac{1}{q}}.
\end{align*}
For any fixed $r\in\mathbb{Z}_+$, Minkowski's inequality
for the average over $S(x,r)$ yields
$$
\left\{\sum_{j\in\mathbb N}\left[A_r^\circ f_j(x)\right]^q
\right\}^{\frac{1}{q}}\leq A_r^\circ
\left(\left(\sum_{j\in\mathbb N}
|f_j|^q\right)^{\frac{1}{q}}\right)(x).
$$
Consequently,
\begin{align*}
\left(\sum_{j\in\mathbb N}
\left[Mf_j(x)\right]^q\right)^{\frac{1}{q}}
\lesssim\sum_{r\in\mathbb Z_+} A_r^\circ
\left(\left(\sum_{j\in\mathbb N}
|f_j|^q\right)^{\frac{1}{q}}\right)(x)
=\mathscr{R}\left(\left(\sum_{j\in\mathbb N}
|f_j|^q\right)^{\frac{1}{q}}\right)(x).
\end{align*}
Taking the $L^p(w)$ norm and applying the
boundedness of $\mathscr{R}$, we further obtain
$$
\left\|\left[\sum_{j\in\mathbb N}
\left(Mf_j\right)^q\right]^{\frac{1}{q}}\right\|_{L^p(w)}
\lesssim\left\|\left(\sum_{j\in\mathbb N}
|f_j|^q\right)^{\frac{1}{q}}\right\|_{L^p(w)}.
$$
Note that all the implicit positive constants are independent of $q$.
Thus, \eqref{eq-vector-weighted} holds in this case.

\emph{Case (2)} $q=\infty$. In this case,
for any sequence $\{f_j\}_{j\in\mathbb{N}}$
of functions on $T$ and for any $x\in T$,
\begin{align*}
\sup_{j\in\mathbb N}Mf_j(x)&\lesssim
\sup_{j\in\mathbb N}\sum_{r\in\mathbb Z_+}A_r^\circ f_j(x)
\leq\sum_{r\in\mathbb Z_+}
\sup_{j\in\mathbb N}A_r^\circ f_j(x)\\
&\leq\sum_{r\in\mathbb Z_+}
A_r^\circ\left(\sup_{j\in\mathbb N}\left|f_j\right|\right)(x)
=\mathscr{R}\left(\sup_{j\in\mathbb N}\left|f_j\right|\right)(x).
\end{align*}
From the boundedness of $\mathscr{R}$, we deduce that
\eqref{eq-vector-weighted} holds in this case.
This completes the proof of the implication
(i) $\Longrightarrow$ (iii), and hence Theorem \ref{thm:vector-valued}.
\end{proof}

%%%%%%%%%%%%%%%%%%%%%%%%%%%%%%%%%%%%%%%

\smallskip
\noindent\textbf{Acknowledgements}\quad
The authors acknowledge the use of AI tools during the exploratory stage of this project.
All mathematical arguments and proofs in the final manuscript were checked
and written by the authors.

\smallskip
\noindent\textbf{Data availability}\quad
Data sharing is not applicable to this article as no data sets were generated or analysed.

\section*{Declarations}

\noindent\textbf{Conflict of interest}\quad
The authors state that there is no conflict of interest.

\bigskip

\noindent Dachun Yang, Wen Yuan and Mingdong Zhang.

\medskip

\noindent Laboratory of Mathematics and Complex Systems
(Ministry of Education of China),
School of Mathematical Sciences,
Institute for Advanced Study, Beijing Normal University,
Beijing 100875, The People's Republic of China

\smallskip

\noindent{{\it E-mails:}}
\texttt{dcyang@bnu.edu.cn} (D. Yang)

\noindent\phantom{{\it E-mails:}}
\texttt{wenyuan@bnu.edu.cn} (W. Yuan)

\noindent\phantom{\it E-mails:}
\texttt{mdzhang@mail.bnu.edu.cn} (M. Zhang)
\end{document}